\title[Traces of Sobolev maps]{Quantitative characterization of traces of Sobolev maps}
\author{Katarzyna Mazowiecka}
\address[Katarzyna Mazowiecka]{
Lehrstuhl f\"{u}r Angewandte Analysis, RWTH Aachen University, Pontdriesch 14-16, 52062 Aachen, Germany}
\email{mazowiecka@math1.rwth-aachen.de}
\author{Jean Van Schaftingen}
\address[Jean Van Schaftingen]{Universit\'e catholique de Louvain, Institut de Recherche en Math\'ematique et Physique, Chemin du Cyclotron 2 bte L7.01.02, 1348 Louvain-la-Neuve, Belgium}
\email{Jean.VanSchaftingen@uclouvain.be}
\subjclass[2010]{58D15 (46E35, 46T10, 46T20, 55S35)}
\newcommand\eps{\varepsilon}
\newcommand\B{{\mathbb B}}
\newcommand\M{{\mathcal M}}
\newcommand\N{{\mathbb N}}
\DeclarePairedDelimiter{\brk}{(}{)}
\DeclarePairedDelimiter{\abs}{\lvert}{\rvert}
\DeclarePairedDelimiter{\set}{\lbrace}{\rbrace}
\DeclarePairedDelimiter{\norm}{\lVert}{\rVert}
\DeclarePairedDelimiter{\seminorm}{\lvert}{\rvert}
\DeclarePairedDelimiter{\floor}{\lfloor}{\rfloor}
\newcommand{\vast}{\bBigg@{3}}
\newcommand{\Vast}{\bBigg@{4}}
\newcommand{\deriv}{D}
\newcommand*\oline[1]{%
  \kern0.1em            
  \vbox{%
    \hrule height 0.5pt 
    \kern0.4ex          
    \hbox{%
      \kern-0.1em       
      $#1$
      \kern-0.1em       
    }
  }
  \kern0.1em            
}
\newtheorem{theorem}{Theorem}
\newtheorem{lemma}[theorem]{Lemma}
\newtheorem{proposition}[theorem]{Proposition}
\theoremstyle{definition}
\theoremstyle{remark}
\newtheorem{remark}[theorem]{Remark}
\newcommand\dist{\mathrm{dist\,}}
\newcommand\lip{\mathrm{Lip\,}}
\newcommand{\R}{\mathbb{R}}
\newcommand{\Z}{\mathbb{Z}}
\newcommand{\brac}[1]{\left (#1 \right )}
\newcommand{\tnorm}[1]{{\left\vert\kern-0.25ex\left\vert\kern-0.25ex\left\vert #1 \right\vert\kern-0.25ex\right\vert\kern-0.25ex\right\vert}} 
\newcommand{\n}{\mathcal{N}}
\newcommand{\m}{\mathcal{M}}
\newcommand{\compose}{\,\circ\,}
\newcommand{\defeq}{\coloneqq} 
\newcommand{\dif}{\,\mathrm{d}}
\newcommand{\Barcal}{\mbox{$ \rule[.036in]{.11in}{.007in}\kern-.128in\int $}}
\def\mvint_#1{\mathchoice
          {\mathop{\vrule width 6pt height 3 pt depth -2.5pt
                  \kern -8pt \intop}\nolimits_{\kern -3pt #1}}%

          {\mathop{\vrule width 5pt height 3 pt depth -2.6pt
                  \kern -6pt \intop}\nolimits_{#1}}%
          {\mathop{\vrule width 5pt height 3 pt depth -2.6pt
                  \kern -6pt \intop}\nolimits_{#1}}%
          {\mathop{\vrule width 5pt height 3 pt depth -2.6pt
                  \kern -6pt \intop}\nolimits_{#1}}}
\numberwithin{theorem}{section} \numberwithin{equation}{section}
\newcommand{\tr}{\operatorname{tr}}
\definecolor{ttzzqq}{rgb}{0.2,0.6,0}
\definecolor{ffqqqq}{rgb}{1,0,0}
\definecolor{gr}{rgb}{0.2,0.6,0}
\definecolor{re}{rgb}{1,0,0}
\definecolor{bl}{rgb}{0,0,0.8}
\tikzset{>=stealth}
\begin{document}

\begin{abstract}
We give a quantitative characterization of traces on the boundary of Sobolev maps in $\dot{W}^{1,p}(\mathcal M, \mathcal N)$, where $\mathcal{M}$ and $\mathcal{N}$ are compact Riemannian manifolds, $\partial \m \neq \emptyset$: the Borel--measurable maps $u\colon \partial \mathcal M \to \mathcal{N}$ that are the trace of a map $U\in \dot{W}^{1,p}(\mathcal M, \mathcal{N})$ are characterized as the maps for which 
there exists an extension energy density $w \colon \partial \mathcal{M} \to [0,\infty]$ that controls the Sobolev energy of extensions from $\lfloor p - 1 \rfloor$--dimensional subsets of $\partial \mathcal{M}$ to $\lfloor p\rfloor$--dimensional subsets of $\mathcal{M}$.
\end{abstract}
\maketitle

\section{Introduction}
Given \(\m\) a compact Riemannian manifold with non-empty boundary \(\partial \m\), we consider the homogeneous Sobolev space defined as
\[
 \dot{W}^{1, p} (\m, \R)
 \defeq 
 \biggl\{U \colon \m \to \R \colon U \text{ is weakly differentiable and } \deriv U \in L^p(\m)\biggr\}.
\]
The classical trace theorem of E.~Gagliardo \cites{Gagliardo} states that for \(p > 1\) there is a well-defined continuous and surjective trace operator 
\[
  \tr_{\partial \m} 
  \colon 
  \dot{W}^{1, p} (\m, \R) 
  \to 
  \dot{W}^{1-1/p, p} (\partial \m, \R), 
\]
such that for functions $U$ that are additionally continuous we have \(\tr_{\partial \m} U = U \big\vert_{\partial \m}\).
Here, for $0<s<1$ and $p\ge 1$, $\dot{W}^{s,p}(\partial \m, \R)$ is the homogeneous Sobolev--Slobodeckij space, or fractional Sobolev space, defined as 
\begin{equation}
\label{eq_gae7ieCheexiwoushaibail6}
  \dot{W}^{s, p} (\partial \m, \R)
 \defeq
 \set[\bigg]{u \colon \m \to \R \colon \int_{\partial \m}\int_{\partial \m} \frac{\abs{u (x) - u (y)}^p}{d_{\partial \m}(x,y)^{m - 1 + sp}} \dif x \dif y < \infty}, 
\end{equation}
where \(m \defeq \dim \m = \dim \partial \m + 1\) and $d_{\partial \m}$ is the geodesic distance on $\partial \m$.

If \(\n\) is a compact Riemannian manifold, that by J.~Nash's embedding theorem \cite{Nash} can be assumed without loss of generality to be isometrically embedded into some Euclidean space $\R^\nu \supseteq \n$, then the homogeneous spaces of Sobolev mappings can be defined for \(p \ge 1\) as 
\[
 \dot{W}^{1,p}(\m,\n) \defeq \set[\big]{U\in \dot{W}^{1,p}(\m,\R^\nu)\colon U(x) \in \n \text{ for almost every } x\in \m}
\]
and for \(0 < s < 1\) and \(p \ge 1\)
\[
  \dot{W}^{s, p} (\partial \m,\n) 
  \defeq 
  \set{u\in \dot{W}^{s, p} (\partial \m,\R^\nu)\colon u(x) \in \n \text{ for almost every } x\in \partial \m};
\]
these nonlinear Sobolev spaces arise naturally, for example, as domains of functionals in the calculus of variations and of partial differential equations in geometric analysis and physical models.

As a consequence of the straightforward vector version of Gagliardo's trace theorem, the trace operator \(\tr_{\partial \m}\) is well-defined and continuous from \(\dot{W}^{1, p} (\m, \n)\) to \(\dot{W}^{1-1/p, p} (\partial \m, \n)\). 
The question of surjectivity of the trace operator is however much more delicate: given a map \(u \in \dot{W}^{1-1/p, p} (\partial \m, \n)\), the classical linear extension  construction gives a function \(U \in \dot{W}^{1, p} (\partial \m, \R^\nu)\) such that \(\tr_{\partial \m} U = u\) with no guarantee whatsoever about the range of the extension \(U\).

Indeed, the surjectivity of the trace operator can first fail because of \emph{global topological obstructions}: For instance if \(p > m\), by the Morrey--Sobolev embedding, mappings in the spaces \(\dot{W}^{1, p} (\m, \n)\) and \(\dot{W}^{1 - 1/p, p}(\partial \m, \n)\) are almost everywhere equal to continuous maps, and classical topological obstructions for the extension of continuous maps results in obstructions for the extension of Sobolev mappings.
When \(1 \le p < m\), a \emph{Lipschitz--continuous} map $u\in \lip(\partial \m, \n)$ is known to be a trace of a map in \(\dot{W}^{1, p} (\m, \n)\) if and only if $u$ has a continuous extension to $\partial \m \cap \m^{\floor{p}}$, where $\m^{\floor{p}}$ is a $\lfloor p\rfloor$--dimensional skeleton of $\m$ \cite{White-homotopy}*{Section 4}; here and in the sequel \(\floor{t}\) denotes the integer part of the real number \(t\), so that \(\floor{t} \in \Z\) and \(\floor{t} \le t < \floor{t} + 1\).

\emph{Local topological obstructions} can prevent locally the surjectivity of the trace operator: If \(p < m\) and if the homotopy group \(\pi_{\floor{p - 1}} (\n)\) is not trivial, by definition there exists a map \(f \in C^\infty (\mathbb{S}^{\floor{p - 1}}, \n)\) which is not homotopic to a constant; define the mapping \(u \colon \mathbb{B}^{m - 1} \to \n\) for \(x = (x', x'') \in \mathbb{B}^{m - 1} \subset \R^{\floor{p}} \times \R^{m - \floor{p + 1}}\) by \(u (x) \defeq f (x'/\abs{x'})\); we have \(u \in \dot{W}^{1-1/p, p} (\partial \m, \n)\), whereas there is no \(U \in \dot{W}^{1, p} (\mathbb{B}^{m - 1} \times (0, 1), \n)\) such that \(\tr_{\mathbb{B}^{m - 1}} U = u\) \citelist{\cite{HL1987}*{Section 6.3}\cite{Bethuel-Demengel}*{Theorem 4}}.

\emph{Analytical obstructions} finally arise locally for the extension problem:
There exist maps in \(\dot{W}^{1 - 1/p, p} (\B^{m - 1}, \n)\) that are strong limits of smooth maps from \(\B^{m - 1}\) to \(\n\) but are not traces of maps in \(\dot{W}^{1, p} (\B^{m - 1}\times (0, 1), \n)\). This is known to happen when either the homotopy group \(\pi_{\ell} (\n)\) is infinite for some \(\ell \in \N\) with \(\ell \le \max\{m, p\} - 1\) \cite{Bethuel-obstruction} (see also \cite{Bethuel-Demengel}*{Theorem 6}) and when \(p \in \N \setminus \{0, 1\}\) and the homotopy group \(\pi_{p-1} (\n)\) is nontrivial \cite{Mironescu_VanSchaftingen_trace}. 
These analytical obstructions can be seen in view of a nonlinear uniform boundedness principle as a consequence of the failure of linear estimates on extensions for smooth maps \cite{Monteil_VanSchaftingen}; when \(2 \le p < 3\), these analytical obstructions are connected to similar analytical obstructions for the lifting problem in fractional Sobolev spaces \citelist{\cite{Mironescu_VanSchaftingen_Lifting}\cite{Bethuel_Chiron}}.

On the other hand, the trace is known to be surjective from \(\dot{W}^{1, p} (\B^{m - 1} \times (0, 1),\n)\) onto \(\dot{W}^{1-1/p, p}(\B^{m-1},\n)\) in the following cases: when \(p \ge m\) \cite{Bethuel-Demengel}*{Theorem 1 \& 2}, for \(p > m\) it is a consequence of the Morrey--Sobolev embedding whereas for \(p = m\) it is a consequence of the embedding into maps of vanishing mean oscillation (VMO) \cites{Brezis_Nirenberg_1, Brezis_Nirenberg_2}; when \(1 < p < 2\le m\) or \(2 \le p < 3\le m\) with $\pi_1(\n)\simeq \{0\}$ \cite{HL1987}*{Theorem 6.2}; when \(3 \le p < m\), \(\pi_1 (\n)\) is finite, and \(\pi_2(\n) \simeq \dotsb \simeq \pi_{\floor{p -1}}(\n) \simeq \{0\}\) \citelist{\cite{HL1987}*{Theorem 6.2}\cite{Mironescu_VanSchaftingen_trace}}. The case when \(4 \le p < m\), \(\pi_1 (\n), \dotsc, \pi_{\floor{p - 2}} (\n)\) are finite, and \(\pi_{\floor{p - 1}}(\n)\simeq \{0\}\) remains open.

We are interested in the question of characterizing in general the range of the trace operator.
T.~Isobe \cite{Isobe} has provided characterization of the maps $u\colon \partial \m \to \n$ that are the traces of maps in \(\dot{W}^{1, p}(\mathcal M,\n)\) as the maps satisfying the two conditions: 
\begin{enumerate}[label=(\(\mathfrak{o}_\Alph*\))]
 \item 
 \label{it_zuugh7Thai5ThiephahtoP8y}
 The mapping \(u\) satisfies
 \[
  \varlimsup_{\substack{t \to 0}}\varlimsup_{\varepsilon \to 0}
  \inf\set*{\smashoperator[r]{\int_{[0,t)\times \partial \mathcal M}}\abs{\deriv U}^p + \frac{\operatorname{dist} (U,\mathcal N)^p}{\eps^p}  \colon U\in \dot{W}^{1,p}([0,t)\times\partial \mathcal M, \R^\nu) \text{ and } \tr_{\partial \mathcal M} U =u} = 0.
 \]
 \item 
 \label{it_dahGaiquu0xejai3Tah8aipa} The restriction of the mapping \(u\) to a generic triangulation $\mathcal M^{\lfloor p-1 \rfloor}\cap \partial \mathcal M$ is homotopic in \(\mathrm{VMO}(\mathcal M^{\lfloor p-1 \rfloor}\cap \partial \mathcal M,\n)\) to the restriction of a continuous function from $\mathcal M^{\floor{p}}$ to \(\n\), where $\mathcal M^\ell$ with \(\ell \in \{0, \dotsc, m\}\) stands for the $\ell$--dimensional skeleton of a triangulation of $\mathcal M$.
\end{enumerate}
Isobe's first obstruction \ref{it_zuugh7Thai5ThiephahtoP8y} is equivalent to \(u\) belonging to the image of the trace operator on \(\dot{W}^{1, p} (\partial \m \times [0, 1], \n)\),
implying in particular through classical linear trace theory that \(u \in W^{1-1/p, p} (\partial \m, \n)\); 
this condition \ref{it_zuugh7Thai5ThiephahtoP8y}  can be seen as an asymptotic condition on a family of Ginzburg--Landau functionals; according to Isobe the problem of characterizing in general maps satisfying \ref{it_zuugh7Thai5ThiephahtoP8y} remains open \cite{Isobe}*{p.\ 367}. 
In \ref{it_dahGaiquu0xejai3Tah8aipa}, the notion of generic skeleton has to be understood in the sense of holding for almost every value of the parameter for a parametrized family of triangulations (see \citelist{\cite{Hang_Lin_II}*{Section 3}\cite{White-homotopy}*{Section 3}\cite{White-infima}*{Section 3}}); the homotopy in \(\mathrm{VMO}(\mathcal M^{\lfloor p-1 \rfloor}\cap \partial \mathcal M,\n)\) is understood in the sense of \cite{Brezis_Nirenberg_1} (see also \cref{section_qualitative}). When \(p \not \in \N\), \ref{it_dahGaiquu0xejai3Tah8aipa} can be simplified into requiring that restrictions of the mapping \(u\) to generic triangulations $\mathcal M^{\lfloor p-1 \rfloor}\cap \partial \mathcal M$ are equal almost everywhere to restrictions of continuous mappings from $\mathcal M^{\floor{p}}$ to \(\n\).

The goal of the present work is to characterize the image of the trace by the properties of mappings on lower-dimensional subsets. This approach is motivated by the fact that in the Gagliardo energy appearing in the definition \eqref{eq_gae7ieCheexiwoushaibail6}
of the fractional Sobolev space \(\dot{W}^{1-1/p, p} (\partial \m, \n)\), the quotient 
\[
\frac{\abs{u (x) - u (y)}^p}{d_{\partial \m}(x, y)^{p - 1}}
\]
can be interpreted as the minimal energy in \(\dot{W}^{1, p} ([0, d_{\partial \m}(x, y)], \R)\) to connect \(u (x)\) to \(u (y)\). 
Because of the quantitative nature of the phenomenon of analytical obstructions, we expect any characterization of the trace space to have some quantitative character. Finally, a workable characterization should be based on a robust definition of generic lower-dimensional set, as developed as topological screening by P.~Bousquet, A.~Ponce, and J.~Van Schaftingen \cite{BPVS}.

We first consider the case where the domain manifold \(\m\) is the \(m\)--dimensional half-space \(\R^m_+ \defeq \R^{m - 1} \times (0, \infty)\) with boundary \(\partial \R^m_+ = \R^{m - 1} \times \{0\}\) and closure \(\oline{\R}^m_+ \defeq \R^{m - 1} \times [0, \infty)\). In order to formulate our results we settle some terminology and notation.

We assume that simplexes of a simplicial complex inherit the metric and the measure from their canonical realization as an equilateral simplex of side-length~\(1\); on the full complex \(\Sigma\), a measure is defined by additivity and a distance \(d^\Sigma\).  Given simplicial complexes $\Sigma$ and $\Sigma_0$ and \(\lambda > 0\), we define the quantity
\begin{equation}\label{eq:Alhforslike}
 \gamma^{\lambda}_{\Sigma_0, \Sigma}
 \defeq \sup_{\substack{z \in \Sigma_0 \\ \delta > 0}} \frac{\abs{B^\Sigma_{\lambda \delta} (z)} }{\delta \abs{\Sigma_0 \cap B^\Sigma_{\delta} (z)}},
\end{equation}
with the measure in the numerator being taken relatively to \(\Sigma\) and in the denominator relatively to \(\Sigma_0\); we note that \(\gamma^{\lambda}_{\Sigma_0, \Sigma} <\infty\) for every \(\lambda > 0\) whenever \(\Sigma\) is a finite homogeneous simplicial complex and if \(\Sigma_0 \subset \Sigma\) is a homogeneous simplicial complex of codimension \(1\); 
\eqref{eq:Alhforslike} is reminiscent of an \emph{Alhfors upper codimension-\(1\) bound} (see \cites{Maly2017,Saksman_Soto_2017,Maly_Shanmugalingam_Snipes_2018}, where a doubling condition is made separately). The quantity \(\seminorm{\sigma}_{\mathrm{Lip}}\) denotes the Lipschitz constant of the map \(\sigma\colon \Sigma \to \R^m_+\)
\[
 \seminorm{\sigma}_{\mathrm{Lip}} \defeq 
 \sup_{x, y \in \Sigma} \frac{\abs{\sigma (x) - \sigma (y)}} {d_{\Sigma} (x, y)}.
\]
We define the \emph{canonical cubication of the half-space} \(\R^m_+\) of size \(\kappa\) as follows. 
For \(\ell \in \{0, \dotsc, m\}\) and \(\kappa > 0\), we write 
\begin{equation}\label{eq:cubicationdefintion1}
 \mathcal{Q}^{\kappa, \ell}
 \defeq \set*{Q \subset \R^m \colon Q \text{ is a closed $\ell$--dimensional face of } \left[-\frac{\kappa}{2}, \frac{\kappa}{2}\right]^m + \kappa k \text{ with } k \in \Z^m}
\end{equation}
and we let then 
\begin{align}\label{eq:cubicationdefintion2}
 \mathcal{Q}^{\kappa, \ell}_+ &\defeq \set[\big]{Q \cap \oline{\R}^{m}_+ \colon Q \in \mathcal Q^{\kappa, \ell} },
 &
\mathcal Q^{\kappa, \ell}_0 & \defeq \set[\big]{Q \cap \partial \R^{m}_+ \colon Q \in \mathcal Q^{\kappa, \ell + 1} }
\end{align}
(with \(\ell \in \{0, \dotsc, m- 1\}\) in the definition of \(\mathcal Q^{\kappa, \ell}_0\); since the faces of cubes of \(\mathcal{Q}^{\kappa, \ell}\) all intersect transversally \(\partial \R^m_+\), the cubes in \(\mathcal{Q}^{\kappa, \ell}_0\) are \(\ell\)--dimensional) and 
\begin{align}\label{eq:cubicationdefintion3}
 \mathcal{C}^{\kappa, \ell}_+ &= \bigcup \mathcal Q^{\kappa, \ell}_+, &
\quad \mathcal{C}^{\kappa, \ell}_0 &= \bigcup \mathcal Q^{\kappa, \ell}_0.
\end{align}

We will state our results for a mapping \(u\) from \(\partial \R^{m}_+\) \emph{defined everywhere} following \citelist{\cite{White-homotopy}*{p.\ 5}\cite{Hang_Lin_II}*{p.\ 66}}. In other words, we do not consider equivalence classes of functions equal almost everywhere. (Otherwise, given any \(\sigma\) or any \(h\), there exists a map equal almost everywhere that satisfies  \ref{it_aithesh6rea9ouGaib0Dae2f} or \ref{it_ap6kahs3enainah2lohz9ET2} in \Cref{theorem_main_halfspace} by being constant on the set \(\sigma(\Sigma_0)\) or on \(\mathcal{C}^{\kappa_i, \floor{p - 1}}_0 + h \).)

We obtain the following characterization of traces.
\begin{theorem}
\label{theorem_main_halfspace}
Assume that  $2 \le m\in\N$, the map $u\colon \partial \R^{m}_+ \to \n$ is Borel--measurable, and \(\lambda > 1\). 
If \(1 < p < m\), then the following statements are equivalent:
\begin{enumerate}[label=(\roman*)]
 \item \label{it_eiquoongeiJ8quahb5yu7hoo}
 There exists \(U \in \dot{W}^{1, p} (\R^{m}_+, \n)\) such that $u = \tr_{\partial \R^{m}_+} U$.
 \item \label{it_aithesh6rea9ouGaib0Dae2f} There exists a summable Borel--measurable function \(w \colon \partial \R^{m}_+ \to [0,\infty]\)
such that for every finite homogeneous simplicial complex $\Sigma$ of dimension \(\floor{p}\) and every subcomplex $\Sigma_0\subset \Sigma$ of dimension \(\floor{p - 1}\), and every Lipschitz--continuous map $\sigma\colon \Sigma \to \R^{m}_+$ which satisfies \(\sigma (\Sigma_0) \subset \partial \R^m_+\) and for which \( w \compose \sigma\) is summable, 
there exists a mapping $V \in \dot{W}^{1, p} (\Sigma, \n)$
 such that $\tr_{\Sigma_0} V  =  u \compose \sigma \big\vert_{\Sigma_0}$ and
 \[
  \int_{\Sigma} |\deriv V|^p \leq
  \gamma^\lambda_{\Sigma_0, \Sigma} \,\seminorm{\sigma}_{\lip}^{p - 1}
  \int_{\Sigma_0} w\compose \sigma.
 \]
  \item 
  \label{it_ap6kahs3enainah2lohz9ET2}
 There exist a constant \(\theta > 0\), a sequence \((\kappa_i)_{i \in \N}\) in \((0, \infty)\) converging to \(0\), and sets \(H_i \subset [0, \kappa_i]^{m - 1}\), such that 
 \[
  \liminf_{i \to \infty} \frac{\abs{H_i}}{\kappa_i^{m - 1}} > 0 
 \]
and if \(h \in H_i \times \{0\}\), then there exists a map \(V \in \dot{W}^{1, p} (\mathcal{C}^{\kappa_i, \floor{p}}_+ + h, \n)\) such that \(\tr V = u \big\vert_{\mathcal{C}^{\kappa_i, \floor{p - 1}}_0 + h}\) and 
 \[
 \kappa_i^{m - \floor{p}}
   \smashoperator{\int_{\mathcal{C}^{\kappa_i, \floor{p}}_+ + h}} \abs{\deriv V}^p \le  \theta.
 \]
\end{enumerate}
\end{theorem}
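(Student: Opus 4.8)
The plan is to prove the three statements equivalent through the cyclic chain (i) $\Rightarrow$ (ii) $\Rightarrow$ (iii) $\Rightarrow$ (i). The link (ii) $\Rightarrow$ (iii) is the elementary one. Fix a scale $\kappa>0$, triangulate $\mathcal{C}^{\kappa,\floor{p}}_+$ into a simplicial complex $\Sigma$ in which $\mathcal{C}^{\kappa,\floor{p}-1}_0$ becomes a subcomplex $\Sigma_0$ of dimension $\floor{p-1}=\floor{p}-1$, and take for $\sigma\colon\Sigma\to\R^m_+$ the geometric realization composed with a translation by $h\in[0,\kappa]^{m-1}\times\{0\}$, so that $\sigma(\Sigma_0)\subset\partial\R^m_+$. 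Here $\gamma^\lambda_{\Sigma_0,\Sigma}$ depends only on $m$ and $\lambda$ (the intrinsic complex is the same for every $\kappa$), while $\seminorm{\sigma}_{\lip}$ and the conversions between the intrinsic measure of $\Sigma$ and the ambient Hausdorff measures carry powers of $\kappa$. A Fubini computation gives $\int_{[0,\kappa]^{m-1}}\int_{\mathcal{C}^{\kappa,\floor{p}-1}_0+h}w\dif\H^{\floor{p}-1}\dif h\aleq\kappa^{\floor{p}-1}\int_{\partial\R^m_+}w$, so Chebyshev's inequality produces a set $H\subset[0,\kappa]^{m-1}$ of measure at least $\tfrac12\kappa^{m-1}$ such that $\int_{\mathcal{C}^{\kappa,\floor{p}-1}_0+h}w\aleq\kappa^{\floor{p}-1-(m-1)}\norm{w}_{L^1(\partial\R^m_+)}$ for $h\in H$; applying (ii) for these $h$ and multiplying by $\kappa^{m-\floor{p}}$, all the powers of $\kappa$ cancel exactly — this is precisely why the exponents in (iii) are the ones stated — and one obtains (iii) with $\theta\aleq\gamma^\lambda_{\Sigma_0,\Sigma}\norm{w}_{L^1(\partial\R^m_+)}$ along any sequence $\kappa_i\to0$.

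The heart of the argument, and what I expect to be the main obstacle, is (i) $\Rightarrow$ (ii): manufacturing the weight $w$ from a given extension $U\in\dot{W}^{1,p}(\R^m_+,\n)$. Restricting $U$ along $\sigma$ is not permitted, since $\sigma(\Sigma)$ is only $\floor{p}$--dimensional and hence negligible in $\R^m_+$. Instead I would define $w(y)$ as a Lusin/area-type square function of the \emph{local extension costs} of $u$ near $y$ — a dyadic sum over scales $\delta$ of the least $\dot{W}^{1,p}$--energy needed to extend $u\vert_{B^{m-1}_\delta(y)}$ into the tent $\{(z,t)\in\R^m_+ : \abs{z-y}+t<\delta\}$, normalized in $\delta$. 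Summability $\norm{w}_{L^1(\partial\R^m_+)}\aleq\norm{\deriv U}_{L^p(\R^m_+)}^p$ then follows by bounding each local cost by the energy of $U$ over the corresponding tent and summing the resulting dyadic decomposition of $\R^m_+$ by height. The weighted extension inequality is obtained by constructing $V$ on $\Sigma$ via a topological--screening covering (in the spirit of \cite{BPVS}) of a neighbourhood of $\Sigma_0$ in $\Sigma$: over dyadic distances $\delta$ from $\Sigma_0$ one glues the local extensions of $u\compose\sigma$, the Ahlfors-type quantity $\gamma^\lambda_{\Sigma_0,\Sigma}$ of \eqref{eq:Alhforslike} absorbing the multiplicity of the covering and $\seminorm{\sigma}_{\lip}^{p-1}$ accounting for the single extra one--dimensional extension direction transverse to $\Sigma_0$ — the exponent $p-1$ being exactly the one in the Gagliardo--quotient interpretation from the introduction. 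Arranging the constant to be \emph{exactly} $\gamma^\lambda_{\Sigma_0,\Sigma}\seminorm{\sigma}_{\lip}^{p-1}$, and handling that $u$ is merely Borel--measurable, are the delicate points.

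Finally, (iii) $\Rightarrow$ (i) is the reconstruction. For each $i$ choose a good translate $h_i\in H_i$; starting from the map $V$ on $\mathcal{C}^{\kappa_i,\floor{p}}_++h_i$, extend it to all of $\oline{\R}^m_+$ by the classical successive \emph{homogeneous extension} from the barycentres, cell by cell through dimensions $\floor{p}+1,\dots,m$. Since $p<\floor{p}+1\le\dots\le m$ every step has finite energy, the singular set produced has dimension $m-\floor{p}-1<m-p$ and is therefore removable for $\dot{W}^{1,p}$, and each step scales by a factor $\kappa_i$, so by (iii) the resulting maps $U_i\in\dot{W}^{1,p}(\R^m_+,\n)$ satisfy $\int_{\R^m_+}\abs{\deriv U_i}^p\le C\theta$ uniformly in $i$. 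The difficulty is that $\tr_{\partial\R^m_+}U_i$ agrees with $u$ only on the $(\floor{p}-1)$--skeleton $\mathcal{C}^{\kappa_i,\floor{p}-1}_0+h_i$ and is a homogeneous interpolation elsewhere, so weak compactness alone does not recover $u$; the constructions at the scales $\kappa_i\to0$ must instead be telescoped in a boundary collar $\partial\R^m_+\times(0,1)$, interpolating in consecutive dyadic slabs between the extensions at scales $\kappa_i$ and $\kappa_{i+1}$, the positive lower density of the sets $H_i$ providing enough freedom to control the interpolation energy and to force the boundary trace of the glued map to be exactly $u$; the required qualitative input (VMO--homotopy on generic skeletons and extension across the $\floor{p}$--skeleton) is supplied by the results of \cref{section_qualitative} and of \cites{White-homotopy,Hang_Lin_II}.
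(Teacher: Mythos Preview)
Your cycle (i)$\Rightarrow$(ii)$\Rightarrow$(iii)$\Rightarrow$(i) and your treatment of (ii)$\Rightarrow$(iii) match the paper's. The other two implications, however, diverge substantially from the paper's arguments, and in one case your proposal is more complicated than necessary.

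For (i)$\Rightarrow$(ii), the paper does \emph{not} glue local extensions or use any screening-type covering. Instead it sets
\[
w(y) \approx \int_{\R^m_+} \frac{W(x)\,x_m}{|x-y|^m}\dif x + (\text{correction terms from a smooth approximation of }U),
\]
with $W$ essentially $|\deriv U|^p$, and then produces $V$ on $\Sigma$ as the \emph{restriction} $V = U \compose \sigma_\xi$, where $\sigma_\xi \defeq \sigma + d_0\,\xi$ is a perturbation of $\sigma$ that pushes $\Sigma\setminus\Sigma_0$ into the open half-space (here $d_0$ is the intrinsic distance in $\Sigma$ to $\Sigma_0$). The key step is an integration lemma (\cref{le:integration_2}) which, by averaging over $\xi$ in a spherical cap, controls $\int_\Sigma |\deriv U|^p \compose \sigma_\xi$ by $\gamma^\lambda_{\Sigma_0,\Sigma}\,\seminorm{\sigma}_{\lip}^{p-1}\int_{\Sigma_0} w\compose\sigma$; this is exactly where the constant $\gamma^\lambda_{\Sigma_0,\Sigma}$ enters naturally. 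The correction terms in $w$ and $W$ are there precisely so that the chain rule and the trace identity $\tr_{\Sigma_0}(U\compose\sigma_\xi)=u\compose\sigma\big\vert_{\Sigma_0}$ hold for the selected $\xi$, which addresses your worry about $u$ being merely Borel. Your proposed construction via dyadic local-extension costs and gluing is conceptually different; it is not clear how to get the sharp prefactor $\gamma^\lambda_{\Sigma_0,\Sigma}\,\seminorm{\sigma}_{\lip}^{p-1}$, nor how to glue on a general simplicial complex $\Sigma$ that need not be a manifold.

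For (iii)$\Rightarrow$(i), your homogeneous extension from the $\floor{p}$--skeleton up to dimension $m$ is exactly what the paper does, and the uniform energy bound is as you describe. But your claim that ``weak compactness alone does not recover $u$'' and that a telescoping between scales plus VMO--homotopy input is required is mistaken. The trace of the homogeneous extension $U^{j,h}$ is $u\compose(\mathcal P^{\kappa_j,\floor p}(\cdot-h)+h)$, i.e.\ $u$ composed with the retraction onto the shifted boundary skeleton. Since $x\mapsto x-\mathcal P^{\kappa_j,\floor p}(x)$ is $\kappa_j\Z^{m-1}$--periodic with $L^\infty$ norm $\aleq\kappa_j$, an averaging over $h\in[0,\kappa_j]^{m-1}$ combined with the continuity of translations on $L^p_{\loc}$ (\cref{le:mainconvergenceofbdrydata}, the ``opening'' argument of \cite{Brezis-Mironescu-approximation}) shows that for suitable $h_j\in H_j$ the traces converge to $u$ in $L^p_{\loc}$; the positive lower density of $H_j$ is used only to ensure one can pick $h_j$ inside $H_j$. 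Then weak compactness of $(U_j)$ in $\dot W^{1,p}$ together with continuity of the trace gives $\tr_{\partial\R^m_+} U = u$ directly. No telescoping in a collar, and no input from \cref{section_qualitative} or from \cites{White-homotopy,Hang_Lin_II}, is needed here.
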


The function \(w\) appearing in \ref{it_aithesh6rea9ouGaib0Dae2f} can be interpreted as \emph{an extension energy density}; the mappings \(\sigma\) can be interpreted as generalized paths going through \(\oline{\R}^m_+\).

In the paths condition \ref{it_aithesh6rea9ouGaib0Dae2f}, we emphasize the facts that, as in singular homology, we do not assume anything about the local or global injectivity of \(\sigma\) --- the map \(\sigma\) could even take a constant value where \(w\) is finite, in which case \(u \compose \sigma \big\vert_{\Sigma_0}\) is of course trivially extended by a constant --- and that there is no Jacobian appearing in \(\int_{\Sigma_0} w \compose \sigma\): we are integrating \(w \compose \sigma\) on \(\Sigma_0\) rather than integrating \(w\) on the set \(\sigma (\Sigma_0)\).

Assertion \ref{it_ap6kahs3enainah2lohz9ET2} is very rigid because of the presence of a cubication, whereas assertion \ref{it_aithesh6rea9ouGaib0Dae2f} is very robust --- it is invariant under diffeomorphisms whose derivative and its inverse are controlled uniformly --- and is thus a natural candidate for a geometrical characterization of the image of the trace operator.

In broad terms, the proof of \cref{theorem_main_halfspace} consists in deducing \ref{it_aithesh6rea9ouGaib0Dae2f} from \ref{it_eiquoongeiJ8quahb5yu7hoo} by a Fubini type argument (the proof is given in \cref{ss:1implies2}), \ref{it_ap6kahs3enainah2lohz9ET2} from \ref{it_aithesh6rea9ouGaib0Dae2f} by the particularization to families of translations of canonical cubical complexes (the proof is given in \cref{ss:2implies3}), and \ref{it_eiquoongeiJ8quahb5yu7hoo} from \ref{it_ap6kahs3enainah2lohz9ET2} by defining homogeneous extensions on cubical skeletons (the proof is given in \cref{ss:3implies1}).

We next have a geometric statement of \cref{theorem_main_halfspace} on manifolds.

\begin{theorem}
\label{theorem_global_necessary_sufficient}
Let \(\M\) be an \(m\)--dimensional compact Riemannian manifold with boundary \(\partial \M\) and \(1 < p < m\). There exists a \(\delta > 0\) and a $\lambda>1$ such that the following conditions are equivalent:

\begin{enumerate}[label=(\roman*)]
    \item For any Borel--measurable map \(u \colon  \partial \M \to \n\), 
there exists a map \(U \in \dot{W}^{1, p} (\M, \n)\) with \(\tr_{\partial \M} U = u\).
    
    \item There exists a summable Borel--measurable function 
\(w \colon  \partial \M \to [0,\infty]\)
such that for every homogeneous simplicial complex \(\Sigma\) of dimension \(\floor{p}\), for every subcomplex \(\Sigma_0\subset \Sigma\) of dimension \(\floor{p - 1}\),  and for every Lipschitz--continuous map \(\sigma \colon  \Sigma \to \M\) satisfying 
\begin{align*}
 \sigma (\Sigma_0) &\subseteq \partial \M, 
 &
 \seminorm{\sigma}_{\lip} \sup_{\Sigma} d_0 & \le \delta, &
 & \text{ and }&
 \int_{\Sigma_0} w \compose \sigma &< \infty,
\end{align*}
 there exists a mapping \(V \in \dot{W}^{1, p} (\Sigma, \n)\) such that \(\tr_{\Sigma_0} V   = u \compose \sigma \big \vert_{\Sigma_0}\) and
\begin{equation}\label{eq:againthesame}
 \int_{\Sigma} \abs{\deriv V}^p 
 \leq
  \gamma^\lambda_{\Sigma_0, \Sigma}
   \,
   \seminorm{\sigma}_{\lip}^{p - 1} \int_{\Sigma_0} w \compose \sigma.
\end{equation}
\end{enumerate}
\end{theorem}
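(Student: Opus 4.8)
\emph{Strategy.} The plan is to deduce \Cref{theorem_global_necessary_sufficient} from the half-space result \Cref{theorem_main_halfspace} by localizing near \(\partial \M\), exploiting the robustness of condition \ref{it_aithesh6rea9ouGaib0Dae2f} under bi-Lipschitz changes of variables with two-sided bounds that is stressed after \Cref{theorem_main_halfspace}; throughout, we fix a Borel map \(u \colon \partial \M \to \n\) and prove the equivalence of the two conditions for it. Fix first a finite family of boundary charts \(\Phi_j \colon \oline{\B}^m_+ \to \M\), \(j \in \{1, \dots, N\}\), each bi-Lipschitz onto its image with \(\max\brk{\seminorm{\Phi_j}_{\lip}, \seminorm{\Phi_j^{-1}}_{\lip}} \le L\), sending \(\oline{\B}^m \cap \partial \R^m_+\) into \(\partial \M\), and with the images of the half-balls of radius \(\tfrac12\) covering a collar neighbourhood \(\U\) of \(\partial \M\). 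Then choose \(\delta > 0\) small enough that the constraint \(\seminorm{\sigma}_{\lip} \sup_\Sigma d_0 \le \delta\) forces \(\sigma(\Sigma) \subseteq \U\), and \(\lambda > 1\) so that balls transported through any \(\Phi_j^{-1} \compose \sigma\) are comparable, up to the distortion \(L\), to those entering the Ahlfors-type quantity \(\gamma^\lambda_{\Sigma_0, \Sigma}\) of \Cref{theorem_main_halfspace}; the requirement that these constants fit \emph{both} implications below pins down the admissible ranges of \(\delta\) and \(\lambda\).

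\emph{From (i) to (ii).} Let \(U \in \dot W^{1,p}(\M, \n)\) with \(\tr_{\partial \M} U = u\). For each \(j\), the construction of \cref{ss:1implies2} applied in the chart to \(U \compose \Phi_j \in \dot W^{1,p}(\B^m_+, \n)\) yields a summable extension energy density \(w_j\) on \(\oline{\B}^m \cap \partial \R^m_+\); put \(w \defeq C \sum_{j = 1}^{N} \chi_j \brk{w_j \compose \Phi_j^{-1}}\), where \((\chi_j)_{j}\) is a Lipschitz partition of unity subordinate to the half-balls of radius \(\tfrac12\) and \(C\) is a large constant, so that \(w\) is summable on \(\partial \M\). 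Given \(\Sigma\), \(\Sigma_0\), \(\sigma\) admissible for (ii), subdivide \(\Sigma\) into subcomplexes \(\tau_k\) with \(\diam \sigma(\tau_k)\) below the Lebesgue number of the chart cover, so that each \(\sigma(\tau_k)\) lies in a single \(\Phi_{j(k)}\brk{\oline{\B}^m_+}\); running the extension construction of \cref{ss:1implies2} in chart \(j(k)\) on \(\Phi_{j(k)}^{-1} \compose \sigma \big\vert_{\tau_k}\) --- after rescaling \(\tau_k\) to unit side-length, which is licit since the estimate in \ref{it_aithesh6rea9ouGaib0Dae2f} is invariant under rescaling the metric of the complex --- produces \(V_k \in \dot W^{1,p}(\tau_k, \n)\) with \(\tr_{\tau_k \cap \Sigma_0} V_k = u \compose \sigma \big\vert_{\tau_k \cap \Sigma_0}\) and the corresponding local energy bound in terms of \(\gamma^\lambda_{\tau_k \cap \Sigma_0, \tau_k}\), \(\seminorm{\sigma}_{\lip}\), \(L\) and \(\int_{\tau_k \cap \Sigma_0} w_{j(k)} \compose \Phi_{j(k)}^{-1} \compose \sigma\). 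Since this construction is canonical --- each \(V_k\) is obtained from \(U\) and \(\sigma \big\vert_{\tau_k}\) by one and the same recipe --- the \(V_k\) agree on shared faces (on faces inside \(\Sigma_0\) their common value is \(u \compose \sigma\)) and glue to a \(V \in \dot W^{1,p}(\Sigma, \n)\). Summing the local bounds and absorbing into \(C\) the numerical losses together with the comparisons \(\gamma^\lambda_{\tau_k \cap \Sigma_0, \tau_k} \aleq \gamma^\lambda_{\Sigma_0, \Sigma}\) and \(\sum_k \int_{\tau_k \cap \Sigma_0} w_{j(k)} \compose \Phi_{j(k)}^{-1} \compose \sigma \aleq \int_{\Sigma_0} w \compose \sigma\) yields \eqref{eq:againthesame}.

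\emph{From (ii) to (i).} Here we transplant the constructions behind \cref{ss:2implies3} and \cref{ss:3implies1} --- the particularization of \ref{it_aithesh6rea9ouGaib0Dae2f} to cubical families and the passage from \ref{it_ap6kahs3enainah2lohz9ET2} to \ref{it_eiquoongeiJ8quahb5yu7hoo} --- to a collar \([0, \varepsilon) \times \partial \M\) of \(\partial \M\). The key point is that a Lipschitz map \(\sigma\) of a simplicial complex whose image sits in a sufficiently thin collar automatically satisfies \(\seminorm{\sigma}_{\lip} \sup_\Sigma d_0 \le \delta\), hence is admissible in (ii); pulling the relevant cubical skeletons into the collar through the charts \(\Phi_j\), and letting a generic family of triangulations of \(\partial \M\) play the role of the translation parameter \(h\), (ii) furnishes the skeletal extension estimates that make the construction run. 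It produces a map on \([0, \varepsilon) \times \partial \M\) whose trace on \(\partial \M\) is \(u\) --- the innermost layer being handled exactly by the standard mollified extension of \(u\) --- and whose trace on \(\{\varepsilon\} \times \partial \M\) is a constant. Extending by that constant over \(\M \setminus \brk{[0, \varepsilon) \times \partial \M}\) and gluing along \(\{\varepsilon\} \times \partial \M\) gives the desired \(U \in \dot W^{1,p}(\M, \n)\) with \(\tr_{\partial \M} U = u\).

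\emph{Main difficulty.} The delicate step is to propagate the \emph{sharp} Ahlfors-type constant \(\gamma^\lambda_{\Sigma_0, \Sigma}\) through the three operations that localization forces on us --- the bi-Lipschitz chart changes, the subdivision of \(\Sigma\), and the reassembly of the \(\n\)-valued pieces --- while keeping the conclusion in the exact form \eqref{eq:againthesame}; this is what forces \(\lambda\) to be chosen rather than arbitrary, and the smallness of \(\delta\), which confines every admissible \(\sigma\) to a fixed collar, is designed precisely to make this propagation possible. A related subtlety is that gluing the locally built \(\n\)-valued Sobolev maps is legitimate only because they come from a common combinatorial recipe, so the half-space constructions of \Cref{theorem_main_halfspace} have to be kept canonical enough to be patched across chart overlaps.
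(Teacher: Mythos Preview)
There are two substantive gaps, and the paper takes a structurally different route that avoids both.

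\emph{The gluing in (i)$\Rightarrow$(ii) is not justified.} In the construction of \cref{pr:trace_trails_necessary_2}, the extension on \(\Sigma\) is \(V = U \compose \sigma_\xi\) with \(\sigma_\xi = \sigma + d_0\,\xi\), where the perturbation vector \(\xi \in C^\eta_\rho\) is selected by an averaging argument (the choice leading to \eqref{eq_lae5shaevao3eepheikee5Ce}) that depends on the whole complex; it is not canonical. If you run this separately on each \(\tau_k\) in chart coordinates \(\Phi_{j(k)}^{-1} \compose \sigma\), you are forced to choose a different \(\xi_k\) for each piece, and the very meaning of the translation ``\(+\,d_0\,\xi\)'' changes from chart to chart. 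On an interior face shared by \(\tau_k\) and \(\tau_{k'}\) (where \(d_0 > 0\)) there is no mechanism making the two candidates agree, so the patched map need not be well-defined, let alone in \(\dot W^{1,p}\). The related comparison \(\gamma^\lambda_{\tau_k \cap \Sigma_0, \tau_k} \le C\,\gamma^\lambda_{\Sigma_0, \Sigma}\) also fails in general: any subcomplex \(\tau_k\) with \(\tau_k \cap \Sigma_0 = \emptyset\) has infinite local constant.

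\emph{The constant trace in (ii)$\Rightarrow$(i) is unfounded.} The homogeneous extension of \cref{pr:(3)to(1)} does not become constant at any finite height; it fills the entire half-space. There is no reason a collar version would have constant trace on \(\{\varepsilon\} \times \partial \M\), so the proposed extension into the interior of \(\M\) by that constant is not available. This is precisely where the global topological obstruction \ref{it_dahGaiquu0xejai3Tah8aipa} lives, and it cannot be bypassed so cheaply.

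\emph{How the paper proceeds.} Both difficulties are sidestepped by a single global device, \cref{proposition_mbdry_embedding_and_retraction}: the compact manifold \(\m\) is embedded isometrically into a half-space \(\oline{\R}^\mu_+\) of possibly much larger dimension \(\mu\), with \(\partial \m\) landing in \(\partial \R^\mu_+\) and with a smooth retraction \(\Pi_\m \colon \mathcal{U} \to \m\) from a tubular neighbourhood. The half-space arguments are then run once, globally, in \(\R^\mu_+\). For (i)$\Rightarrow$(ii) one extends \(U\) to \(\bar U \defeq U \compose \Pi_\m\) on \(\mathcal{U}_+\) and applies a localized \cref{le:integration_2} with a \emph{single} \(\xi\); the smallness \(\seminorm{\sigma}_{\lip}\sup_\Sigma d_0 \le \delta\) is exactly what keeps the perturbed path \(\sigma + d_0\,\xi\) inside \(\mathcal{U}_+\). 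For (ii)$\Rightarrow$(i) one uses a single cubication of a neighbourhood \(\mathcal{W}_+ \supset \m\) in \(\R^\mu_+\), builds \(\bar U\) on all of \(\mathcal{V}_+ \supset \m\) via \cref{pr:(2)to(3),pr:(3)to(1)}, and then restricts by Fubini to a generic translate of \(\m\). No chart-by-chart gluing is ever performed.
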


Here we have defined $d_0\colon \Sigma \to \R$ to be the distance to \(\Sigma_0\) in \(\Sigma\) by
\begin{equation}
\label{eq_phuYoo0euhoi7shu5Saeng9}
 d_0 (y) \defeq \inf \{d^\Sigma (y, z) \colon z \in {\Sigma_0}\};
\end{equation}
the quantity \(\sup_{\Sigma} d_0\) quantifies how far points in \(\Sigma\) can be from \(\Sigma_0\).

In comparison with \cref{theorem_main_halfspace}, the map \(\sigma\) is assumed to satisfy the nonlinear conditions that \(\sigma (\Sigma) \subseteq \m\) and \(\sigma(\Sigma_0) \subseteq \partial \m\). 
The proof of \cref{theorem_global_necessary_sufficient} is based on the proof of \cref{theorem_main_halfspace} through suitable localization arguments.

Finally, as in Isobe's characterization by \ref{it_zuugh7Thai5ThiephahtoP8y} and \ref{it_dahGaiquu0xejai3Tah8aipa}, the obstruction to the extension can be decoupled into a quantitative obstruction to the extension to a neighborhood of the boundary and a qualitative obstruction to the extension to the whole manifold.

\begin{theorem}
\label{proof_global_decomposition}
Let \(\M\) be an \(m\)--dimensional compact Riemannian manifold with boundary \(\partial \M\) and let \(1 < p <m\).
There exists a \(\delta > 0\) and a $\lambda>1$ such that for each \(u\colon \partial \m \to \n\)
 the existence of an extension \(U \in \dot{W}^{1, p} (\M, \n)\) with \(\tr_{\partial \M} U = u\) is equivalent to the following property:
 
 There exists a summable function \(w \colon \partial \M \to [0,\infty]\) such that for every finite homogeneous simplicial complex \(\Sigma\) of dimension $\floor p$, every subcomplex \(\Sigma_0\subset \Sigma\) of codimension \(1\), and every Lipschitz--continuous mapping \(\sigma \colon \Sigma \to \M\) satisfying \(\int_{\Sigma_0} w \compose \sigma < \infty\), one has:
\begin{enumerate}
[label=(\alph*)]
\item 
 \label{it_kuixoongeeleag7aeVeiyie4}
 If \(\sigma (\Sigma) \subseteq \partial \M\) and \(\seminorm{\sigma}_{\lip}\sup_{\Sigma}d_0 \le \delta\), then there exists a mapping \(V \in \dot{W}^{1, p} (\Sigma, \n)\) such that \(\tr_{\Sigma_0} V = u \compose \sigma\big\rvert_{\Sigma_0}\) and 
\[
 \int_{\Sigma} \abs{\deriv V}^p 
 \le \gamma^\lambda_{\Sigma_0, \Sigma}\, \seminorm{\sigma}_{\lip}^{p - 1} \int_{\Sigma_0} w \compose \sigma.
\]
\item
\label{it_Chea0siechoa5uTieca2Xooc} If \(\sigma (\Sigma_0) \subseteq \partial \M\), then \(u \compose \sigma \big\vert_{\Sigma_0}\) is homotopic in \(\mathrm{VMO} (\Sigma_0, \n)\) to \(V \big\vert_{\Sigma_0}\) for some  \(V \in C (\Sigma, \n)\).
\end{enumerate}
\end{theorem}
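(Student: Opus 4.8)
The plan is to deduce this decoupled statement from \cref{theorem_global_necessary_sufficient} together with the qualitative characterization of traces (Isobe's conditions, referred to in the excerpt as \ref{it_dahGaiquu0xejai3Tah8aipa}), which in the excerpt is flagged as belonging to a section \texttt{section\_qualitative}. First I would establish the forward implication: assuming \(U \in \dot{W}^{1,p}(\M,\n)\) extends \(u\), I obtain the same extension energy density \(w\) as in \cref{theorem_global_necessary_sufficient}\,(ii) (built by a Fubini-type slicing argument on \(U\)); this immediately gives property \ref{it_kuixoongeeleag7aeVeiyie4}, which is literally the conclusion of (ii) restricted to those \(\sigma\) with \(\sigma(\Sigma)\subseteq\partial\M\). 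For \ref{it_Chea0siechoa5uTieca2Xooc} I would invoke the qualitative half of the theory: the existence of a \(\dot{W}^{1,p}\)-extension forces \(u\) restricted to a generic \(\floor{p-1}\)-skeleton to be VMO-homotopic to the restriction of a continuous map from the \(\floor{p}\)-skeleton; pulling this back along a Lipschitz \(\sigma\colon\Sigma\to\M\) with \(\sigma(\Sigma_0)\subseteq\partial\M\), and using stability of VMO-homotopy under composition with Lipschitz maps, yields a continuous \(V\in C(\Sigma,\n)\) with \(u\compose\sigma\big\vert_{\Sigma_0}\) VMO-homotopic to \(V\big\vert_{\Sigma_0}\). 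The delicate point here is that \(\sigma\) need not be injective nor transverse, so one must argue that \(u\compose\sigma\) still lands in a space where the VMO-homotopy notion of \cite{Brezis_Nirenberg_1} makes sense and is preserved; this is where I expect to lean on the genericity/screening machinery of \cite{BPVS} and on the fact that precomposition by Lipschitz maps does not increase the relevant BMO-type seminorm uncontrollably.

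For the converse, suppose \(w\) is given so that \ref{it_kuixoongeeleag7aeVeiyie4} and \ref{it_Chea0siechoa5uTieca2Xooc} both hold for every admissible \(\sigma\). I would like to feed \ref{it_kuixoongeeleag7aeVeiyie4} into \cref{theorem_global_necessary_sufficient}\,(ii) to get a \(\dot{W}^{1,p}\)-extension, but (ii) demands the quantitative extension estimate for \emph{all} \(\sigma\) with \(\sigma(\Sigma_0)\subseteq\partial\M\) and \(\seminorm{\sigma}_{\lip}\sup_\Sigma d_0\le\delta\), whereas \ref{it_kuixoongeeleag7aeVeiyie4} only supplies it when additionally \(\sigma(\Sigma)\subseteq\partial\M\). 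The key step is therefore to combine the ``near-the-boundary'' quantitative control coming from \ref{it_kuixoongeeleag7aeVeiyie4} with the ``global topological'' control coming from \ref{it_Chea0siechoa5uTieca2Xooc}: the quantitative input lets one extend \(u\compose\sigma\) from \(\Sigma_0\) into a thin collar of \(\Sigma_0\) inside \(\Sigma\) with the stated energy bound, and then \ref{it_Chea0siechoa5uTieca2Xooc} (VMO-homotopy to a continuous map on all of \(\Sigma\)) removes the remaining topological obstruction on the bulk of \(\Sigma\), where there is no quantitative constraint because one may use any bounded-energy extension of a continuous (hence VMO) map. Patching the collar extension with the continuous bulk extension across a transition region — using a partition of unity and the standard estimate that the gluing cost is controlled by the energies on either side plus the VMO-homotopy — produces the \(V\in\dot{W}^{1,p}(\Sigma,\n)\) required by \cref{theorem_global_necessary_sufficient}\,(ii), with only a harmless enlargement of the constant \(\lambda\) and a possible shrinking of \(\delta\). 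Once (ii) holds, \cref{theorem_global_necessary_sufficient} gives the desired \(U\in\dot{W}^{1,p}(\M,\n)\).

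The main obstacle, and the step I would spend the most care on, is precisely this patching/decoupling argument in the converse direction: one must make quantitative sense of ``extend a bit into the collar, then interpolate to a continuous global map'' in a way that (a) keeps the energy bounded by \(\gamma^\lambda_{\Sigma_0,\Sigma}\,\seminorm{\sigma}_{\lip}^{p-1}\int_{\Sigma_0}w\compose\sigma\) up to adjusting \(\lambda\), and (b) does not require any injectivity of \(\sigma\). I expect the collar to be taken of thickness comparable to \(\seminorm{\sigma}_{\lip}\sup_\Sigma d_0/\delta\) so that the scaling of the energy matches, and the VMO-homotopy from \ref{it_Chea0siechoa5uTieca2Xooc} to be realized concretely as a \(\dot{W}^{1,p}\) map on \(\Sigma_0\times[0,1]\) after mollification — this is exactly the bridge between the VMO notion of \cite{Brezis_Nirenberg_1} and genuine Sobolev extensions. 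A secondary technical point is the choice of the universal \(\delta\) and \(\lambda\): both should be inherited from \cref{theorem_global_necessary_sufficient} (they depend only on \(\M\) and \(p\)), with at most the finitely many geometric constants from the localization and collar constructions folded in.
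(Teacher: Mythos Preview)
Your forward implication is essentially right: (a) is a special case of \cref{theorem_global_necessary_sufficient}(ii), and (b) is the qualitative necessary condition established separately. The paper proves (b) directly via a dedicated proposition (\cref{pr:necessaryVMO_manifold}), built from a maximal-function/Lebesgue-point argument, rather than by ``pulling back'' Isobe's skeleton statement along \(\sigma\); this dissolves your worry about non-injectivity and transversality, since the construction of \(w\) never refers to any triangulation of \(\partial\M\).

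The converse, however, has a genuine gap. You propose to verify \cref{theorem_global_necessary_sufficient}(ii) for each admissible \(\sigma\) by patching a collar extension (from (a)) with a continuous bulk extension (from (b)). But (ii) demands the specific bound \(\int_\Sigma \abs{\deriv V}^p \le \gamma^\lambda_{\Sigma_0,\Sigma}\,\seminorm{\sigma}_{\lip}^{p-1}\int_{\Sigma_0} w\compose\sigma\), and the energy of your continuous bulk map is not controlled by \(\int_{\Sigma_0} w\compose\sigma\): it depends on the geometry of \(\Sigma\) and on the unquantified homotopy supplied by (b). Your assertion that ``there is no quantitative constraint'' on the bulk is precisely what fails when checking (ii). There is a second issue: to invoke (a) on the collar you need \(\sigma(\Sigma)\subseteq\partial\M\), which the \(\sigma\)'s in (ii) do not satisfy, and you never say how to retract \(\sigma\) to \(\partial\M\) on that collar while keeping \(\seminorm{\sigma}_{\lip}\) under control.

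The paper avoids both problems by decoupling at the level of \(\M\), not at the level of \(\Sigma\). First, observe that (a) alone is exactly condition (ii) of \cref{theorem_global_necessary_sufficient} for the \emph{collar manifold} \(\partial\M\times(0,1)\): any \(\sigma\colon\Sigma\to\partial\M\times[0,1)\) projects to \(\partial\M\) without increasing \(\seminorm{\sigma}_{\lip}\) or changing \(\sigma\vert_{\Sigma_0}\). Hence one gets \(V\in\dot{W}^{1,p}(\partial\M\times[0,1],\n)\) with \(\tr V=u\) on \(\partial\M\times\{0\}\), and a Fubini slice yields \(v\defeq V\vert_{\partial\M\times\{1\}}\in\dot{W}^{1,p}(\partial\M,\n)\). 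The key lemma (\cref{proposition_trace_very_regular}) then says that a boundary datum already in \(\dot{W}^{1,p}(\partial\M,\n)\) extends to \(\dot{W}^{1,p}(\M,\n)\) under the \emph{purely topological} condition (b)---no energy bound needed. One checks that (b) for \(u\) transfers to \(v\) via the VMO homotopy along \(\Sigma_0\times[0,1]\hookrightarrow\partial\M\times[0,1]\) (using \cref{pr:necessaryVMO_manifold} applied to \(V\)), applies the lemma to extend \(v\) across \(\M\), and glues with the collar piece. So the quantitative hypothesis (a) is spent entirely on the collar, the qualitative hypothesis (b) is spent entirely on the interior, and the bridge between them is the upgraded \(\dot W^{1,p}\) regularity of the intermediate slice \(v\).
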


The assertion \ref{it_kuixoongeeleag7aeVeiyie4} differs from the condition of \cref{theorem_global_necessary_sufficient}, by the fact that in \ref{it_kuixoongeeleag7aeVeiyie4} we assume the stronger condition that \(\sigma (\Sigma) \subseteq \partial \m\) instead of the weaker condition that \(\sigma (\Sigma_0) \subseteq \partial \m\) and  \(\sigma (\Sigma) \subseteq \m\), resulting in a weaker condition; in order to keep the equivalence we supplement \ref{it_kuixoongeeleag7aeVeiyie4} with \ref{it_Chea0siechoa5uTieca2Xooc}, which is a reformulation of Isobe's condition \ref{it_dahGaiquu0xejai3Tah8aipa} as a condition on paths, as it appears in topological screening for the approximation of Sobolev mappings \cite{BPVS}.
In the particular case where \(p \not \in \N\), assertion \ref{it_Chea0siechoa5uTieca2Xooc} is equivalent to the fact that \(u \compose \sigma \big\vert_{\Sigma_0}\) is almost everywhere equal to the restriction to \(\Sigma_0\) of some \(V \in C (\Sigma, \n)\).
In contrast with \cref{theorem_global_necessary_sufficient}, \cref{proof_global_decomposition} does not give a quantitative estimate; such an estimate is precluded by the qualitative character of assertion \ref{it_Chea0siechoa5uTieca2Xooc}.

\subsection*{Acknowledgments}
K.M. was supported by FSR Incoming postdoc.
K.M. and J.V.S. were both supported by the Mandat d'Impulsion Scientifique F.4523.17, ``Topological singularities of Sobolev maps'' of the Fonds de la Recherche Scientifique--FNRS.

\section{The case of half-spaces}\label{s:halfspace}

\subsection{From the extension to the paths condition}
\label{ss:1implies2}
We prove that \ref{it_eiquoongeiJ8quahb5yu7hoo} implies \ref{it_aithesh6rea9ouGaib0Dae2f} in \cref{theorem_main_halfspace}.

\begin{proposition}
\label{pr:trace_trails_necessary_2}
Let \(2 \le m \in \N\), \(\lambda > 1\), and \(p\in [1,\infty)\).
There exists a constant \(C\) such that given any \(U \in \dot{W}^{1, p} (\R^m_+,\n)\) there exists a Borel--measurable function \(w \colon \partial \R^{m}_+ \to [0, \infty]\) with
\[
 \int_{\partial \R^{m}_+} w \le C \int_{\R^m_+} \abs{\deriv U}^p
\]
with the following property:

Suppose that $\Sigma$ is a finite homogeneous simplicial complex, $\Sigma_0 \subset \Sigma$ is a subcomplex of codimension 1, that the map \(\sigma \colon \Sigma \to \oline{\R}^m_+\) is Lipschitz--continuous and satisfies  \(\sigma (\Sigma_0) \subset \partial \R^m_+\), and that
\[
\int_{\Sigma_0} w \compose \sigma < \infty. 
\]
Then there exists a map \(V \in  \dot{W}^{1, p} (\Sigma, \n)\) with \(\tr_{\Sigma_0}V= U \compose \sigma \big \vert_{\Sigma_0}\) almost everywhere on \(\Sigma_0\) and 
\begin{equation}\label{eq:estimateofUonskeleton}
  \int_{\Sigma} \abs{\deriv V}^p\\
  \le 
  \gamma^{\lambda}_{\Sigma_0, \Sigma} \,\seminorm{\sigma}_\lip^{p-1}
  \int_{\Sigma_0}  w \compose \sigma,
\end{equation}
where $\gamma^{\lambda}_{\Sigma_0, \Sigma}$ is defined as in \eqref{eq:Alhforslike}.
\end{proposition}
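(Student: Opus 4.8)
The plan is to produce the extension $V$ by composing $U$ with a well-chosen family of Lipschitz maps $\Psi_a \colon \Sigma \to \oline{\R}^m_+$, parametrized by $a$ in a fixed bounded set, which all agree with $\sigma$ on $\Sigma_0$ but push the rest of $\Sigma$ into a conical neighbourhood of $\sigma(\Sigma_0)$ of height comparable to the distance $d_0$ to $\Sigma_0$, and then to average the resulting energies over $a$. First I would define the extension energy density as a conical vertical average of $\abs{\deriv U}^p$: for $x' \in \partial \R^m_+ = \R^{m-1}$,
\[
 w(x') \defeq \int_0^\infty \frac{1}{t^{m-1}} \int_{B^{m-1}(x', c t)} \abs{\deriv U(z', t)}^p \dif z' \dif t ,
\]
with $c$ a fixed constant to be matched to the construction. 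Tonelli's theorem gives directly $\int_{\partial \R^m_+} w = \omega_{m-1} c^{m-1} \int_{\R^m_+} \abs{\deriv U}^p$, which is the required global bound (since at the end we may multiply $w$ by a fixed factor, the value of the constant $C$ is harmless). Heuristically, $w(x')$ is the Sobolev energy of the ``radial'' reconstruction of the trace value $u(x')$ from the values of $U$ in a cone with vertex $x'$.

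Next, given $\Sigma$, $\Sigma_0 \subset \Sigma$ and $\sigma$, write $\sigma = (\sigma', \sigma_m)$ with $\sigma_m \ge 0$ on $\Sigma$ and $\sigma_m = 0$ on $\Sigma_0$; for $a = (a', a_m)$ in a fixed bounded set $Q \subset \R^{m-1} \times (1/2, 3/2)$ (a small ball in the first factor), I would set
\[
 \Psi_a (y) \defeq \bigl(\sigma'(y) + \seminorm{\sigma}_{\lip}\, d_0(y)\, a' ,\ \sigma_m(y) + \seminorm{\sigma}_{\lip}\, d_0(y)\, a_m \bigr) .
\]
Using $\abs{\deriv d_0} \le 1$ one checks that $\Psi_a$ is Lipschitz with $\seminorm{\Psi_a}_{\lip} \le C \seminorm{\sigma}_{\lip}$, that it maps $\Sigma$ into $\oline{\R}^m_+$ (the last coordinate is $\ge \seminorm{\sigma}_{\lip}\, d_0(y)/2 \ge 0$ because $a_m > 1/2$), that $\Psi_a = \sigma$ on $\Sigma_0$, and that for every $y$ with $d_0(y) > 0$ the affine map $a \mapsto \Psi_a(y)$ has Jacobian $(\seminorm{\sigma}_{\lip}\, d_0(y))^m > 0$. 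A Fubini argument over $a \in Q$ then shows that for almost every $a$ the composition $V_a \defeq U \compose \Psi_a$ is defined almost everywhere on $\Sigma$ and takes values in $\n$; the chain rule gives $\abs{\deriv V_a} \le C \seminorm{\sigma}_{\lip}\, \abs{\deriv U}\compose \Psi_a$, and once the total energy is known to be finite, dominated convergence forces the energy of $V_a$ on $\{0 < d_0 < \varepsilon\}$ to vanish as $\varepsilon \to 0$, so that $V_a \in \dot{W}^{1,p}(\Sigma, \n)$. Since $\Psi_a(\Sigma_0) = \sigma(\Sigma_0) \subset \partial \R^m_+$ and $\Psi_a$ maps the collar $\{0 < d_0 < \varepsilon\}$ into a conical neighbourhood of $\sigma(\Sigma_0)$, one obtains $\tr_{\Sigma_0} V_a = u \compose \sigma\big\vert_{\Sigma_0}$. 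This last trace identity, which I would prove by approximating $U$ by maps smooth up to the boundary and invoking Lebesgue points of the trace, is the step I expect to require the most care, precisely because $U$ is only defined almost everywhere while $\Psi_a$ degenerates along $\Sigma_0$.

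For the energy estimate I would average the chain-rule bound over $a \in Q$ and, for fixed $y$ with $d_0(y) > 0$, change variables $z = \Psi_a(y)$ (so that $\dif a = (\seminorm{\sigma}_{\lip}\, d_0(y))^{-m} \dif z$ and $z$ ranges over a box $R_y$ of horizontal size $\sim \seminorm{\sigma}_{\lip}\, d_0(y)$ at heights $\sim \seminorm{\sigma}_{\lip}\, d_0(y)$), obtaining
\[
 \frac{1}{\abs{Q}}\int_Q \int_\Sigma \abs{\deriv V_a}^p \dif a
 \le C \seminorm{\sigma}_{\lip}^{\,p - m} \int_\Sigma \frac{1}{d_0(y)^{m}} \int_{R_y} \abs{\deriv U(z)}^p \dif z \dif y .
\]
Then I would apply the coarea formula to the $1$-Lipschitz function $d_0$, cover $\Sigma_0$ by balls $B^\Sigma_{\delta}(p_j)$ with $\delta \sim t / \seminorm{\sigma}_{\lip}$ and bounded overlap, use the Ahlfors-type inequality $\abs{B^\Sigma_{\lambda \delta}(p_j)} \le \gamma^\lambda_{\Sigma_0, \Sigma}\, \delta\, \abs{\Sigma_0 \cap B^\Sigma_{\delta}(p_j)}$ to transfer the $y$-integral onto $\Sigma_0$ (this is exactly where the quantity $\gamma^\lambda_{\Sigma_0,\Sigma}$ and one power of $\seminorm{\sigma}_{\lip}$ enter, $\delta$ pairing with $t$), and finally recognize that the resulting $t$-integral at $\sigma(y')$ is precisely $w(\sigma(y'))$ provided $c$ was chosen $\ge$ the aperture constant of the cone. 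This yields $\frac{1}{\abs Q}\int_Q \int_\Sigma \abs{\deriv V_a}^p \dif a \le C \gamma^\lambda_{\Sigma_0, \Sigma}\, \seminorm{\sigma}_{\lip}^{\,p - 1} \int_{\Sigma_0} w \compose \sigma$; choosing $a \in Q$ with $\int_\Sigma \abs{\deriv V_a}^p$ below its average and absorbing $C$ into $w$ finishes the proof. The second point needing care is the tuning of the aperture of the cone and of the set $Q$ so that all geometric enlargement factors entering the covering are $\le \lambda$, which is what lets the estimate hold with the prescribed $\lambda > 1$ rather than with some larger constant; since $\sigma$ may be arbitrarily non-injective and even collapsing, this entire argument must be run using only the metric–measure structure of $\Sigma$ and $\Sigma_0$ and never any regularity of the image $\sigma(\Sigma)$.
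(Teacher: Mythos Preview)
Your strategy --- perturb $\sigma$ by a vector proportional to $d_0$, compose with $U$, average over the perturbation parameter, pick a good parameter --- is exactly the paper's strategy: its map $\sigma_\xi = \sigma + d_0\,\xi$ with $\xi$ ranging over a solid spherical cap is your $\Psi_a$ up to the normalisation $\xi = \seminorm{\sigma}_{\lip}\,a$. Two points of comparison are worth flagging.

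\textbf{The definition of $w$ and the regularity/trace step.} Your $w$ encodes only $\abs{\deriv U}^p$; the paper's $w$ (eq.~(2.9)) additionally bakes in an approximating sequence $(U_j)$ via the terms $\sum_j \abs{\deriv U_j - \deriv U}^p$, $\sum_j \abs{U_j - U}^p$, and $\sum_j \abs{U_j - u}^p$ on the boundary. These extra terms are what make your two ``points needing care'' go through cleanly: the finiteness of $\int_\Sigma W\compose\sigma_\xi$ then forces $U_j\compose\sigma_\xi \to U\compose\sigma_\xi$ in $W^{1,p}_{\mathrm{loc}}(\Sigma)$ (giving the chain rule and $V\in\dot W^{1,p}$), and the boundary term forces $U_j\compose\sigma \to u\compose\sigma$ in $L^p_{\mathrm{loc}}(\Sigma_0)$ (giving the trace). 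With your bare $w$, there is nothing preventing $\sigma\vert_{\Sigma_0}$ from sending positive measure into the null set of boundary points where non-tangential convergence of $U$ to $u$ fails, so the trace identity is not yet secured. The simplest fix is the paper's: load the approximation into $w$ from the start (or, in the spirit of the paper's Proposition~2.5, set $w=\infty$ on the exceptional null sets); your sketch of ``approximating $U$ by smooth maps'' is the right instinct but needs this encoding to interact with the hypothesis $\int_{\Sigma_0} w\compose\sigma<\infty$.

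\textbf{The $\gamma^\lambda$ bookkeeping.} Rather than coarea on $d_0$ plus a covering, the paper's Lemma~2.2 inserts a harmless factor $1 = \fint_{\Sigma_0\cap B^\Sigma_{\tau d_0(y)}(y)}\dif z$, then swaps the order of integration so that the $y$--integral becomes $\int_{S_{x,z}} \abs{\Sigma_0\cap B^\Sigma_{\tau d_0(y)}(y)}^{-1}\dif y$, which is bounded directly by $\gamma^\lambda_{\Sigma_0,\Sigma}$ times $x_m$ after one application of the defining supremum. This yields the estimate with the \emph{prescribed} $\lambda$ (after choosing $\tau$, $\eta$, $\rho$ explicitly) without any covering multiplicity constants; your coarea/covering route should also work but will produce extra geometric factors that then have to be absorbed.
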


The conclusion of \cref{theorem_main_halfspace} where the complex \(\Sigma\) has arbitrary -dimension is slightly stronger than \ref{it_aithesh6rea9ouGaib0Dae2f} in \cref{theorem_main_halfspace} where the dimension of \(\Sigma\) is \(\floor{p}\).

Here and in the sequel, for $0<\eta<1$ and \(\rho > 0\) we define the \emph{solid spherical cap}
\begin{equation}\label{eq:solidsphericalcup}
 C^\eta_\rho \defeq  \{x = (x', x_m) \in\R^m\colon \abs{x}< \rho \text{ and } x_m > \eta \rho\}
 = B_\rho \cap (\R^{m - 1} \times (\eta \rho, \rho))
\end{equation}
and note that 
\[
 \abs{C_\rho^{\eta}} = \rho^m \abs{C_1^\eta}.
\]
The main ingredient of the proof is the following integration lemma.
\begin{lemma}
\label{le:integration_2}
Let $0<\eta<1$, $\lambda>\frac{1}{\eta}$, and $\rho>0$. Assume that \(F \colon \R^m_+ \to [0, \infty]\) is a Borel--measurable function, \(\Sigma\) is a finite simplicial complex, and \({\Sigma_0} \subset \Sigma\) is a homogeneous subcomplex of codimension one.  Assume moreover that \(\sigma \colon \Sigma \to \oline{\R}^m_+\) is a Lipschitz--continuous map with $\sigma(\Sigma_0)\subseteq \partial \R^m_+$ and  \(\seminorm{\sigma}_{\mathrm{Lip}}< (\eta \lambda - 1) \rho \).
Then we have
\begin{equation}
\label{eq_aijalanguGh8oochoh2wuel6}
\begin{split}
  \int\limits_{C^{\eta}_{\rho}} \biggl(\int\limits_{\Sigma} F\compose (\sigma +  d_0\xi ) &\biggr)\dif \xi\\
  &\le \frac{(\eta \lambda - 1)^m(\rho + \seminorm{\sigma}_{\mathrm{Lip}})^{2 m}}{\eta^m \brk*{(\eta \lambda  - 1) \rho - \seminorm{\sigma}_{\mathrm{Lip}}}^{m + 1}}
  \gamma^{\lambda}_{\Sigma_0, \Sigma}
  \int\limits_{{\Sigma_0}} \int\limits_{\R^m_+} \frac{F (x) x_m}{\abs{x - \sigma (z)}^m}\dif x \dif z.
\end{split}
  \end{equation}
\end{lemma}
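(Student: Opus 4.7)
The plan is to reduce \eqref{eq_aijalanguGh8oochoh2wuel6} to a pointwise bound via Fubini and a change of variables, then to produce this pointwise bound through a nearest-point projection onto $\Sigma_0$ and an averaging via the Ahlfors-type condition \eqref{eq:Alhforslike}. First I would interchange the two integrals on the left-hand side and, for each $y\in\Sigma$ with $d_0(y)>0$, substitute $x=\sigma(y)+d_0(y)\xi$ in the inner integral. Since this affine change of variables is a bijection from $C^\eta_\rho$ onto $\sigma(y)+d_0(y)C^\eta_\rho$ with Jacobian $d_0(y)^m$, and since the subcomplex $\Sigma_0=\{d_0=0\}$ has codimension one in $\Sigma$ and so contributes nothing, after a final Fubini swap the left-hand side becomes
\[
 \int_{\R^m_+} F(x)\, I(x)\dif x, \qquad I(x)\defeq \int_\Sigma \frac{\mathbf{1}_{\{x\in\sigma(y)+d_0(y)C^\eta_\rho\}}}{d_0(y)^m}\dif y.
\]
It therefore suffices to produce a pointwise estimate of the form $I(x) \le C\, x_m \int_{\Sigma_0} |x-\sigma(z)|^{-m}\dif z$ with $C$ equal to the coefficient in \eqref{eq_aijalanguGh8oochoh2wuel6}.

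Second, for each $y$ contributing to $I(x)$, I would write $\delta=d_0(y)>0$ and pick $z_y\in\Sigma_0$ with $d^\Sigma(y,z_y)=\delta$, which exists by compactness of the finite subcomplex $\Sigma_0$. The defining inequalities of $C^\eta_\rho$ give $|x-\sigma(y)|<\delta\rho$ and $x_m \ge \sigma(y)_m + \eta\rho\delta \ge \eta\rho\delta$ (using $\sigma(y)_m \ge 0$), so in particular $\delta \le x_m/(\eta\rho)$. The Lipschitz property of $\sigma$ propagates these bounds to every $z \in \Sigma_0 \cap B^\Sigma_{\beta\delta}(z_y)$ (with $\beta>0$ to be chosen): one has $|\sigma(y)-\sigma(z)| \le \seminorm{\sigma}_\lip(1+\beta)\delta$ and hence
\[
 |x-\sigma(z)| \le \delta\bigl(\rho + \seminorm{\sigma}_\lip(1+\beta)\bigr).
\]
Splitting $\delta^{-m}=\delta^{-(m+1)}\cdot\delta$, using $\delta \le x_m/(\eta\rho)$ in one factor and the displayed bound on $|x-\sigma(z)|$ in the other, and then absorbing one power via $|x-\sigma(z)|\ge x_m$, produces a pointwise bound $d_0(y)^{-m} \le C_1\, x_m/|x-\sigma(z)|^m$ for every such $z$.

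Third, I would average the last inequality in $z$ over $\Sigma_0 \cap B^\Sigma_{\beta\delta}(z_y)$ and invoke \eqref{eq:Alhforslike} centered at $z_y$ to obtain
\[
 \bigl|\Sigma_0\cap B^\Sigma_{\beta\delta}(z_y)\bigr|^{-1} \le \frac{\gamma^\lambda_{\Sigma_0,\Sigma}\,\beta\delta}{\bigl|B^\Sigma_{\lambda\beta\delta}(z_y)\bigr|},
\]
then exchange the order of integration to pass from $\int_\Sigma\dif y$ to $\int_{\Sigma_0}\dif z$. The resulting inner $y$-integral, for each fixed $z\in\Sigma_0$, is controlled using the inclusion $B^\Sigma_{\lambda\beta\delta}(z_y) \supset B^\Sigma_{(\lambda\beta-1)\delta}(y)$ (valid when $\lambda\beta\ge 1$), which provides the volume lower bound needed to close the estimate in terms of intrinsic quantities of $\Sigma$ and $\Sigma_0$.

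The main obstacle is the choice of $\beta$ and the bookkeeping of constants through this chain of inequalities. The constraint $\lambda\beta \ge 1$ is required to make the Ahlfors volume bound informative and the ball inclusion non-trivial; the complementary constraint $\seminorm{\sigma}_\lip(1+\beta) < \lambda\eta\rho$, which ensures that the Lipschitz error does not overwhelm the geometric scale of the cap, becomes, under the natural choice $\beta = \lambda - 1$, precisely the standing hypothesis $\seminorm{\sigma}_\lip < (\eta\lambda-1)\rho$ of the lemma. A careful accounting of the powers of $\rho+\seminorm{\sigma}_\lip$, $\eta\lambda-1$, $\eta$, and $(\eta\lambda-1)\rho-\seminorm{\sigma}_\lip$ generated along this argument then yields the exact coefficient displayed in \eqref{eq_aijalanguGh8oochoh2wuel6}.
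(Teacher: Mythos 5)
Your overall strategy coincides with the paper's: change variables via $x=\sigma(y)+d_0(y)\xi$, project each $y$ to a nearest point $z_y\in\Sigma_0$, replace the pointwise quantity by an average over $\Sigma_0\cap B^\Sigma_{\beta\delta}(z_y)$, apply the Ahlfors-type bound \eqref{eq:Alhforslike}, and swap $\int_\Sigma\,\mathrm{d}y$ with $\int_{\Sigma_0}\,\mathrm{d}z$ (your $1+\beta$ is the paper's $\tau$). But there is a gap that breaks the argument.

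You extract only the \emph{upper} bound $d_0(y)\le x_m/(\eta\rho)$ from the cone $C^\eta_\rho$, using $\sigma_m(y)\ge 0$. You never derive, and never use, the complementary \emph{lower} bound $d_0(y)\ge x_m/(\rho+\seminorm{\sigma}_{\lip})$, which is the actual content of the hypothesis $\sigma(\Sigma_0)\subseteq\partial\R^m_+$: since $\sigma_m$ vanishes on $\Sigma_0$, the Lipschitz bound gives $\sigma_m(y)\le\seminorm{\sigma}_{\lip}\,d_0(y)$, so $x_m\le|x_m-\sigma_m(y)|+\sigma_m(y)\le(\rho+\seminorm{\sigma}_{\lip})\,d_0(y)$. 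This lower bound is exactly what pins $d_0(y)$ to the scale $x_m$, so that after the Fubini swap the inner $y$-integral runs over a set where $d_0(y)\asymp x_m$. Without it, that set contains points $y$ arbitrarily close to $\Sigma_0$, the integrand $\delta/\lvert B^\Sigma_{\lambda\beta\delta}(z_y)\rvert$ blows up like $\delta^{2-\dim\Sigma}$ as $\delta\to 0$, and the resulting integral need not be of size $O(x_m)$ (indeed it can diverge once $\dim\Sigma\ge 3$). Even where it converges, what you would need to close the estimate is a doubling bound relating $\lvert B^\Sigma_{(1+\beta)\delta}(z)\rvert$ to $\lvert B^\Sigma_{(\lambda-1)\beta\delta}(z)\rvert$ at a single center and varying radii, which is not supplied by $\gamma^\lambda_{\Sigma_0,\Sigma}$: the lower bound on $\delta$ is what lets one apply \eqref{eq:Alhforslike} at the \emph{fixed} scale $\delta=(\tau-1)x_m/(\rho+\seminorm{\sigma}_{\lip})$, with $\lambda$ exactly the ratio of the two radii, and lets the factor $x_m$ (and only one power of it) fall out. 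Relatedly, your advertised pointwise bound $d_0(y)^{-m}\le C_1\,x_m/\abs{x-\sigma(z)}^m$ is false with a uniform $C_1$: following your own manipulations, the two appearances of $x_m$ cancel and one is left with $d_0(y)^{-m}\le C/\abs{x-\sigma(z)}^m$ without the extra $x_m$; the factor of $x_m$ in \eqref{eq_aijalanguGh8oochoh2wuel6} is not pointwise in origin, but comes from the measure of the annulus $x_m/(\rho+\seminorm{\sigma}_{\lip})\le d_0(y)\le x_m/(\eta\rho)$ in the $y$-integration — again the missing lower bound.
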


We recall that the quantity $\gamma^{\lambda}_{\Sigma_0, \Sigma}$ was defined in \eqref{eq:Alhforslike}; here and in the sequel we write \((x', x_m) = x \in \R^m_+ = \R^{m - 1} \times (0,\infty)\).

\begin{proof}%
[Proof of \cref{le:integration_2}]
By a change of variables
\(x = \sigma(y) +  d_0 (y)\xi\), with \(d_0\) having been defined in \eqref{eq_phuYoo0euhoi7shu5Saeng9}, and in view of \eqref{eq:solidsphericalcup} and of the non-negativity of the last component \(\sigma_m\) of \(\sigma \colon \Sigma \to \oline{\R}^m_+\), we have 
\begin{equation}\label{eq_oht6ooquu6caghooB9thahru}
\begin{split}
 \int_{C^{\eta}_\rho} \brk*{\int_{\Sigma} F\compose \brk*{\sigma +  d_0 \xi}}\dif \xi
 &= \int\limits_{\Sigma} \brk[\Bigg]{\hspace{.3em}\smashoperator[r]{\int\limits_{\substack{\abs{x - \sigma (y)} \le \rho  d_0 (y)\\ x_m \ge \rho\eta d_0 (y) + \sigma_m (y)}}} \frac{F (x)}{d_0 (y)^m}  \dif x }\dif y\\
 &\le \int\limits_{\Sigma} \brk[\Bigg]{\hspace{.3em}\smashoperator[r]{\int\limits_{\substack{\abs{x - \sigma (y)} \le \rho  d_0 (y)\\ x_m \ge \eta \rho d_0 (y) }} } \frac{F (x)}{d_0 (y)^m}  \dif x }\dif y\\
 &= \int\limits_{\Sigma} 
 \fint\limits_{{\Sigma_0} \cap B^\Sigma_{\tau d_0 (y)} (y)}  \Biggl(\hspace{.3em}\smashoperator[r]{\int\limits_{\substack{\abs{x - \sigma (y)} \le \rho  d_0 (y)\\ x_m \ge \eta \rho d_0 (y)}}}
 \frac{F (x)}{d_0 (y)^m}  \dif x \Biggr)\dif z \dif y,
\end{split}
\end{equation}
where $\tau>1$ is to be chosen later (see \eqref{eq_saubee2ohcoo2Uikiav0JooP} below). 

Noting that for every \(x \in \R^m_+\) satisfying $|x-\sigma(y)|\le \rho d_0(y)$ and \(z\in {\Sigma_0} \cap B^\Sigma_{\tau d_0 (y)} (y)\) we have
\[
\begin{split}
\abs{x - \sigma (z)} &\le \abs{x - \sigma (y)} + \abs{\sigma(y) - \sigma(z)} \\
&\le \abs{x - \sigma(y)} + \seminorm{\sigma}_{\mathrm{Lip}} d^\Sigma (z, y)
\le (\rho  + \seminorm{\sigma}_{\mathrm{Lip}}\tau ) d_0 (y), 
\end{split}
\]
and since, by assumption, \(\sigma_m (y) \ge 0\) and $\sigma(\Sigma_0)\subseteq \partial \R^m_+$ we also have
\[
 x_m \le \abs{x_m - \sigma_m (y)} + \sigma_m (y)
 \le (\rho + \seminorm{\sigma}_{\mathrm{Lip}}) d_0 (y). 
\]
Thus, we estimate by \eqref{eq_oht6ooquu6caghooB9thahru}
\begin{equation}
\label{eq:Festimateone}
\begin{split}
  \int_{C^\eta_\rho}   &\brk*{\int_{\Sigma} F\compose (\sigma + d_0\xi )}\dif \xi\\
  &\le \brk*{\rho + \seminorm{\sigma}_{\mathrm{Lip}}}^m \int\limits_{\Sigma_0} \smashoperator[r]{\int\limits_{\abs{x - \sigma (z)} \le \frac{\rho + \seminorm{\sigma}_{\mathrm{Lip}}\tau}{\kappa} x_m}} \quad \frac{F (x)}{x_m^m}
  \brac{\int\limits_{S_{x,z}} \frac{1}{\abs*{{\Sigma_0} \cap B^\Sigma_{\tau d_0 (y)} (y)}} \dif y  } \dif x \dif z,
\end{split}
\end{equation}
with the set \(S_{x,z} \subseteq \Sigma\) being defined as 
\begin{equation*}
S_{x, z} \defeq \left\{y \in \Sigma : d^\Sigma(y, z) \le \tau d_0 (y) \text{ and } \frac{x_m}{K} \le d_0 (y) \le \frac{x_m}{\kappa}\right\}
\end{equation*}
and
\begin{align}
\label{eq_hahpheiwoukueph3ii7yoPhi}
 K  &\defeq \rho + \seminorm{\sigma}_{\mathrm{Lip}}, &
 \kappa &\defeq  \eta \rho.
\end{align}

We estimate now the innermost integral of the right-hand side of \eqref{eq:Festimateone}.
Since the set \(\Sigma_0\) is compact, for every \(y \in \Sigma\), there exists a point \(y_0 \in {\Sigma_0}\) such that \(d_0 (y) = d^{\Sigma} (y, y_0)\). Thus, if \(d_0 (y) \ge x_m/K\), we have
\begin{equation}
\label{eq_puuleek4IemaeFi9oh0Iewao}
 \abs[\big]{{\Sigma_0} \cap B^\Sigma_{\tau d_0 (y)} (y)                                      } \ge \abs[\big]{{\Sigma_0} \cap B^\Sigma_{(\tau - 1) d_0 (y)} (y_0)}
 \ge \abs[\big]{{\Sigma_0} \cap B^\Sigma_{(\tau - 1) x_m/K} (y_0)}.
\end{equation}
Moreover, 
\begin{equation}
\label{eq_Xiaquob6vaen8gei8ek6yohm}
 S_{x, z} \subseteq B^{\Sigma}_{(\tau x_m)/\kappa} (z).
\end{equation}
It follows thus from \eqref{eq_puuleek4IemaeFi9oh0Iewao} and \eqref{eq_Xiaquob6vaen8gei8ek6yohm} that for every \(x \in \R^m_+\) and \(z \in {\Sigma_0}\)
\begin{equation}\label{eq:innerest}
\begin{split}
\int\limits_{S_{x, z}} \frac{\dif y }{\abs{\Sigma_0 \cap B^\Sigma_{\tau d_0 (y)} (y)) }} 
&\le \frac{(\tau - 1)x_m}{K}\sup_{y_0 \in {\Sigma_0}} \frac{\abs{B^\Sigma_{(\tau x_m)/\kappa} (y_0)} }
{\frac{(\tau - 1) x_m }{K} \abs{{\Sigma_0} \cap B^\Sigma_{(\tau - 1)x_m/K} (y_0)}}\\
&\le \frac{(\tau-1) x_m}{K} \sup_{\substack{y_0\in{\Sigma_0} \\ \delta>0} } \frac{\big|B^\Sigma_{\frac{\tau}{\tau -1} \frac{K}{\kappa}\delta}(y_0)\big|}{\delta \abs{{\Sigma_0} \cap B^\Sigma_{\delta}(y_0)}}.
\end{split}
\end{equation}
Recalling that by assumption $(\eta \lambda   - 1) \rho - \seminorm{\sigma}_{\mathrm{Lip}}>0$, we set
\begin{equation}
\label{eq_saubee2ohcoo2Uikiav0JooP}
 \tau \defeq \frac{\eta \lambda \rho}{(\eta \lambda   - 1) \rho - \seminorm{\sigma}_{\mathrm{Lip}}},
\end{equation}
so that one can directly check that in view of \eqref{eq_hahpheiwoukueph3ii7yoPhi}
\begin{align*}
 \frac{\tau}{\tau -1} \frac{K}{\kappa} &= \lambda, 
 &\frac{\tau -1}{K} &= \frac{1}{(\eta \lambda   - 1) \rho - \seminorm{\sigma}_{\mathrm{Lip}}},&
 \frac{\rho + \seminorm{\sigma}_{\mathrm{Lip}}\tau}{\kappa} &= \frac{(\rho + \seminorm{\sigma}_{\mathrm{Lip}})(\eta \lambda - 1)}{\eta ((\eta \lambda  - 1) \rho - \seminorm{\sigma}_{\mathrm{Lip}})}.
\end{align*}
Moreover, setting $\theta \defeq \frac{\rho + \seminorm{\sigma}_{\mathrm{Lip}}\tau}{\kappa} = \frac{(\eta \lambda  - 1)(\rho + \seminorm{\sigma}_{\mathrm{Lip}})}{\eta ((\eta \lambda  - 1) \rho - \seminorm{\sigma}_{\mathrm{Lip}})}$ we get
\begin{equation}
\label{eq:integralFoverxm}
\int\limits_{\substack{x \in \R^m_+\\ 
\abs{x - \sigma(z)} \le \theta x_m}} \frac{F (x)}{x_m^{m - 1}} \dif x
\le \theta^m \int_{\R^m_+} \frac{x_m F (x)}{\abs{x - \sigma(z)}^m}\dif x.
\end{equation} 
Combining \eqref{eq:Festimateone} with \eqref{eq:innerest} and \eqref{eq:integralFoverxm} we conclude. 
\end{proof}

\begin{proof}%
[Proof of \cref{pr:trace_trails_necessary_2}]%
\resetconstant
Without loss of generality, we assume that 
\[
 \int_{\R^m_+} \vert \deriv U\vert^p > 0.
\]
We consider a sequence \((U_j)_{j \in \N}\) in \(C^\infty (\oline{\R}^m_+, \R^\nu)\) such that for each \(j \in \N\)
\begin{gather}
 \label{eq_shoo2EZeepuquureeKahr4So} \int_{\R^m_+} \abs{\deriv U_j - \deriv U}^p \le \frac{1}{2^j},\\
 \label{eq_ahThaiMie7vohru0phai2Aa3}
 \int_{B^m_j (0)} \abs{U_j - U}^p \le \frac{1}{2^j},\\
 \label{eq:Ujconvergenceonboundary}
\int_{B^{m - 1}_j (0)} \abs{U_j - u}^p \le \frac{1}{2^j},
\end{gather}
where $u\coloneqq \tr_{\partial \R^m_+} U$. Defining the function \(W \colon \R^m_+ \to [0, \infty]\) by 
\begin{equation}
\label{eq_wiuw3je7aroo3eeG9vaez4ah}
W \defeq \abs{\deriv U}^p
+ 
\brk*{\sum_{j \in \N} \abs{\deriv U_j - \deriv U}^p + \sum_{j \in \N} \chi_{\R^m_+\cap B^m_j(0)}\abs{U_j - U}^p}\int_{\R^m_+} \abs{\deriv U}^p ,
\end{equation}
we have by \eqref{eq_wiuw3je7aroo3eeG9vaez4ah}, \eqref{eq_shoo2EZeepuquureeKahr4So}, and \eqref{eq_ahThaiMie7vohru0phai2Aa3}
\begin{equation}
\label{eq_Shieh7eef4vah6TadaYiekah}
\begin{split}
 \int_{\R^m_+} W &= \int_{\R^m_+} \abs{\deriv U}^p \brk*{1 + \sum_{j \in \N} \int_{\R^m_+}\brac{|\deriv U_j - \deriv U|^p + \chi_{\R^m_+ \cap B^m_j(0)}|U_j - U|^p} } \\
 & \le  \int_{\R^m_+} \abs{\deriv U}^p \brk*{1 + 2\sum_{j \in \N} \frac{1}{2^{j}}} = 5 \int_{\R^m_+} \abs{\deriv U}^p.\ 
\end{split}
\end{equation}
We define the function \(w \colon \partial \R^{m}_+ \to [0,\infty]\) for each $y\in \partial \R^{m}_+$ by
\begin{equation}
\label{eq:defH}
 w(y) 
 \defeq 
 \int_{\R^m_+} \frac{W (x)x_m}{\abs{x - y}^m}\dif x +  \sum_{j \in \N} \chi_{B^{m - 1}_j (0)}\abs {U_j - u}^p \int_{\R^m_+} \abs{\deriv U}^p.
\end{equation}
We have by \eqref{eq_Shieh7eef4vah6TadaYiekah} and \eqref{eq:defH}
\begin{equation}
\label{eq_tha1Sa8eitheis1lash3wi2b}
\begin{split}
  \int_{\partial \R^{m}_+} \int_{\R^m_+} \frac{W (x)x_m}{\abs{x - y}^m} \dif x \dif y
 &= \int_{\R^m_+} W (x) \int_{\partial \R^{m}_+} \frac{x_m}{\abs{x - y}^m} \dif y \dif x \\
&= \int_{\partial \R^{m}_+} \frac{1}{(\abs{y}^2 + 1)^\frac m2} \dif y
\int_{\R^m_+} W \le \C \int_{\R^m_+} \abs{\deriv U}^p.
\end{split}
\end{equation}
We also have by 
\eqref{eq:Ujconvergenceonboundary}
\begin{equation}
\label{eq_imoo2ieVie3Is2vui7Cahriu}
 \sum_{j \in \N} \int_{B^{m - 1}_j (0)}\abs{U_j - u}^p
 < \infty,
\end{equation}
so that in view of \eqref{eq:defH},  \eqref{eq_tha1Sa8eitheis1lash3wi2b}, and \eqref{eq_imoo2ieVie3Is2vui7Cahriu}
\begin{equation*}
\label{eq:Hintegrable}
 \int_{\partial \R^{m}_+} w
 \le \C \int_{\R^m_+} \abs{\deriv U}^p <\infty.
\end{equation*}

We assume now that \(\sigma \colon \Sigma \to \oline{\R}^m_+\) is Lipschitz--continuous, that \(\sigma(\Sigma_0)\subset \partial \R^m_+\), and that 
\[
 \int_{\Sigma_0} w \compose\sigma  < \infty.
\]
For each \(\xi \in C^{\eta}_\rho\), we define the map
\begin{equation}
\label{eq_aiqu4quohn9aeSheeghoeshi}
 \sigma_{\xi}  \defeq \sigma  + d_0 \xi
 \colon \Sigma \to \oline{\R}^m_+.
\end{equation}
We claim that the map \(V \defeq U \compose \sigma_\xi\) satisfies the conclusion for a suitable \(\xi \in C^{\eta}_\rho\).

Indeed, by Lemma~\ref{le:integration_2} with 
\begin{equation}\label{eq:choiceofrho}
\rho = 2\seminorm{\sigma}_\lip/(\eta \lambda - 1),
\end{equation}
we first obtain 
\begin{equation*}
  \int_{C^{\eta}_\rho} \int_{\Sigma} W \compose \sigma_\xi \dif \mu \dif \xi\\
  \\
  \le 
 \frac{(\eta \lambda + 1)^{2 m}}{\eta^m(\eta \lambda - 1)^m}
  \seminorm{\sigma}_\lip^{m - 1}\,
  \gamma^{\lambda}_{\Sigma_0, \Sigma}
  \int\limits_{\Sigma_0} w \compose \sigma.
 \end{equation*}
Since $|C_\rho^\eta| = \rho^m |C_1^\eta|$,  there exists a \(\xi \in C_\rho^{\eta}\) such that
\begin{equation}
\label{eq_lae5shaevao3eepheikee5Ce}
\int_{\Sigma} W \compose \sigma_\xi \dif \mu
\le   \frac{(\eta \lambda + 1)^{2 m}}{2^m \eta^m |C_1^\eta|}
   \frac{\gamma^{\lambda}_{\Sigma_0, \Sigma}}{\seminorm{\sigma}_\lip}
  \int\limits_{\Sigma_0} w \compose \sigma;
\end{equation}
we fix such a \(\xi\) for the remainder of the proof.
Since we have assumed that \(\int_{\R^m_+} \abs{\deriv U}^p > 0\), \eqref{eq_lae5shaevao3eepheikee5Ce} implies in view of \eqref{eq_wiuw3je7aroo3eeG9vaez4ah} that 
\begin{multline}
\label{eq_id2phiga6jaefiereiCheiR8}
 \sum_{j \in\N} \int_\Sigma \abs{\deriv U_j(\sigma_\xi (y)) - \deriv U(\sigma_\xi (y))}^p \dif y\\
 + \int_{\Sigma}\chi_{\R^m_+\cap B^m_j(0)} (\sigma_\xi(y)) \abs{U_j(\sigma_\xi (y)) - U(\sigma_\xi (y))}^p \dif y <\infty.
\end{multline}
By Lipschitz--continuity of $\sigma_\xi$ and smoothness of $U_j$, we have \(\deriv (U_j \compose \sigma_\xi) = \deriv U_j (\sigma_\xi) \cdot \deriv \sigma_\xi\) almost everywhere (here, by $\cdot$ we mean the composition of differential as linear mappings, or equivalently, the multiplication of the Jacobian matrices) and thus by \eqref{eq_id2phiga6jaefiereiCheiR8}
\begin{multline}
\label{eq_phooriutahw2Ruuxuoyohd8e}
\sum_{j \in\N} \int_\Sigma \abs{\deriv (U_j \compose \sigma_\xi) - \deriv U (\sigma_\xi) \cdot \deriv \sigma_\xi}^p \dif y \\
\le 
\sum_{j \in\N} \int_\Sigma \abs{\deriv U_j(\sigma_\xi(y)) - \deriv U (\sigma_\xi(y))}^p\abs{\deriv \sigma_\xi}^p \dif y 
< \infty.
\end{multline}
Thus, by \eqref{eq_id2phiga6jaefiereiCheiR8} and \eqref{eq_phooriutahw2Ruuxuoyohd8e}, we have 
\[
 \lim_{j\to \infty} \int_{\Sigma} \abs{U_j \compose \sigma_\xi - U\compose \sigma_\xi}^p +
 \int_{\Sigma}\abs{\deriv \brac{U_j \compose \sigma_\xi} - \deriv U(\sigma_\xi) \cdot \deriv \sigma_\xi}^p = 0, 
\]
this implies that $U\compose \sigma_\xi \in \dot{W}^{1,p}(\Sigma,\n)$ and $\deriv (U \compose \sigma_\xi) = \deriv U (\sigma_\xi)\cdot \deriv \sigma_\xi$. Finally, since by definition of \(w\) we have
\[
 \sum_{j \in \N} \int_{\Sigma_0} (\chi_{B^{m - 1}_j (0)} \compose \sigma_\xi) \abs{U_j \compose \sigma_\xi - u \compose \sigma_\xi}^p < \infty,
\]
and hence \(U_j \compose \sigma_\xi \to u \compose \sigma_\xi\) in $L^p_{\mathrm{loc}} (\Sigma_0)$. By continuity of the trace, \(\tr_{\Sigma_0} U \compose \sigma_\xi = u \circ \sigma_\xi\big\vert_{\Sigma_0}\).

In order to conclude, we note that since $\rho =2\seminorm{\sigma}_\lip/(\eta \lambda - 1)$ and  \(\xi\in C_\rho^\eta\), we have
\begin{align*}
 |\deriv \sigma_\xi| &\le \abs{\deriv \sigma} + \abs{\xi} &
 &\text{ and} &
 |\xi| &\le \frac{2\seminorm{\sigma}_\lip}{(\eta \lambda - 1)}.
\end{align*}
Thus,
\[
 \int_{\Sigma }|\deriv (U \compose \sigma_\xi)|^p   \le |\sigma_\xi|_\lip^p \int_{\Sigma}|\deriv U \compose \sigma_\xi|^p
 \le \frac{\seminorm{\sigma}^p_{\lip}(\eta \lambda + 1)^p}{(\eta \lambda - 1)^p} \int_{\Sigma}|\deriv U \compose \sigma_\xi|^p,
\]
which gives, by \eqref{eq_lae5shaevao3eepheikee5Ce},
\[
\int_{\Sigma }|\deriv (U \compose \sigma_\xi)|^p 
\le 
 \frac{(\eta \lambda + 1)^{2 m+p}}{2^m \eta^m (\eta\lambda-1)^p |C_1^\eta|}
  \seminorm{\sigma}_\lip^{p - 1} \gamma^{\lambda}_{\Sigma_0, \Sigma}
 \int\limits_{\Sigma_0} w \compose \sigma.
\]
We take \(\eta \defeq \frac{\lambda + 1}{2\lambda}\) and multiply \(w\) by a suitable constant, so that \eqref{eq:estimateofUonskeleton} holds.
\end{proof}

\subsection{From paths to cubical meshes}\label{ss:2implies3}

The implication \ref{it_aithesh6rea9ouGaib0Dae2f} \(\implies \) \ref{it_ap6kahs3enainah2lohz9ET2} in \cref{theorem_main_halfspace} will follow from the next proposition.

\begin{proposition}
\label{pr:(2)to(3)}
Let \(2 \le m \in \N\), \(\lambda > 1\), \(p\in [1,\infty)\), \(\ell \in \{1, \dotsc, m\}\) and let $u\colon \partial \R^m_+\to \n$ be a Borel--measurable map.
Assume further that \(w \colon \partial \R^{m}_+ \to [0, \infty]\) is a Borel--measurable function, such that for every finite homogeneous simplicial complex $\Sigma$ of dimension \(\ell\), any subcomplex $\Sigma_0 \subset \Sigma$ of dimension \(\ell - 1\), and every  Lipschitz--continuous map \(\sigma \colon \Sigma \to \partial \R^{m}_+\) satisfying 
\[
\int_{\Sigma_0} w \compose \sigma < \infty, 
\]
there exists a map \(W \in  \dot{W}^{1, p} (\Sigma, \n)\) with \(\tr_{\Sigma_0}W= u \compose \sigma \big\vert_{\Sigma_0}\) almost everywhere on \(\Sigma_0\) with the estimate 
\begin{equation}\label{eq:estimateofUonskeleton2}
  \int_{\Sigma} \abs{\deriv W}^p \\
  \le 
  \gamma^{\lambda}_{\Sigma_0, \Sigma}\, \seminorm{\sigma}_\lip^{p-1}
  \int_{\Sigma_0}  w \compose \sigma,
\end{equation}
where the quantity $\gamma^{\lambda}_{\Sigma_0, \Sigma} $ is defined in \eqref{eq:Alhforslike}.

Then, there exists a constant \(C>0\), such that for given \(h \in \partial \R^{m}_+\) and \(\kappa > 0\) for which
\[
 \smashoperator{\int_{\mathcal{C}^{\kappa, \ell - 1}_0 + h}}
 w < \infty,
\]
there exists a map \(V \in \dot{W}^{1, p} (\mathcal{C}^{\kappa, \ell}_+ + h, \n)\) satisfying \(\tr_{\mathcal{C}^{\kappa, \ell - 1}_0 + h} V = u\big\vert_{\mathcal{C}^{\kappa, \ell - 1}_0 + h}\) with the estimate 
 \[
   \smashoperator{\int_{\mathcal{C}^{\kappa, \ell}_+ + h}} \abs{\deriv V}^p \le C 
    \smashoperator{\int_{\mathcal{C}^{\kappa, \ell - 1}_0 + h}} w.
 \]
\end{proposition}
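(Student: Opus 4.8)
The strategy is to feed the cubical complex $\mathcal{C}^{\kappa, \ell}_+ + h$, which lives in $\oline{\R}^m_+$ rather than in $\partial \R^m_+$, to the hypothesis by composing it with the vertical projection $\pi \colon \R^m \to \partial \R^m_+$, $\pi(x', x_m) \defeq (x', 0)$, which collapses it onto the boundary, and then to arrange the scalings so that all the powers of $\kappa$ cancel. Concretely, I would fix once and for all a triangulation of the polyhedral complex $\mathcal{C}^{\kappa, \ell}_+ + h$: refining each straddling $\ell$--cell by the hyperplane $\{x_m = 0\}$ and triangulating the resulting full and half $\ell$--boxes in a compatible way, one obtains a homogeneous simplicial complex $\Sigma$ of dimension $\ell$ containing, as a subcomplex $\Sigma_0$ of dimension $\ell - 1$, a triangulation of $\mathcal{C}^{\kappa, \ell - 1}_0 + h$, together with a piecewise--affine bi--Lipschitz homeomorphism $\Psi \colon \Sigma \to \mathcal{C}^{\kappa, \ell}_+ + h$. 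Because the cubical mesh is $\kappa \Z^m$--periodic and $h \in \partial \R^m_+$, the combinatorial types of $\Sigma$ and $\Sigma_0$ can be chosen independently of $\kappa$ and $h$, and $\Psi$ expands lengths by a factor comparable, up to constants depending only on $m$ and $\ell$, to $\kappa$; in particular $\seminorm{\Psi}_{\lip} \le C \kappa$, $\seminorm{\Psi^{-1}}_{\lip} \le C / \kappa$, and the $j$--dimensional Jacobians of the affine pieces of $\Psi$ are comparable to $\kappa^j$. I then set $\sigma \defeq \pi \compose \Psi \colon \Sigma \to \partial \R^m_+$, so that $\seminorm{\sigma}_{\lip} \le \seminorm{\Psi}_{\lip} \le C \kappa$; since $\Psi (\Sigma_0) = \mathcal{C}^{\kappa, \ell - 1}_0 + h \subset \{x_m = 0\}$, the projection acts trivially on $\Psi(\Sigma_0)$, hence $\sigma \big\vert_{\Sigma_0} = \Psi \big\vert_{\Sigma_0}$, and a change of variables on each $(\ell - 1)$--simplex yields $\int_{\Sigma_0} w \compose \sigma \le C \kappa^{-(\ell - 1)} \int_{\mathcal{C}^{\kappa, \ell - 1}_0 + h} w < \infty$.

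The one genuine difficulty is that $\Sigma$ is infinite, whereas the hypothesis applies only to finite complexes, so I would argue by exhaustion. Writing $\Sigma = \bigcup_{n \in \N} \Sigma^{(n)}$ as an increasing union of finite homogeneous subcomplexes with $\Sigma_0^{(n)} \defeq \Sigma_0 \cap \Sigma^{(n)}$ pure of dimension $\ell - 1$ (for instance the closures of the $\ell$--simplices meeting a growing coordinate box), and setting $\sigma^{(n)} \defeq \sigma \big\vert_{\Sigma^{(n)}}$, each triple $(\Sigma^{(n)}, \Sigma_0^{(n)}, \sigma^{(n)})$ satisfies the hypotheses, with $\seminorm{\sigma^{(n)}}_{\lip} \le \seminorm{\sigma}_{\lip}$, with $\int_{\Sigma_0^{(n)}} w \compose \sigma^{(n)} \le \int_{\Sigma_0} w \compose \sigma < \infty$, and --- crucially --- with $\gamma^{\lambda}_{\Sigma_0^{(n)}, \Sigma^{(n)}}$ bounded by a constant depending only on $m$, $\ell$ and $\lambda$, again by the periodicity of the mesh. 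The hypothesis then produces maps $W^{(n)} \in \dot{W}^{1, p} (\Sigma^{(n)}, \n)$ with $\tr_{\Sigma_0^{(n)}} W^{(n)} = u \compose \sigma \big\vert_{\Sigma_0^{(n)}}$ and $\int_{\Sigma^{(n)}} \abs{\deriv W^{(n)}}^p \le C \kappa^{p - 1} \kappa^{-(\ell - 1)} \int_{\mathcal{C}^{\kappa, \ell - 1}_0 + h} w = C \kappa^{p - \ell} \int_{\mathcal{C}^{\kappa, \ell - 1}_0 + h} w$, uniformly in $n$. Since $\n$ is compact, $(W^{(n)})_n$ is bounded in $W^{1, p}$ on every compact subset of $\Sigma$; by Rellich's theorem and a diagonal extraction, after passing to a subsequence $W^{(n)} \to W$ strongly in $L^p_{\loc} (\Sigma)$ and $\deriv W^{(n)} \rightharpoonup \deriv W$ weakly in $L^p_{\loc} (\Sigma)$ for some $W \in \dot{W}^{1, p} (\Sigma, \n)$, whence $\int_{\Sigma} \abs{\deriv W}^p \le \liminf_n \int_{\Sigma^{(n)}} \abs{\deriv W^{(n)}}^p \le C \kappa^{p - \ell} \int_{\mathcal{C}^{\kappa, \ell - 1}_0 + h} w$ by weak lower semicontinuity; and, since on any fixed compact set $\tr_{\Sigma_0^{(n)}} W^{(n)}$ is eventually equal to the fixed map $u \compose \sigma \big\vert_{\Sigma_0}$ while $W^{(n)} \rightharpoonup W$ in $W^{1, p}$ there, weak continuity of the trace operator on each finite subcomplex gives $\tr_{\Sigma_0} W = u \compose \sigma \big\vert_{\Sigma_0}$ almost everywhere on $\Sigma_0$.

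It remains to transfer $W$ back to $\mathcal{C}^{\kappa, \ell}_+ + h$ by setting $V \defeq W \compose \Psi^{-1} \in \dot{W}^{1, p} (\mathcal{C}^{\kappa, \ell}_+ + h, \n)$. The change of variables formula for the $\ell$--dimensional Dirichlet energy under the piecewise--affine map $\Psi$, whose affine pieces have $\ell$--dimensional Jacobian comparable to $\kappa^\ell$ and inverse differential comparable to $\kappa^{-1}$, gives $\int_{\mathcal{C}^{\kappa, \ell}_+ + h} \abs{\deriv V}^p \le C \kappa^{\ell - p} \int_{\Sigma} \abs{\deriv W}^p \le C \int_{\mathcal{C}^{\kappa, \ell - 1}_0 + h} w$, the powers of $\kappa$ cancelling exactly --- which is the whole point of the scale--invariant formulation. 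Finally, $\sigma \big\vert_{\Sigma_0} = \Psi \big\vert_{\Sigma_0}$ yields $\tr_{\mathcal{C}^{\kappa, \ell - 1}_0 + h} V = \brk*{u \compose \sigma \big\vert_{\Sigma_0}} \compose \Psi^{-1} = u \big\vert_{\mathcal{C}^{\kappa, \ell - 1}_0 + h}$ almost everywhere, which completes the proof. I expect the main obstacle to be exactly the passage from the finite complexes allowed by the hypothesis to the infinite complex $\mathcal{C}^{\kappa, \ell}_+ + h$ with a uniform energy bound --- which forces one to verify that $\gamma^{\lambda}_{\Sigma_0^{(n)}, \Sigma^{(n)}}$ stays controlled along the exhaustion --- together with the care needed to make the triangulation and all the associated geometric constants genuinely independent of $\kappa$ and $h$; the remainder is bookkeeping of scalings and a standard weak--compactness argument.
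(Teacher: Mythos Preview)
Your proposal is correct and follows essentially the same approach as the paper. The only cosmetic difference is that the paper works directly with the unit-scale cubication $\Sigma^j \defeq \{Q \in \mathcal{Q}^{1,\ell}_+ : Q \subset [-j,j]^m\}$ and builds the scaling and projection into a single map $\sigma^j_{\kappa,h}(y',y_m) \defeq \kappa y' + h$, whereas you triangulate at scale $\kappa$ and factor through a separate bi-Lipschitz map $\Psi$; the uniform bound on $\gamma^\lambda_{\Sigma_0^{(n)},\Sigma^{(n)}}$, the weak-compactness limit, and the scaling bookkeeping are identical in substance.
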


The cubications $\mathcal C_+^{\kappa, \ell}$ and $\mathcal{C}^{\kappa, \ell - 1}_0$ were defined in \eqref{eq:cubicationdefintion1}--\eqref{eq:cubicationdefintion3}.

It follows from \cref{pr:(2)to(3)} that \ref{it_aithesh6rea9ouGaib0Dae2f} implies \ref{it_ap6kahs3enainah2lohz9ET2} in \cref{theorem_main_halfspace}, since by Fubini's theorem
\[
{\int_{[0, \kappa]^{m - 1}}}
 {\int_{\mathcal{C}^{\kappa, \ell - 1}_0 + h}} w(x') \dif x' \dif h
 = \kappa^{m - \ell}
 \int_{\R^{m - 1}}w .
\]

\begin{proof}%
[Proof of \cref{pr:(2)to(3)}]%
\resetconstant
We define for \(j \in \N\) the sets 
\begin{align*}
  \Sigma^j &\defeq \{Q \in \mathcal{Q}^{1, \ell}_+ \colon Q \subset [-j, j]^m\}&
  &
  \text{and }&
 \Sigma^j_0 &\defeq \{Q \in \mathcal{Q}^{1, \ell - 1}_0 \colon Q \subset [-j, j]^m\},
\end{align*}
where $\mathcal{Q}^{1, \ell}_+$ and $\mathcal{Q}^{1, \ell - 1}_0$ are defined in \eqref{eq:cubicationdefintion1}--\eqref{eq:cubicationdefintion2}.

By a classical realization of cubes as simplicial complexes, we can assume that \(\Sigma^j\) is a simplicial complex of dimension $\ell$ and \(\Sigma^j_0\) is a simplicial subcomplex of \(\Sigma^j\) of codimension 1. Moreover, we observe that for every \(\lambda > 1\),
\[
 \Cl{cst_gohc2aich4bei1roochiiHah} \defeq \sup_{j \in \N} \gamma^\lambda_{\Sigma^j_0, \Sigma^j}
 < \infty.
\]
For \(\kappa > 0\) and \(h \in [0, \kappa]^{m - 1}\), we define the Lipschitz maps  \(\sigma^j_{\kappa, h} \colon \Sigma^j \to \partial\R^m_+ \simeq \R^{m-1}\) by 
\(\sigma^j_{\kappa, h} (y) \defeq \kappa y' + h\), where for $y\in \Sigma^j$ we write $y=(y',y_m)\subset \R^{m-1}\times \R$.
By assumption, we have 
\begin{equation}
\label{eq_uujo1Aizi4Liec5ahNgaekee}
  \int_{\Sigma^j_0} w \compose \sigma^j_{\kappa, h} 
  \le \frac{1}{\kappa^{\ell - 1}} \int_{\mathcal{C}^{\kappa, \ell - 1}_0 + h} w <\infty.
\end{equation}
By assumption, there exists a map \(W^j \in \dot{W}^{1, p} (\Sigma^j, \n)\) such that \(\tr_{\Sigma^j_0} W^j = u \compose \sigma^j_{\kappa, h} \big\vert_{\Sigma^j_0}\) and 
\begin{equation}
\label{eq_oocheiDai0bu8iujei7beeBiA}
 \int_{\Sigma^j} \abs{\deriv W^j}^p \le \Cr{cst_gohc2aich4bei1roochiiHah} \kappa^{p-1} \int_{\Sigma^j_0} w \compose \sigma^j_{\kappa, h}.
\end{equation}
In view of \eqref{eq_uujo1Aizi4Liec5ahNgaekee} and \eqref{eq_oocheiDai0bu8iujei7beeBiA}, we have
\begin{equation}
\label{eq_oocheiDai0bu8iujei7beeBi}
\begin{split}
 \int_{\sigma^{j}(\Sigma^j)} \abs{\deriv W^j \compose (\sigma^j_{\kappa, h})^{-1}}^p 
 &= \frac{1}{\kappa^{p - \ell}}
 \int_{\Sigma^j} \abs{\deriv W^j}^p \\
 & \le \Cr{cst_gohc2aich4bei1roochiiHah} \kappa^{\ell - 1}\int_{\Sigma^j_0} w \compose \sigma^j_{\kappa, h} \le \Cr{cst_gohc2aich4bei1roochiiHah}\int_{\mathcal{C}^{\kappa, \ell - 1}_0 + h} w.
\end{split}
\end{equation}

In view of \eqref{eq_oocheiDai0bu8iujei7beeBi} and by weak compactness in Sobolev spaces, up to a subsequence, we have \(W^j \compose (\sigma^j_{\kappa, h})^{-1}\to V\) almost everywhere on \(\mathcal{C}^{\kappa, \ell}_+ + h\), where \(V \in \dot{W}^{1, p} (\mathcal{C}^{\kappa, \ell}_+ + h, \n)\), \(\tr_{\mathcal{C}^{\kappa, \ell - 1}_0 + h} V= u\big\vert_{\mathcal{C}^{\kappa, \ell - 1}_0 + h}\) and 
\[
    \smashoperator{\int_{\mathcal{C}^{\kappa, \ell}_+ + h}} \abs{\deriv V}^p \le \C
    \smashoperator{\int_{\mathcal{C}^{\kappa, \ell - 1}_0 + h}} w.\qedhere
\]
\end{proof}

\subsection{From cubical meshes to the half-space}\label{ss:3implies1}
We now prove the implication \ref{it_ap6kahs3enainah2lohz9ET2} \(\implies\) \ref{it_eiquoongeiJ8quahb5yu7hoo} in \cref{theorem_main_halfspace}. 

\begin{proposition}
\label{pr:(3)to(1)}
Assume that  $2 \le m \in \N$ and \(1 < p < m\).
There exists a constant \(C>0\) such that 
for every Borel--measurable map $u\colon \R^{m-1}\times\{0\} \to \n$, every \(\theta > 0\), every sequence \((\kappa_i)_{i \in \N}\) in \((0, \infty)\) converging to \(0\), and every sequence of sets \(H_j \subset [0, \kappa_j]^{m - 1}\), if 
 \begin{equation}
 \label{eq_ceiY9eigiu3shux2quootai7}
  \liminf_{j \to \infty} \frac{\abs{H_j}}{\kappa_j^{m - 1}} > 0 
 \end{equation}
 and if for each \(h \in H_j \times \{0\}\), 
 there exists \(V \in \dot{W}^{1, p} (\mathcal{C}^{\kappa_j, \floor{p}}_+ + h, \n)\) such that \(\tr_{{\mathcal{C}^{\kappa_j, \floor{p - 1}}_0 + h}} V = u \big\rvert_{\mathcal{C}^{\kappa_j, \floor{p - 1}}_0 + h}\) and 
 \begin{equation}
 \label{eq_ieghieTeengo3QuaiJiePoo4}
 \kappa_j^{m - \floor{p}} 
   \smashoperator{\int_{\mathcal{C}^{\kappa_j, \floor{p}}_+ + h}} \abs{\deriv V}^p \le \theta,
 \end{equation}
then there exists a mapping \(U \in \dot{W}^{1, p} (\R^m_+, \n)\) such that $\tr_{\R^{m-1}\times\{0\}} U = u$
and 
\begin{equation}
\label{eq_Uashe2lie4shiN5deitheeVa}
 \int_{\R^m_+} \abs{\deriv U}^p \le C \theta,
\end{equation}
where the cubications $\mathcal C_+^{\kappa_j, \lfloor p \rfloor}$ and $\mathcal{C}^{\kappa_j, \floor{p - 1}}_0$ are defined in \eqref{eq:cubicationdefintion1}--\eqref{eq:cubicationdefintion3}.
\end{proposition}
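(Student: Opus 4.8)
The plan is to build $U$ on $\R^m_+$ by gluing together, over a family of translated cubical meshes at each scale $\kappa_j$, the given extensions $V$ on the $\floor{p}$-skeletons, then filling in the higher-dimensional cells by homogeneous (degree-zero) extension from the skeleton toward cell centers, and finally concatenating these scale-$\kappa_j$ constructions in shrinking dyadic-type annuli so that the total energy is controlled by $C\theta$. Concretely, I would first fix one scale $\kappa=\kappa_j$ and one admissible translation $h\in H_j\times\{0\}$, and extend the map $V\in\dot W^{1,p}(\mathcal C^{\kappa,\floor p}_++h,\n)$ from the $\floor{p}$-skeleton to the full half-space cubication $\mathcal C^{\kappa,m}_+ +h=\oline\R^m_+$ cell by cell, in increasing order of dimension $\ell=\floor p+1,\dots,m$: on each closed $\ell$-cell $Q$ one uses the standard homogeneous extension $x\mapsto V\big(\pi_{x_Q}(x)\big)$ where $\pi_{x_Q}$ is the radial projection from the center $x_Q$ of $Q$ onto $\partial Q$. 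Since $\ell>p$, the Morrey-type computation $\int_Q \abs{\deriv(\text{hom.\ ext.})}^p \le C\,\kappa^{\ell-p}\!\int_{\partial Q}\abs{\deriv V}^p$ holds with a dimensional constant (this is where $p<\ell$ is essential — the radial factor $\abs{x-x_Q}^{p-\ell}$ is integrable near the center), and summing over all cells of $\mathcal C^{\kappa,\ell}_+ + h$ that lie in a bounded region, and then over $\ell$, yields
\[
 \int_{K} \abs{\deriv U_{\kappa,h}}^p \le C\,\kappa^{m-\floor p}\!\!\smashoperator{\int_{\mathcal C^{\kappa,\floor p}_+ + h}}\abs{\deriv V}^p \le C\theta
\]
on any fixed bounded set $K$, uniformly in the mesh; the boundary trace of $U_{\kappa,h}$ on $\partial\R^m_+$ agrees with $u$ on $\mathcal C^{\kappa,\floor{p-1}}_0 + h$, but \emph{not yet} everywhere on $\R^{m-1}\times\{0\}$.

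The point of averaging over $h\in H_j$ (using \eqref{eq_ceiY9eigiu3shux2quootai7}) is exactly to recover the trace $u$ almost everywhere on all of $\partial\R^m_+$: by Fubini, $\int_{H_j}\big(\int_{\partial\R^m_+\cap K}\dist(U_{\kappa_j,h}(x',0),\{u(x')\})^p\,\dif x'\big)\dif h$ is comparable (after accounting for the $(m-1)$-dimensional boundary skeleton having measure zero) to a quantity that does not force agreement — so instead I would use the averaging to select, for each $j$, a good translation $h_j\in H_j\times\{0\}$ and simultaneously exploit that the trace of the homogeneous extension on the top boundary cell is the homogeneous extension on $\partial\R^m_+$ from $\mathcal C^{\kappa_j,\floor{p-1}}_0$; letting $j\to\infty$ so $\kappa_j\to 0$, the boundary meshes $\mathcal C^{\kappa_j,\floor{p-1}}_0 + h_j$ become finer and finer and the trace converges to $u$ in $L^p_{\loc}(\partial\R^m_+)$. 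Then I would assemble the global $U$ on $\R^m_+$ by placing the scale-$\kappa_j$ construction on a slab $\R^{m-1}\times[a_{j+1},a_j)$ with $a_j\to 0$, interpolating across slab interfaces on thin transition layers whose energy is absorbed into the $C\theta$ bound (each layer costs at most a fixed multiple of the energy of the two adjacent constructions, and one chooses the slab thicknesses and a summable scheme so the geometric sum converges), and checking that near $x_m=0$ the trace is $u$ because the meshes refine, while away from the boundary the construction is Lipschitz on each slab so lies in $\dot W^{1,p}$ with values in $\n$.

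The main obstacle is the last assembly step: one must glue countably many locally-defined Sobolev maps — each living on a different cubical mesh at a different scale, each only a $W^{1,p}$ map (not continuous, since $p<m$) — into a single $\dot W^{1,p}(\R^m_+,\n)$ map whose trace is genuinely $u$ and whose total energy is $\le C\theta$ \emph{independently of the number of scales used}. This requires (i) a careful choice of the annuli/slabs $[a_{j+1},a_j)$ and of transition regions so that the per-scale energy $C\theta$ does \emph{not} simply sum to $\infty$ — this is achieved by reusing a \emph{single} scale (or finitely many scales) to cover a region of fixed measure and only refining near $\partial\R^m_+$, the refinement near the boundary contributing a convergent geometric series because the slab measures shrink; (ii) controlling the trace convergence $u\compose(\text{mesh projection})\to u$ in $L^p_{\loc}$, which follows from the Lebesgue differentiation theorem applied to $u\in L^p_{\loc}$ (finiteness of $\int_{\partial\R^m_+} w$ is not needed here, only measurability of $u$, since boundedness of $u$ into the compact $\n$ gives $u\in L^\infty_{\loc}$); and (iii) ensuring the transition layers between consecutive scales can themselves be filled with a $W^{1,p}$ map into $\n$ of controlled energy — here one again uses homogeneous extension from the union of the two adjacent skeletons, valid because the relevant cells have dimension $>p$. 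Once these three points are in place, \eqref{eq_Uashe2lie4shiN5deitheeVa} and the trace identity follow by construction, and $U(x)\in\n$ a.e.\ is immediate since every building block takes values in $\n$.
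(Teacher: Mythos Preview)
Your first two ingredients are correct and match the paper: for each scale $\kappa_j$ and each $h\in H_j$ one builds a map $U^{j,h}\in\dot W^{1,p}(\R^m_+,\n)$ by composing $V^{j,h}$ with the iterated radial retraction onto the $\floor p$-skeleton, and the homogeneous-extension estimate gives $\int_{\R^m_+}|\deriv U^{j,h}|^p\le C\theta$ for \emph{every} such $h$. The averaging over $h\in H_j$ is then used exactly as you say, via a Fubini/opening-type lemma, to select $h_j\in H_j$ so that the boundary traces $\tr U^{j,h_j}$ converge to $u$ in $L^p_{\loc}(\partial\R^m_+)$ as $\kappa_j\to 0$.

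The genuine gap is in your final ``assembly'' step. You propose to glue the maps $U^{j,h_j}$ across slabs $\R^{m-1}\times[a_{j+1},a_j)$ with transition layers. This is both unnecessary and problematic. It is problematic because the traces of $U^{j,h_j}$ and $U^{j+1,h_{j+1}}$ on an interface $\{x_m=a_{j+1}\}$ have no reason to agree, and producing a $W^{1,p}$ map \emph{into $\n$} that interpolates between two unrelated $\n$-valued boundary data with controlled energy is essentially the extension problem you are trying to solve; ``homogeneous extension from the union of the two adjacent skeletons'' does not address this, since the two skeletons at different scales and different translations do not match up, and the interface data are not skeleton-supported. Your energy-summation worry is also real under this scheme and is not resolved by the sketch you give.

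It is unnecessary because the paper bypasses all of this with weak compactness: each $U_j\coloneqq U^{j,h_j}$ is already a map on the \emph{whole} half-space with $\int_{\R^m_+}|\deriv U_j|^p\le C\theta$, so a subsequence converges weakly in $\dot W^{1,p}(\R^m_+,\R^\nu)$ to some $U$; Rellich--Kondrachov gives a.e.\ convergence on balls, hence $U\in\n$ a.e.; continuity of the trace together with $\tr U_j\to u$ in $L^p_{\loc}$ yields $\tr U=u$; and lower semicontinuity gives \eqref{eq_Uashe2lie4shiN5deitheeVa}. No gluing, no transition layers, no infinite sums.
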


\begin{proof}
\textsc{Step 1. Construction of \(U^j_{h}\) by homogeneous extension. }
For each \(j \in \N\) and every \(h \in H_j\), we define the map \(U^j_{h} \colon \R^m_+ \to \n\) by a homogeneous extension.
In order to define the extension we begin by introducing a retraction of \(\R^m_+ \setminus \mathcal{E}^{\kappa_j, m - \floor{p}}\) onto \(\mathcal{C}^{\kappa_j, \floor{p - 1}}_+\), where the dual skeleton \(\mathcal{E}^{\kappa, \ell}\) is defined for \(\ell \in \{0, \dotsc, m\}\) and \(\kappa > 0\) by
\begin{equation*}
 \mathcal{E}^{\kappa,  \ell}
 \defeq \bigcup \{Q \colon Q \text{ is an \(\ell\)--dimensional face of \([0, \kappa]^m + k \kappa\), for \(k \in \Z^m\)}\}.
\end{equation*}
For \(\ell \in \{1, \dotsc, m\}\), we define the mapping
\(
P_{\kappa, \ell} \colon \mathcal{C}^{\kappa, \ell}_+ \to \mathcal{C}^{\kappa, \ell - 1}_+
\) 
to be the homogeneous retraction defined on each cube $Q \in \mathcal{Q}^{\kappa, \ell}$ in the following way: Let $x_{Q}$ be the center of the cube $Q$, so that \(Q \cap \mathcal{E}^{\kappa, m - \ell} = \{x_Q\}\) (note that when $Q \cap \partial \R^{m}_+\neq \emptyset$ then $Q \cap \R^m_+$ is a half-cube and $x_Q \in \partial \R^m_+$). On this cube the map $P_{\kappa, \ell} \colon Q \setminus \{x_Q\} \to \partial Q$ (with the boundary taken in the \(\ell\)--dimensional affine plane containing \(Q\)) is given by the formula
\begin{equation}
\label{eq_vuib0ViengaQueineiy9xaxi}
 P_{\kappa, \ell} (x) \defeq x_{Q} + \kappa\frac{x-x_{Q}}{|x-x_{Q}|_{\infty}}.
\end{equation}
We define now \(\mathcal{P}^{\kappa, \ell} \colon \R^m_+ \setminus \mathcal{E}^{\kappa, m - \ell- 1} \to \mathcal{C}^{\kappa, \ell}_+\)  by 
\begin{equation}
\label{eq:definitonofP}
 \mathcal{P}^{\kappa, \ell} \defeq P_{\kappa, \ell + 1} \compose \dotsb \compose P_{\kappa, m}.
\end{equation}
(The map \(\mathcal{P}^{\kappa, \ell}\) is illustrated in Figure \ref{figure_P_kappa_j}.)
For any $h \in H_j$, we define \(U^{j, h}: \R^m \to \n\) for almost every \(x \in \R^m_+\) by 
\begin{equation}
\label{eq_EmithiBeeY0tahthephoboh4}
 U^{j, h}(x) \defeq 
 V^{j, h} (\mathcal{P}^{\kappa_j, \floor{p}} (x - h) + h),
\end{equation}
where \(V^{j, h} \in \dot{W}^{1, p} (\mathcal{C}^{\kappa_j, \floor{p}}_+ + h, \n)\) is a map given by assumptions, such that \(\tr_{\mathcal{C}^{\kappa_j, \floor{p - 1}}_0 + h} V^{j, h} = u \big\vert_{\mathcal{C}^{\kappa_j, \floor{p - 1}}_0 + h}\) and \eqref{eq_ieghieTeengo3QuaiJiePoo4} holds. 

\begin{figure}
\begin{center}
\begin{tikzpicture}
\draw[dashed] (-7,0)--(7,0);
\node[right] at (6, 0.2) {$\partial \R^m_+$};

 \draw[shift={(0,1)},line width=1pt] (-6,0)-- (6,0);
 \foreach \y/\xtext in {-0.5,1,2}
 \draw[shift={(0,2*\y+1)},line width=1pt] (-6,0)-- (6,0);
 \foreach \x/\xtext in {0,...,6}
 \draw[shift={(2*\x,0)},line width=1pt] (-6,0)-- (-6,5);

\draw [line width=0.5pt,<->] (-6.1,0) -- (-6.1,0.998);
\node[left, scale=0.8] at (-6.1,0.5) {$\kappa_j/2$};
\draw [line width=0.5pt,<->] (-6.1,1) -- (-6.1,2.998);
\node[left, scale=0.8] at (-6.1,2) {$\kappa_j$};
\draw [line width=0.5pt,<->] (-6.1,3) -- (-6.1,5);
\node[left, scale=0.8] at (-6.1,4) {$\kappa_j$};

\foreach \x/\xtext in {0,...,5}
\foreach \y/\xtext in {0,1}
\foreach \a/\xtext in {0,90,180,270}
\draw[shift={(2*\x,2*\y)},rotate around={\a:(-5,2)}] [->, line width=0.4pt,color=gr] (-5,2) -- (-6,2);
\foreach \x/\xtext in {0,...,5}
\foreach \y/\xtext in {0,1}
\foreach \a/\xtext in {0,90,180,270}
\draw[shift={(2*\x,2*\y)},rotate around={\a:(-5,2)}] [->, line width=0.4pt,color=gr] (-5,2) -- (-6,3);
\foreach \x/\xtext in {0,...,5}
\foreach \y/\xtext in {0,1}
\foreach \a/\xtext in {0,90,180,270}
\draw[shift={(2*\x,2*\y)},rotate around={\a:(-5,2)}] [->, line width=0.4pt,color=gr] (-5,2) -- (-6,2.5);
\foreach \x/\xtext in {0,...,5}
\foreach \y/\xtext in {0,1}
\foreach \a/\xtext in {0,90,180,270}
\draw[shift={(2*\x,2*\y)},rotate around={\a:(-5,2)}] [->, line width=0.4pt,color=gr] (-5,2) -- (-6,1.5);

\foreach \x/\xtext in {0,...,5}
\foreach \a/\xtext in {0,270,180}
\draw[shift={(2*\x,0)},rotate around={\a:(-5,0)}] [->, line width=0.4pt,color=gr] (-5,0) -- (-6,0);
\foreach \x/\xtext in {0,...,5}
\foreach \a/\xtext in {0,270}
\draw[shift={(2*\x,0)},rotate around={\a:(-5,0)}] [->, line width=0.4pt,color=gr] (-5,0) -- (-6,1);
\foreach \x/\xtext in {0,...,5}
\foreach \a/\xtext in {0,270}
\draw[shift={(2*\x,0)},rotate around={\a:(-5,0)}] [->, line width=0.4pt,color=gr] (-5,0) -- (-6,.5);
\foreach \x/\xtext in {0,...,5}
\foreach \a/\xtext in {180, 270}
\draw[shift={(2*\x,0)},rotate around={\a:(-5,0)}] [->, line width=0.4pt,color=gr] (-5,0) -- (-6,-.5);
\foreach \x/\xtext in {0,...,5}
\foreach \y/\xtext in {0,...,2}
 \draw[shift={(2*\x,2*\y)}] [fill=re] (-5,0) circle (1.5pt);

\end{tikzpicture}
\end{center}
\caption{The mapping $\mathcal{P}^{\kappa, \ell}$}
\label{figure_P_kappa_j}
\end{figure}
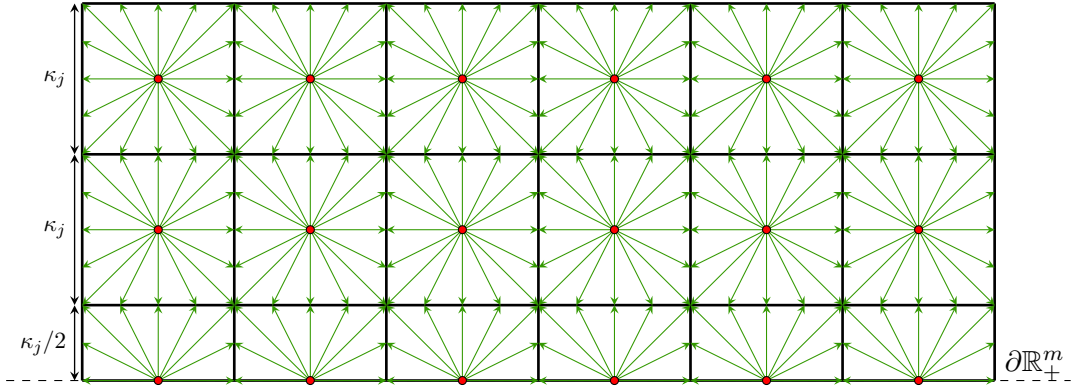

\textsc{Step 2. Uniform boundedness in $L^p$ of the gradients.}
We prove that when \(h \in H_j\), the sequence \((U^{j, h})_{j \in \N}\) remains bounded in \(\dot{W}^{1, p} (\R^m_+)\).
We begin with a well-known lemma.

\begin{lemma}\label{le:compositionwithprojection}
If \(\ell \in \N\), \(p < \ell\) and $V\in \dot{W}^{1,p}(\mathcal{C}^{\kappa, \ell-1}_+,\n)$, then we have $V\compose P_{\kappa,\ell} \in \dot{W}^{1,p}(\mathcal{C}^{\kappa, \ell}_+,\n)$ with the estimate
 \begin{equation}\label{eq:estimateoncompostionwithprojeciton}
  \int_{\mathcal{C}^{\kappa, \ell}_+ } |\deriv V \compose P_{\kappa, \ell}|^p \le \frac{2 \ell^{p/2}}{p - \ell} \kappa \int_{\mathcal{C}^{\kappa, \ell - 1}_+ } |\deriv V|^p .
 \end{equation}
\end{lemma}

Lemma \ref{le:compositionwithprojection} follows by the application of the next Lemma \ref{lemma_pyramid} on a suitable decomposition of cubes in pyramids (with a factor \(2\) coming from the fact that by definition of \(\dot{W}^{1,p}(\mathcal{C}^{\kappa, \ell-1}_+,\n)\) traces coincide on common faces).

\begin{lemma}
\label{lemma_pyramid}
Let \(\ell \in \N\) and for \(\kappa > 0\) we let 
\[
\Gamma_\kappa \defeq \{x=(x', x_\ell) \in [-\kappa,\kappa]^\ell \colon 0 \le x_\ell \le \kappa \text{ and } x_\ell = |x|_{\infty}\}.
\]
If  \(\ell > p\), \(f \in \dot{W}^{1, p} ([-\kappa, \kappa]^{\ell - 1}, \n)\), and if 
\(g \colon \Gamma_k \to \n\) is defined by 
\[
 g (x', x_\ell) = f (\kappa x'/x_\ell),
\]
then \(g \in \dot{W}^{1, p} (\Gamma_k, \n)\) and 
\[
  \int_{\Gamma_\kappa} \abs{\deriv g}^p 
  \le \frac{\ell^{p/2} \kappa }{p - \ell} 
  \int_{[-\kappa, \kappa]^{\ell-1}} \abs{\deriv f}^p.\ 
\]
\end{lemma}

\begin{proof}
We have 
\[
 \deriv g (x', x_\ell) = \kappa \Bigl(\frac{\deriv f (\kappa x'/x_\ell)}{x_\ell},
 - \frac{\deriv f (\kappa x'/x_\ell) \cdot x'}{x_\ell^2} \Bigr),
\]
so that 
\[
  \abs{\deriv g (x', x_\ell)}^2 
  = \kappa^2 \Bigl(\frac{\abs{\deriv f (\kappa x'/x_\ell)}^2}{x_\ell^2}
  + \frac{\abs{\deriv f (\kappa x'/x_\ell) \cdot x'}^2}{x_\ell^4}\Bigr)
  \le \ell \frac{\kappa^2}{x_\ell^2} \abs{\deriv f (\kappa x'/x_\ell)}^2,
\]
and thus, by Fubini's theorem and the change of variable \(y' = \kappa x'/x_\ell\), since \(p < \ell\), 
\[
\begin{split}
 \int_{\Gamma_\kappa} \abs{\deriv g}^p
 &\le  
 \ell^{p/2} 
 \int_0^\kappa 
  \int_{[-x_\ell, x_\ell]^{\ell - 1}}
  \kappa^{p} 
    \frac{\abs{\deriv f (\kappa x'/x_\ell)}^p}{x_\ell^p} \dif x' \dif x_\ell\\
&= \ell^{p/2} 
 \int_0^\kappa 
  \int_{[-\kappa, \kappa]^{\ell - 1}}
    \abs{\deriv f (y')}^p \frac{x_\ell^{\ell - p- 1}}{\kappa^{\ell - p - 1}} \dif y' \dif x_\ell
    = \kappa \frac{\ell^{p/2}}{\ell - p}
     \int_{[0, \kappa]^{\ell - 1}}
    \abs{\deriv f (y')}^p \dif y'.\qedhere
  \end{split} 
\]
\end{proof}

We pursue the proof of \cref{pr:(3)to(1)}.
Iterating Lemma \ref{le:compositionwithprojection}, in view of \eqref{eq_EmithiBeeY0tahthephoboh4},  \eqref{eq:definitonofP}, and \eqref{eq_ieghieTeengo3QuaiJiePoo4}, we obtain that for every \(h \in H_j\), we have \(U^{j, h} \in \dot{W}^{1, p} (\R^m_+, \n)\) with the estimate 
\begin{equation}
\label{eq_thic0eichaiB4ThaCie7BuHa}
\begin{split}
 \int_{\R^m_+} \abs{\deriv U^{j, h}}^p 
 = 
 \int_{\R^m_+} |\deriv V^{j, h} \compose(\mathcal{P}^{\kappa_j, \floor{p}} (\cdot -h) + h)|^p  
 &\le \C \kappa^{m-\floor{p}}\int_{\mathcal{C}^{\kappa_j,\floor{p}}_++h} \abs{\deriv V^{j, h}}^p  \leq \C \theta,
\end{split}
\end{equation}
the constants in the estimates depend only on $m$ and $p$.

\textsc{Step 3. Convergence of the boundary data.} 
We follow Brezis and Mironescu's proof, see \cite{Brezis-Mironescu-approximation}*{Lemma 4.1 (Step 1)}.

By the definition of the map $U^{j, h}$ in \eqref{eq_EmithiBeeY0tahthephoboh4} and since \(U^{j, h} \in \dot{W}^{1, p} (\R^m_+, \n)\) (see \eqref{eq_thic0eichaiB4ThaCie7BuHa}), for every \(h \in H_j\) we have 
\[ 
  \tr_{\R^{m-1}} U^{j, h} 
  = 
  \tr_{\R^{m-1}} V^{j, h} (\mathcal{P}^{\kappa_j, \floor{p}}(\cdot - h) + h) \eqqcolon u^{j, h}, 
\] 
thus,
\[
 u^{j, h} =  u \compose (\mathcal{P}^{\kappa_j, \floor{p}} ( \cdot -h) + h)\big\vert_{\R^{m - 1}\times \{0\}}.
\]
We are going to show that for a suitable choice of \(h_j \in  H_j\), \(u^{j, h_j} \to u\) in \(L^p_{\mathrm{loc}} (\R^{m - 1})\). 

We start with the observation that if \(k \in \Z^{m - 1} \times \{0\}\), then \(\mathcal{P}^{\kappa_j, \floor{p}} (x - \kappa k) = \mathcal{P}^{\kappa_j, \floor{p}} (x) - \kappa k\), and thus the map \(\partial \R^{m}_+\ni x  \mapsto x - \mathcal{P}^{\kappa_j, \floor{p}}(x) \in \partial \R^{m}_+\) is periodic.

\begin{lemma}\label{le:mainconvergenceofbdrydata} Let $f\colon \R^{\ell} \to \n$ be a Borel--measurable function and let \(\Psi \in L^\infty (\R^\ell)\). Assume that for every \(k \in \Z^\ell\), \(\Psi (x + \kappa k) = \Psi (x)\). Then, 
we have for every Borel--measurable set \(A \subset \R^\ell\)
\[ 
\int_{[0, \kappa]^\ell}  
\int_{A} \abs{f(x) - f(x - \Psi (x - h))}^p \dif x \dif h
\le \kappa^\ell \sup_{\abs{h} \le \norm{\Psi}_{L^\infty(\R^\ell)}} \int_{A} \abs{f  - f(\cdot -h)}^p.
\] 
\end{lemma} 

\Cref{le:mainconvergenceofbdrydata} is reminiscent to the opening of maps \cite{Brezis_Li}*{Section 1.1} and the related estimates \cite{BPVS2015}.

\begin{proof}
Since the function \(\Psi\) is periodic, we have by Fubini's theorem and the change of variable \(z = x - h\)
\[ 
\begin{split} 
\int_{[0, \kappa]^\ell} 
\int_{A}\abs{f(x) - f(x - \Psi (x - h))}^p \dif x \dif h
&= 
\int_{A}\int_{x - [0, \kappa]^\ell} 
 \abs{f(x) - f(x - \Psi (z))}^p  \dif z\dif x\\
&= \int_{A}\int_{[0, \kappa]^\ell} 
 \abs{f(x) - f(x - \Psi (z))}^p  \dif z\dif x\\
&\le \kappa^\ell \sup_{\abs{z} \le \norm{\Psi}_{L^\infty(\R^\ell)}} \int_{A} \abs{f  - f(\cdot -z)}^p.\ \qedhere
\end{split} 
\] 
\end{proof} 
Continuing, the proof of \cref{pr:(3)to(1)}, we set \(\Psi (x) \defeq  x - \mathcal{P}^{\kappa_j, \floor{p}} (x) \), so that for every \(x, h \in \partial \R^{m}_+\), 
\(\mathcal{P}^{\kappa_j, \floor{p}} (x  - h) + h = x - \Psi (x -h)\).
By \cref{le:mainconvergenceofbdrydata} with $\ell=m-1$, $f=u$, and \(A = B_R\) we obtain
\[ 
\frac{1}{\kappa_j^{m-1}}
\int_{[0, \kappa_j]^{m - 1}} \brac{\int_{B_R} |u - u(\mathcal{P}^{\kappa_j, \floor{p}} (\cdot -h) + h)|^p}  \dif h\le \sup_{|h| \le \sqrt{\ell - 1} \,\kappa_j} \|u - u(\cdot - h)\|^p_{L^p (B_R)} . 
\] 
Since \(u \in L^p_{\mathrm{loc}} (\R^{m - 1})\), there exists a sequence \((R_j)_{j \in \N}\) diverging to \(\infty\) such that 
\[
\lim_{j \to \infty}
 \frac{1}{\kappa_j^{m-1}}
\int_{[0, \kappa_j]^{m - 1}} \int_{B_{R_j}} |u(x') - u(\mathcal{P}^{\kappa_j, \floor{p}} (x' -h) + h)|^p  \dif x'\dif h = 0.
\]
By our assumption \eqref{eq_ceiY9eigiu3shux2quootai7}, for \(j \in \N\) large enough, we can choose an \(h_j \in H_j \simeq H_j \times \{0\}\) such that
\begin{equation}
\label{eq_ungaengaiV9oet8Oochulai4}
 \lim_{j \to \infty} \int_{B_{R_j}} |u - u(\mathcal{P}^{\kappa_j, \floor{p}} (\cdot -h_j) + h_j)|^p   = 0.
\end{equation}
We set 
\begin{equation}
\label{eq_Ahbahbe5lab8shai2aimie9a}
 U_j \defeq U^{j, h_j}.
\end{equation}
 
\textsc{Conclusion.}
By \eqref{eq_thic0eichaiB4ThaCie7BuHa}, the sequence \((U_j)_{j \in \N}\) that we defined in \eqref{eq_Ahbahbe5lab8shai2aimie9a} is bounded in \(\dot{W}^{1, p} (\R^{m}_+, \n)\). Therefore, up to a subsequence it converges weakly in \(\dot{W}^{1,p}(\R^m_+, \R^\nu)\) to a map $U\in \dot{W}^{1,p}(\R^m_+, \R^\nu)$. 
Since \(\n\) is compact, by Rellich--Kondrachov's compactness theorem we have strong convergence in $L^p(B_R)$ for every \(R > 0\), which implies, up to a subsequence, convergence almost everywhere; hence $U$ also takes values in the manifold \(\n\) and thus $U\in \dot{W}^{1,p}(\R^m_+,\n)$. 
Finally, on the boundary we have 
\[
 \tr_{\R^{m-1}} U_j \to \tr_{\R^{m - 1}} U \quad \text{in } L^p_{\mathrm{loc}} (\partial \R^m_+, \n) \text{ as } j\to \infty,
\]
and thus in view of \eqref{eq_ungaengaiV9oet8Oochulai4}, we conclude by continuity of the trace that 
\(\tr_{\R^{m-1}} U = u\). 
\end{proof}

\subsection{A qualitative necessary condition}
\label{section_qualitative}
Isobe's characterization of the obstruction to the extension of Sobolev mappings \cite{Isobe} consisted of an analytical obstruction \ref{it_zuugh7Thai5ThiephahtoP8y} and a topological obstruction \ref{it_dahGaiquu0xejai3Tah8aipa}.
On the other hand, the characterization of \cref{theorem_main_halfspace} is essentially quantitative. 
As a complement to the proof of \cref{theorem_main_halfspace} and in preparation of the proof of \cref{proof_global_decomposition}, we state and prove the next qualitative necessary condition for the extension.

\begin{proposition}
\label{pr:necessaryVMO}
Let \(2 \le m \in \N\), \(p\in (1,\infty)\).
If \(u = \tr_{\partial \R^{m}_+} U\) for some \(U \in \dot{W}^{1, p} (\R^m_+,\n)\), then there exists a Borel--measurable function \(w \colon \partial \R^{m}_+ \to [0, \infty]\) with
\(
 \int_{\partial \R^{m}_+} w <\infty
\)
such that if $\Sigma$ is a finite simplicial complex of dimension at most \(\floor{p}\), $\Sigma_0 \subset \Sigma$ is a subcomplex of codimension 1, the map \(\sigma \colon \Sigma \to \oline{\R}^{m}_+\) is Lipschitz--continuous, \(\sigma (\Sigma_0) \subset \partial \R^m_+\), and
\[
\int_{\Sigma_0} w \compose \sigma < \infty, 
\]
then \(u \compose \sigma \big\vert_{\Sigma_0}\) is homotopic in \(\mathrm{VMO} (\Sigma_0, \n)\) to \(V \big\vert_{\Sigma_0}\) for some \(V \in C (\Sigma, \n)\); if moreover \(\dim \Sigma < p\), then \(u \compose \sigma\big\vert_{\Sigma_0}\) is equal almost everywhere to \(V \big\vert_{\Sigma_0}\) for some \(V \in C (\Sigma, \n)\).
\end{proposition}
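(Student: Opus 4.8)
The plan is to reuse the extension produced in the proof of \cref{pr:trace_trails_necessary_2} and then to study only its trace on \(\Sigma_0\). That proof builds a summable Borel function \(w \colon \partial \R^m_+ \to [0,\infty]\) such that, whenever \(\int_{\Sigma_0} w \compose \sigma < \infty\), a suitable choice of \(\xi\) in the solid spherical cap yields a map \(V \defeq U \compose \sigma_\xi \in \dot{W}^{1,p}(\Sigma,\n)\) with \(\tr_{\Sigma_0} V = u \compose \sigma \big\vert_{\Sigma_0}\) (this part of the argument, resting on \cref{le:integration_2}, does not use homogeneity of \(\Sigma\) and works for \(\Sigma\) of any dimension). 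I take this \(w\), which is summable as required. Since \(\dim \Sigma \le \floor{p}\), either \(\dim \Sigma < p\), or \(\dim \Sigma = p\), which forces \(p \in \N\) and \(\dim \Sigma_0 = p - 1\); I treat these two cases separately.

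If \(\dim \Sigma < p\), then \(V \in \dot{W}^{1,p}(\Sigma,\n)\) with \(p > \dim \Sigma\), so the Morrey--Sobolev embedding applied on each maximal simplex — the traces matching across common faces by the definition of \(\dot{W}^{1,p}(\Sigma,\cdot)\) — shows that \(V\) coincides almost everywhere with a map \(V' \in C(\Sigma,\n)\). Then \(V' \big\vert_{\Sigma_0} = \tr_{\Sigma_0} V = u \compose \sigma \big\vert_{\Sigma_0}\) almost everywhere on \(\Sigma_0\), which is the stated ``moreover'' conclusion; and two maps equal almost everywhere are the same element of \(\mathrm{VMO}(\Sigma_0,\n)\), hence homotopic there through the constant homotopy. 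This settles, in particular, the whole statement when \(p \notin \N\).

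It remains to handle \(p \in \N\) with \(\dim \Sigma = p\) and \(\dim \Sigma_0 = p - 1\). Here \(V \in \dot{W}^{1,p}(\Sigma,\n)\) with \(p = \dim \Sigma\) belongs to \(\mathrm{VMO}(\Sigma,\n)\), and its trace \(\tr_{\Sigma_0} V = u \compose \sigma \big\vert_{\Sigma_0}\) belongs to the critical fractional space \(\dot{W}^{1-1/p,p}(\Sigma_0,\n)\), for which \((1-1/p)p = p - 1 = \dim \Sigma_0\), hence to \(\mathrm{VMO}(\Sigma_0,\n)\); so both maps between which we must produce a homotopy are genuine VMO maps on \(\Sigma_0\). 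By the Brezis--Nirenberg theory \cite{Brezis_Nirenberg_1} — mollification on \(\Sigma\) followed by the nearest-point retraction \(\Pi\) onto \(\n\), legitimate on a finite simplicial complex — the map \(V\) is homotopic in \(\mathrm{VMO}(\Sigma,\n)\) to a map \(V' \in C^\infty(\Sigma,\n)\) along the path \(t \mapsto \Pi(\rho_{t\varepsilon_0} \ast V)\) joining \(V\) at \(t = 0\) to \(V' \defeq \Pi(\rho_{\varepsilon_0} \ast V)\) at \(t = 1\). I would then restrict this path to \(\Sigma_0\): for \(t > 0\) the map \(\Pi(\rho_{t\varepsilon_0} \ast V)\) is smooth, hence continuous and VMO on \(\Sigma_0\), and it remains to check that as \(t \to 0^+\) these restrictions converge in \(\mathrm{VMO}(\Sigma_0,\n)\) to \(\tr_{\Sigma_0} V = u \compose \sigma \big\vert_{\Sigma_0}\); this yields the required homotopy in \(\mathrm{VMO}(\Sigma_0,\n)\) between \(u \compose \sigma \big\vert_{\Sigma_0}\) and \(V' \big\vert_{\Sigma_0}\).

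The main obstacle is exactly this last convergence, that is, the \emph{compatibility of the Brezis--Nirenberg homotopy with traces}: one has to rule out that the boundary values \(\Pi(\rho_{t\varepsilon_0} \ast V) \big\vert_{\Sigma_0}\) drift towards some datum other than \(\tr_{\Sigma_0} V\). In \(L^p(\Sigma_0)\) this is the classical statement that collar averaging of a \(W^{1,p}\) map recovers its trace; to upgrade it to \(\mathrm{VMO}(\Sigma_0,\n)\) I would use an approximating sequence \(V_j\) of continuous maps \(\Sigma \to \R^\nu\) with \(V_j \to V\) in \(\dot{W}^{1,p}(\Sigma,\R^\nu)\) (for instance \(V_j = U_j \compose \sigma_\xi\) with \(U_j\) the mollifications of \(U\) used in \cref{pr:trace_trails_necessary_2}), so that \(\tr_{\Sigma_0} V_j = V_j \big\vert_{\Sigma_0} \to u \compose \sigma \big\vert_{\Sigma_0}\) in \(\dot{W}^{1-1/p,p}(\Sigma_0,\n) \hookrightarrow \mathrm{VMO}(\Sigma_0,\n)\) by continuity of the trace operator and the critical embedding, combined with the continuity of \(\rho_\varepsilon \ast (\cdot)\) and of \(\Pi\) and an equi-vanishing of mean oscillations along the family. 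An alternative, possibly cleaner, route is to couple the two mollifications: run the construction simultaneously on \(\Sigma\) and on \(\Sigma_0\) with a single parameter, using that \(\tr_{\Sigma_0}(\rho_\varepsilon \ast V)\) differs from an induced mollification of \(\tr_{\Sigma_0} V\) by a term negligible in \(\mathrm{VMO}(\Sigma_0,\n)\), so that the homotopy restricts by design.
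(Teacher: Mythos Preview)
Your strategy is exactly the one the authors describe---and explicitly set aside---in the paragraph immediately preceding their proof: they observe that the result ``would follow from \cref{pr:trace_trails_necessary_2}, embeddings of \(\dot{W}^{1,p}(\Sigma,\n)\) into \(\mathrm{VMO}(\Sigma,\n)\), an embedding of \(\dot{W}^{1-1/p,p}(\Sigma_0,\n)\) into \(\mathrm{VMO}(\Sigma_0,\n)\) together with a suitable approximation by continuous map'', but that this requires a \emph{regularity assumption on the simplicial complex} which is absent from the statement. Their counterexample is two \(p\)--simplices glued along a face of codimension at least \(p\), say a single vertex: in the critical case \(\dim\Sigma = p\), a \(\dot{W}^{1,p}\) map on such a complex carries no compatibility condition at that vertex (a point has zero \(p\)--capacity in dimension \(p\)), so the map equal to distinct constants on the two simplices lies in \(\dot{W}^{1,p}(\Sigma,\n)\) but not in \(\mathrm{VMO}(\Sigma,\n)\). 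Hence your step ``\(V \in \dot{W}^{1,p}(\Sigma,\n)\) with \(p=\dim\Sigma\) belongs to \(\mathrm{VMO}(\Sigma,\n)\)'' is not justified in general, and neither is the companion embedding on \(\Sigma_0\). The obstacle you flag---compatibility of the mollification homotopy with traces---is real but secondary: before reaching it you have not established that either \(V\) or \(u\compose\sigma\big\vert_{\Sigma_0}\) is a VMO map at all, so there is no VMO homotopy to discuss. Your subcritical case \(\dim\Sigma < p\) is essentially sound (Morrey on each maximal simplex plus uniform convergence of the Lipschitz approximants \(U_j\compose\sigma_\xi\) forcing agreement at all shared faces), and the paper disposes of it via Campanato's characterisation.

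The paper's remedy is to change the function \(w\): rather than reuse the one from \cref{pr:trace_trails_necessary_2}, they build \(w\) from the maximal function \((\mathcal{M}\abs{\deriv U})^p\) together with Lebesgue--point conditions (equations \eqref{eq_shu9adoeTh9sa1tevao2Jooz}--\eqref{eq_oongeikahhu1sheiS0Nai3ui}). This buys a pointwise Lusin--Lipschitz inequality for \(U\compose\sigma_\xi\), which after averaging over balls \(B^\Sigma_\delta(a)\) and \(B^{\Sigma_0}_\delta(a)\) yields direct Campanato/BMO--type bounds (estimates \eqref{eq_ohghoo7seixo3sareiCh9que}, \eqref{eq_eitahNeze5ahT9pheiG4Ahth}, \eqref{eq_IeN5aghahjie1aiLoh9Eot2j}) that are uniform in \(a\) and vanish as \(\delta\to 0\), \emph{without} ever invoking a Sobolev\(\hookrightarrow\)VMO embedding on the complex. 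From these one reads off simultaneously that the averages \(V_\delta\) on \(\Sigma\) and \(v_\delta\) on \(\Sigma_0\) stay uniformly close to \(\n\) and to each other along \(\Sigma_0\); projecting to \(\n\) then gives the continuous approximant on \(\Sigma\) and the VMO homotopy on \(\Sigma_0\) in one stroke.
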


We recall following \cite{Brezis_Nirenberg_1} that a mapping $V\colon \Sigma \to \R^\nu$ belongs to the space \(\mathrm{VMO} (\Sigma, \R^\nu)\) whenever $V$ is Borel--measurable and 
\[
  \lim_{\rho\to 0} \sup_{a\in \Sigma} \fint\limits_{B^\Sigma_\rho(a)}\fint\limits_{B^\Sigma_\rho(a)} \abs{V(x) - V(y)} \dif x \dif y=0;
\]
the space \(\mathrm{VMO}(\Sigma, \R^\nu)\) endowed with bounded mean oscillation semi-norm
\begin{equation}
\label{eq_hiekiejer8aeseesh1hoo2Ac}
 \norm{V}_{\mathrm{BMO}} = \sup_{\substack{\rho > 0\\ a \in \Sigma}}\ \fint\limits_{B^\Sigma_\rho(a)}\fint\limits_{B^\Sigma_\rho(a)} \abs{V(x) - V(y)} \dif x \dif y < \infty
\end{equation}
and the distance of convergence in measure is complete. We finally set
\[
 \mathrm{VMO} (\Sigma, \n)
 \defeq 
 \{V \in \mathrm{VMO} (\Sigma, \R^\nu) \colon V (x) \in \n \text{ for almost every } x \in \Sigma\}.
\]
In particular, the maps \(V_0, V_1 \in \mathrm{VMO} (\Sigma, \n)\) are homotopic in \(\mathrm{VMO}(\Sigma, \n)\) whenever there exists a map \(H \in C ([0, 1], \mathrm{VMO}(\Sigma, \n))\) such that \(V_0 = H(0)\) and \(V_1 = H (1)\); the continuity is understood with respect to convergence in measure and convergence with respect to the bounded mean oscillation semi-norm \eqref{eq_hiekiejer8aeseesh1hoo2Ac}.

We recall that \cref{pr:trace_trails_necessary_2} gave under the same assumptions the conclusion that \(u \compose \sigma \big\vert_{\Sigma_0} = \tr_{\Sigma_0} W\) for some \(W \in \dot{W}^{1, p}(\Sigma, \n)\). 
\Cref{pr:necessaryVMO} would follow from \cref{pr:trace_trails_necessary_2}, embeddings of \(\dot{W}^{1, p} (\Sigma, \n)\) into \(\mathrm{VMO}(\Sigma, \n)\), an embedding of \(\dot{W}^{1-1/p, p} (\Sigma_0, \n)\) into \(\mathrm{VMO}(\Sigma_0, \n)\) together with a suitable approximation by continuous map. In order for this approach to work, one would need our assumption \(\dim \Sigma \le p\) together with a regularity assumption on the simplicial complex: for instance the embedding theorem fails for simplicial complex composed of two simplices intersecting on a set of codimension at least \(p\).

In order to avoid these technical issues, we follow \cite{BPVS} and give a direct proof of \cref{pr:necessaryVMO}.

\begin{proof}%
[Proof of \cref{pr:necessaryVMO}]%
\resetconstant
As in the proof of \cref{pr:trace_trails_necessary_2}, we 
define for \(y \in \partial \R^{m}_+\)
\begin{equation}
\label{eq_shu9adoeTh9sa1tevao2Jooz}
  w (y) \defeq 
  \left\{
  \begin{aligned}
  &\smashoperator[r]{\int_{\R^m_+}}
  \frac{W (x)x_m^\gamma}{\abs{x - y}^{m - 1 + \gamma}} \dif x&&\text{if
  } \lim_{\rho \to 0} \ \smashoperator{\fint_{\R^m_+ \cap B_\rho (y)}} \abs{u (y) - U} = 0,\\
  &\infty && \text{otherwise},\\
  \end{aligned}
  \right.
\end{equation}
where \(0 < \gamma \le 1\) will be chosen later. The function  \(W \colon \R^m_+ \to [0, \infty]\) is chosen as 
\begin{equation}
\label{eq_oongeikahhu1sheiS0Nai3ui}
 W (x) \defeq 
 \left\{
 \begin{aligned}
   &(\mathcal{M} \abs{\deriv U})^p(x) &&\text{ if
  } \lim_{\rho \to 0} \ \smashoperator{\fint_{\R^m_+ \cap B_\rho (x)}} \abs{U (x) - U} = 0,\\
   &\infty && \text{otherwise},
 \end{aligned}
 \right.
\end{equation}
with \(\mathcal{M} \abs{\deriv U}\colon \R^m_+ \to [0, \infty]\) denoting the Hardy--Littlewood maximal function, given for \(x \in \R^m_+\) by
\[
\mathcal{M} \abs{\deriv U}(x) 
 = \sup_{\delta > 0} \frac{1}{\abs{B_\delta (x)}} \smashoperator{\int_{B_\delta (x) \cap \R^m_+}} \abs{\deriv U}.
\]
By classical properties of Lebesgue points of functions and of traces (see for example \cite{Evans_Gariepy}*{Sections 1.7 and 5.3}), the first case in the definitions  \eqref{eq_shu9adoeTh9sa1tevao2Jooz} and \eqref{eq_oongeikahhu1sheiS0Nai3ui} is taken almost everywhere. By the classical Hardy--Littlewood maximal function theorem, since \(p > 1\), 
\begin{equation}
\begin{split}
\int_{\partial \R^{m}_+} w 
&= \int\limits_{\partial \R^{m}_+}
 \smashoperator[r]{\int_{\R^m_+}}
  \frac{W (x)x_m^\gamma }{\abs{x - y}^{m-1+\gamma}} \dif x \dif y = \Cl{cst_eequaY2kah0ew9Jie7logh0y} \int_{\R^{m}_+} W\\
 &= \Cr{cst_eequaY2kah0ew9Jie7logh0y} \int_{\R^{m}_+} (\mathcal{M} \abs{\deriv U})^p\le \C \int_{\R^{m}_+} \abs{\deriv U}^p < \infty.
 \end{split}
\end{equation}

Arguing as in the proof of \cref{pr:trace_trails_necessary_2}, since \(\gamma \le 1\), we obtain by an application of \cref{le:integration_2} that given an $\ell$--dimensional simplicial complex $\Sigma$, a subcomplex $\Sigma_0\subset \Sigma$ of codimension 1, a Lipschitz--continuous $\sigma\colon \Sigma\to \R^{m}_+$ such that $\sigma(\Sigma_0)\subset \partial \R^{m}_+$ and $\int_{\Sigma_0} w \compose \sigma <\infty$, there exists a \(\xi \in C^\eta_\rho\) such that 
\[
 \int_{\Sigma} W \compose \sigma_\xi < \infty,
\]
where the map $\sigma_\xi \colon \Sigma \to \R^m_+$ is defined in \eqref{eq_aiqu4quohn9aeSheeghoeshi} and the solid cone $C^\eta_\rho \subset \R^m$ is defined in \eqref{eq:solidsphericalcup}.

If \(W (x) < \infty\) then by definition of \(W\) in \eqref{eq_oongeikahhu1sheiS0Nai3ui}, \(x\) is Lebesgue point of \(U\) and for each \(\rho > 0\), by a suitable version of the Sobolev representation formula, we have in view of the definition of the maximal function
\begin{equation}
\label{eq_Ohr5aeghoomaeWee5oosae5e}
 \smashoperator{\fint_{\R^m_+ \cap B_\rho (x)}} \abs{U (x) -  U (z)} \dif z 
 \le \C \smashoperator{\int_{\R^m_+ \cap B_\rho (x) }} \frac{\abs{\deriv U (z)}}{\abs{z - x}^{m - 1}} \dif z
 \le \C \rho \mathcal{M} \abs{\deriv U} (x).
\end{equation}
If \(\abs{x - y} \le \rho\), then 
\(
  B_{\rho/2} (\tfrac{x + y}{2}) \subseteq B_\rho (x) \cap B_\rho (y)
\).
If \(x, y \in \Sigma\) and \(\abs{y - x} \le \delta\), then \(\abs{\sigma_{\xi} (x) - \sigma_{\xi} (y)} \le \seminorm{\sigma}_{\lip}\delta\). Thus, taking \( \rho \coloneqq  \seminorm{\sigma}_{\lip}\delta\), we obtain, in view of \eqref{eq_Ohr5aeghoomaeWee5oosae5e}, the Lusin--Lipschitz inequality
\begin{equation}
\label{eq_eiDaephioy5caquieNg3ouch}
\begin{split}
 \abs{U (\sigma_{\xi} (x))- U (\sigma_\xi(y))}
 &= \fint\limits_{B_{\frac{\rho}{2}}{\brk{(\sigma_\xi(x) + \sigma_\xi(y))/2}}} \abs{U (\sigma_{\xi} (x))- U (\sigma_\xi(y))} \dif z\\
 &\le \C \biggl(\smashoperator[r]{\fint_{B_\rho (\sigma_\xi (x))}} \abs{U (\sigma_\xi(x))- U (z)} \dif z + \smashoperator{\fint_{B_\rho (\sigma_\xi (y))}} \abs{U (\sigma_\xi(y)) -  U (z)} \dif z\biggr)\\
 &\le \C \rho \bigl(\mathcal{M} \abs{\deriv U} (\sigma_\xi (x)) + \mathcal{M} \abs{\deriv U} (\sigma_\xi(y))\bigr). 
 \end{split}
\end{equation}
Let $a\in \Sigma$ and \(\delta > 0\). Taking the mean value over $x,\,y\in \Sigma\cap B_\delta(a)$ on both sides of \eqref{eq_eiDaephioy5caquieNg3ouch} with \(\rho = \seminorm{\sigma}_{\mathrm{Lip}} \delta\) and applying H\"older's inequality, we get, since \(\dim \Sigma = \ell\),
\begin{multline}
\label{eq_ohghoo7seixo3sareiCh9que}
 \fint\limits_{B_\delta^\Sigma (a)} \fint\limits_{B_\delta^{\Sigma} (a)}
 \abs{U (\sigma_\xi (x))- U(\sigma_\xi (y))} \dif x\dif y \le \frac{\C \seminorm{\sigma}_{\mathrm{Lip}}}{\delta^{\ell - 1}}
 \smashoperator{\int_{B_\delta^\Sigma (a)}} \mathcal{M} \abs{\deriv U} (\sigma_\xi (y)) \dif y\\
 \le \Cl{cst_hee9AiCoo6osh3axeirew3no} \seminorm{\sigma}_{\mathrm{Lip}} \delta^{1 - \frac{\ell}{p}} \biggl(\int_{B_\delta^\Sigma (a)} (\mathcal{M} \abs{\deriv U})^p \compose \sigma_\xi \biggr)^\frac{1}{p}
 = \Cr{cst_hee9AiCoo6osh3axeirew3no} \seminorm{\sigma}_{\mathrm{Lip}} \delta^{1 - \frac{\ell}{p}} \biggl(\int_{B_\delta^\Sigma (a)} W \compose \sigma_\xi \biggr)^\frac{1}{p},
\end{multline}
where the constants depend on \(\Sigma\).

Similarly, if \(w (x)<\infty\), then 

\begin{equation}
\label{eq_aer1aelee4aifiebaeQu5voo}
 \smashoperator{\fint_{\R^m_+ \cap B_\rho (y)}} \abs{u (y) -  U (z)} \dif z 
 \le \C \smashoperator{\int_{\R^m_+ \cap B_\rho (y) }} \frac{\abs{\deriv U (z)}}{\abs{z - y}^{m - 1}} \dif z.
\end{equation}
Hence, similarly as before, for $x,\, y\in\Sigma_0$ we have
\begin{equation}
\label{eq_IechooCohsoxoru3cee1ohli}
 \abs{U (\sigma_{\xi} (y))- U (\sigma_\xi(x))} 
\le \C \biggl(\smashoperator[r]{\int_{ \R^m_+ \cap B_\rho (\sigma_\xi(x)) }} \frac{\abs{\deriv U (z)}}{\abs{z - \sigma_\xi(x)}^{m - 1}} \dif z 
+ \smashoperator[r]{\int_{\R^m_+ \cap B_\rho (\sigma_\xi(y))}} \frac{\abs{\deriv U (z)}}{\abs{z - \sigma_\xi(y)}^{m - 1}} \dif z 
 \biggr). 
\end{equation}
Hence, choosing \(\rho = \delta \seminorm{\sigma}_{\mathrm{Lip}}\), taking the mean value over $x,\, y\in B_\delta^{\Sigma_0}(a)$ on both sides, and applying H\"{o}lder's inequality we obtain
\begin{equation}
\label{eq_eitahNeze5ahT9pheiG4Ahth}
\begin{split}
 \fint\limits_{B_\delta^{\Sigma_0} (a)} \smashoperator[r]{\fint_{B_\delta^{\Sigma_0} (a)}}
 &\quad
 \abs{U (\sigma_{\xi}(x)) -  U(\sigma_{\xi}(y))} \dif x \dif y\\
 &\le \C
 \fint_{B_\delta^{\Sigma_0} (a)} 
 \int_{\R^m_+ \cap B_\rho (\sigma_\xi (x))} \frac{\abs{\deriv U (z)}}{\abs{z - \sigma_{\xi} (x)}^{m - 1}} \dif z \dif x \\
 &\le \C
 \biggl(
 \fint_{B_\delta^{\Sigma_0} (a)}
 \int_{\R^m_+} \frac{\abs{\deriv U (z)}^p z_m^{\gamma}}{\abs{z - \sigma_{\xi} (x)}^{m - 1 +\gamma}} \dif z \dif x\biggr)^\frac{1}{p}\\
 & \hspace{4em}
 \biggl( \fint\limits_{B_\delta^{\Sigma_0} (a)}
 \int\limits_{\R^m_+ \cap B_\rho (\sigma_\xi(x))} \frac{1}{z_m^{\gamma/(p - 1)}\abs{z - \sigma_\xi(x)}^{m - 1 - \gamma/(p - 1)}} \dif z \dif x\biggr)^{1 - \frac{1}{p}}\\
 &=  \C \, \delta^{1 - \frac{\ell}{p}} \seminorm{\sigma}_{\mathrm{Lip}}^{1 - 1/p}
 \biggl(
 \int_{B_\delta^{\Sigma_0} (a)} w \circ \sigma\biggr)^\frac{1}{p},
\end{split}
\end{equation}
provided \(\gamma < p - 1\), with constants depending on \(\Sigma_0\).

Finally, if \(x\in\Sigma_0\),  \(y\in \Sigma\) and \(\abs{x - y} \le \delta\) so that \(\abs{\sigma_{\xi} (x) - \sigma_{\xi}(y)} \le \rho = \delta \seminorm{\sigma}_{\mathrm{Lip}}\), we have by \eqref{eq_Ohr5aeghoomaeWee5oosae5e}, \eqref{eq_aer1aelee4aifiebaeQu5voo}, and the triangle inequality,
\begin{equation}
 \abs{U (\sigma_{\xi} (x))- U (\sigma_\xi(y))}
 \le \C \brk[\Bigg]{\,\smashoperator[r]{\int_{\R^m_+ \cap B_\rho (\sigma_{\xi} (x))}} \frac{\abs{\deriv U (z)}}{\abs{z - \sigma_\xi(x)}^{m - 1}} \dif z
 + \rho \mathcal{M} \abs{\deriv U} (\sigma_\xi(y))
}.
\end{equation}
Taking the average with respect to \(x \in B_\delta^{\Sigma_0} (a)\) and \(y \in B_\delta^\Sigma (a)\), we proceed as in \eqref{eq_ohghoo7seixo3sareiCh9que} and \eqref{eq_eitahNeze5ahT9pheiG4Ahth}, and we obtain
\begin{equation}
\label{eq_IeN5aghahjie1aiLoh9Eot2j}
 \fint\limits_{B_\delta^{\Sigma_0} (a)} \fint\limits_{B_\delta^\Sigma (a)}
 \abs{U (\sigma_\xi (x))- U(\sigma_\xi (y))} \dif y \dif x
 \le \C \delta^{1 - \frac{\ell}{p}} \brk[\Bigg]{\seminorm{\sigma}_{\mathrm{Lip}}^{p - 1} \smashoperator{\int\limits_{B_\delta^{\Sigma_0} (a)}} w \compose \sigma + \seminorm{\sigma}_{\mathrm{Lip}}^{p} \smashoperator{\int_{B_\delta^\Sigma (a)}} W \compose \sigma_\xi }^\frac{1}{p}.
\end{equation}

By Lebesgue's dominated convergence theorem, 
we have in view of \eqref{eq_ohghoo7seixo3sareiCh9que}, \eqref{eq_eitahNeze5ahT9pheiG4Ahth}, and \eqref{eq_IeN5aghahjie1aiLoh9Eot2j},
\begin{gather}
\label{eq_biesohthu2Oolo5niree1eib}
\lim_{\delta \to 0} \sup_{a \in \Sigma}
\frac{1}{\delta^{1 - \frac{\ell}{p}}} 
  \fint_{B_\delta^\Sigma (a)} \fint_{B_\delta^\Sigma (a)}
 \abs{U (\sigma_\xi (x)) - U(\sigma_\xi (y))} \dif x \dif y=0,\\
 \label{eq_co4ah3shieGhuphaifei7uch}
 \lim_{\delta \to 0} \sup_{a \in \Sigma_0}
 \frac{1}{\delta^{1 - \frac{\ell}{p}}}
 \fint_{B_\delta^{\Sigma_0} (a)} \fint_{B_\delta^{\Sigma_0} (a)}
 \abs{U (\sigma_{\xi}(x)) - U(\sigma_{\xi}(y))}\dif x \dif y=0,\\
\intertext{and}
\label{eq_ieGaeFeifah9yebaa1joisoo}
 \lim_{\delta \to 0} \sup_{a \in \Sigma_0}
 \frac{1}{\delta^{1 - \frac{\ell}{p}}} \fint_{B_\delta^{\Sigma_0} (a)} \fint_{B_\delta^\Sigma (a)}
 \abs{U (\sigma_\xi (x))- U(\sigma_\xi (y))} \dif x \dif y=0.
\end{gather}

We define
\begin{align*}
 v_\delta(x) &\defeq \fint_{B_\delta^{\Sigma_0} (x)} u \compose \sigma &
&\text{ and } &
 V_\delta(x) &\defeq \fint_{B_\delta^\Sigma (x)} U \compose \sigma_\xi.
\end{align*}
By \eqref{eq_biesohthu2Oolo5niree1eib}, we have \(\lim_{\delta \to 0} \dist (V_\delta, \n) = 0\), by \eqref{eq_co4ah3shieGhuphaifei7uch} \(\lim_{\delta \to 0} \dist (v_\delta, \n) = 0\), and by \eqref{eq_ieGaeFeifah9yebaa1joisoo} \(\lim_{\delta \to 0} \norm{V_\delta - v_\delta}_{L^\infty (\Sigma_0)} = 0\). Hence for \(\delta > 0\) small enough, \(\Pi_\n \compose V_\delta\) and \(\Pi_\n \compose v_\delta\) are well-defined and continuous respectively on \(\Sigma\) and on \(\Sigma_0\) and are close to each other on \(\Sigma_0\). Hence for \(\delta > 0\) small enough, \(\Pi_\n \compose v_\delta\) is homotopic to the restriction of a continuous map. Since \(\Pi_\n \compose v_\delta \to u \compose \sigma\big\vert_{\Sigma_0}\) in \(\mathrm{VMO} (\Sigma_0, \n)\), we conclude that \(u \compose \sigma \big\vert_{\Sigma_0}\) is homotopic in \(\mathrm{VMO}(\Sigma_0, \n)\) to the restriction to \(\Sigma_0\) of a continuous map on \(\delta\).
The case \(p > \ell\) follows from  \eqref{eq_aer1aelee4aifiebaeQu5voo}, \eqref{eq_eitahNeze5ahT9pheiG4Ahth}, \eqref{eq_IeN5aghahjie1aiLoh9Eot2j}, and Campanato's characterization of H\"older continuous functions by averages \cite{Campanato}. 
\end{proof}

\section{The global case}

In this Section we give the proof of \cref{theorem_global_necessary_sufficient} and \cref{proof_global_decomposition}. 

\subsection{Embedding into the half-space}
In order to reduce the situation of manifolds to an open subset of a Euclidean half-space, we rely on the following isometrical embedding.

\begin{proposition}[Isometrical embedding into a half-space and retraction]\label{proposition_mbdry_embedding_and_retraction} 
If \(\m\) is a compact Riemannian manifold with boundary \(\partial \m\), 
then there exists an isometric embedding \(i \colon \m \to \oline{\R}^\mu_+\) such that \(i (\partial \m) = i (\m) \cap \partial \R^\mu_+\),
and there exists a smooth map \(\Pi_{\m} \colon  \mathcal{U} \to i(\m)\) such that \(\mathcal{U} \subset  \oline{\R}^\mu_+\) is relatively open, 
\(\Pi_{\m} (\mathcal{U} \cap \partial \R^\mu_+) \subset i(\partial \m)\),
and for every \(x \in i (\m)\), \(x \in \mathcal{U}\), and \(\Pi_{\m} (x) = x\).
\end{proposition}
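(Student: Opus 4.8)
The plan is to build $i$ by using a boundary--defining function as one extra coordinate of an isometric embedding for a suitably modified metric, and to build $\Pi_{\m}$ from an adapted (``neat'') tubular neighbourhood of $i(\m)$ in $\oline{\R}^{\mu}_{+}$. First I would extend $(\m,g)$ to an open Riemannian manifold $(\widehat{\m},\widehat g)$ with $\m\subseteq\widehat{\m}$, the boundary $\partial\m$ lying in the interior of $\widehat{\m}$, and $\widehat g\rvert_{\m}=g$; this is standard (glue an exterior collar along $\partial\m$ and extend the metric tensor smoothly, using that positive definiteness of symmetric bilinear forms is an open convex condition). Then I would fix a smooth $\phi\colon\widehat{\m}\to\R$ with $\phi>0$ on $\inte\m$, $\phi=0$ and $\deriv\phi\neq0$ on $\partial\m$, $\phi<0$ on $\widehat{\m}\setminus\m$, and $\{\phi\ge0\}=\m$ (for instance $\phi$ equal near $\partial\m$ to the signed $\widehat g$--distance to $\partial\m$, cut off away from $\partial\m$); after multiplying $\phi$ by a small positive constant we may assume $\seminorm{\deriv\phi}_{\widehat g}<1$ on $\widehat{\m}$, so that $g'\defeq\widehat g-\deriv\phi\otimes\deriv\phi$ is again a smooth Riemannian metric on $\widehat{\m}$, since $g'(v,v)\ge\bigl(1-\seminorm{\deriv\phi}_{\widehat g}^{2}\bigr)\widehat g(v,v)>0$.

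By Nash's isometric embedding theorem \cite{Nash} applied to $(\widehat{\m},g')$ there is, for $\mu$ large enough, a smooth embedding $h\colon\widehat{\m}\to\R^{\mu-1}$ with $h^{*}\langle\,\cdot\,,\,\cdot\,\rangle=g'$. Set $i\defeq(h,\phi)\rvert_{\m}\colon\m\to\R^{\mu-1}\times\R=\R^{\mu}$. Then $i^{*}\langle\,\cdot\,,\,\cdot\,\rangle=g'+\deriv\phi\otimes\deriv\phi=\widehat g\rvert_{\m}=g$, so $i$ is an isometric immersion, and since $h$ is injective and $\m$ is compact, $i$ is an isometric embedding. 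Its last coordinate is $\phi\rvert_{\m}$, hence $i(\m)\subseteq\{x_{\mu}\ge0\}=\oline{\R}^{\mu}_{+}$, $i(\inte\m)\subseteq\R^{\mu}_{+}$, and $i(x)\in\partial\R^{\mu}_{+}$ exactly when $\phi(x)=0$, i.e.\ when $x\in\partial\m$; therefore $i(\partial\m)=i(\m)\cap\partial\R^{\mu}_{+}$. Moreover, since $\deriv\phi\neq0$ on $\partial\m$, for $y\in\partial\m$ the space $T_{i(y)}i(\m)$ contains $\deriv i_{y}(v)$ with last component $\deriv\phi_{y}(v)\neq0$, so $i(\m)$ meets the hyperplane $\partial\R^{\mu}_{+}$ transversally; together with the previous identity, $i(\m)$ is a compact \emph{neat} submanifold of the manifold with boundary $\oline{\R}^{\mu}_{+}$.

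To obtain $\Pi_{\m}$ I would invoke the classical tubular neighbourhood theorem for neat submanifolds of a manifold with boundary: $i(\m)$ admits a relatively open neighbourhood $\mathcal{U}\subseteq\oline{\R}^{\mu}_{+}$ together with a smooth retraction $\Pi_{\m}\colon\mathcal{U}\to i(\m)$ for which $\mathcal{U}\cap\partial\R^{\mu}_{+}$ is a tubular neighbourhood of $i(\partial\m)$ in $\partial\R^{\mu}_{+}$ and $\Pi_{\m}(\mathcal{U}\cap\partial\R^{\mu}_{+})\subseteq i(\partial\m)$; being a retraction, $\Pi_{\m}$ restricts to the identity on $i(\m)\subseteq\mathcal{U}$. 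Concretely this neighbourhood is assembled by first choosing a tubular neighbourhood of $i(\partial\m)$ inside $\partial\R^{\mu}_{+}$ and a collar of $i(\partial\m)$ inside $i(\m)$, and then thickening off the boundary in the remaining normal directions, working near $i(\partial\m)$ with a metric for which $\partial\R^{\mu}_{+}$ is totally geodesic so that the normal exponential flow in boundary--tangent normal directions stays in $\partial\R^{\mu}_{+}$. This yields all the asserted properties of $i$, $\mathcal{U}$ and $\Pi_{\m}$.

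I expect the retraction to be the main obstacle, and the only place where the geometry of the problem is genuinely used: the naive Euclidean nearest--point projection onto $i(\m)$ does \emph{not} work, because a point of $\partial\R^{\mu}_{+}$ lying very close to an interior point $i(q)$, $q\in\inte\m$ (where $i(\m)$ bends down toward the hyperplane), would be sent to $i(q)\notin i(\partial\m)$, violating $\Pi_{\m}(\mathcal{U}\cap\partial\R^{\mu}_{+})\subseteq i(\partial\m)$; one must instead use the adapted tubular neighbourhood, which is exactly where the neatness of $i(\m)$ in $\oline{\R}^{\mu}_{+}$ — the transversality coming from $\deriv\phi\neq0$ on $\partial\m$ together with $i(\m)\cap\partial\R^{\mu}_{+}=i(\partial\m)$ — is needed. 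The remaining steps (extending the metric, checking positive definiteness of $g'$, and checking that $(h,\phi)$ is an isometric embedding) are routine.
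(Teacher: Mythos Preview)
Your proposal is correct and follows essentially the same strategy as the paper: both subtract $\deriv\phi\otimes\deriv\phi$ from the metric, apply Nash's theorem to the residual metric, and append $\phi$ as the last coordinate to obtain the isometric embedding, then use the resulting transversality (neatness) of $i(\m)$ in $\oline{\R}^\mu_+$ to get the adapted retraction. The only differences are minor: the paper extends $\m$ to a \emph{compact} manifold $\m'$ without boundary so that the compact Nash theorem applies directly (your open $\widehat{\m}$ requires the non-compact version, or the trivial fix of passing to a closed extension), and the paper constructs $\Pi_\m$ explicitly---blending the ordinary tubular retraction onto $i'(\m')\subset\R^\mu$ with a local projection that preserves the last coordinate via a cutoff in $x_\mu$---rather than citing the neat tubular neighbourhood theorem as you do.
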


\begin{proof}
By a collar neighborhood theorem, we can assume that \(\m \subset \m'\), where \(\m'\) is a compact Riemannian manifold without boundary and the inclusion is an isometry.
We consider a function \(f \colon \m' \to \R\) such that \(f^{-1} (0) = \partial \m\), \(f^{-1} ([0, \infty)) = \m\) and \(0 <\abs{\deriv f} < 1\) on \(\m'\) with respect to the metric \(g'\) of \(\m'\).
In particular, \(g_0 \defeq g' - \deriv f \otimes \deriv f\) also defines a metric.
By Nash's embedding theorem, there exists a \(\mu \in \N\) and an  embedding \(i_0 \colon \m' \to \R^{\mu - 1}\) which is isometric for the metric \(g_0\).
The mapping \(i' \colon \m' \to \R^{\mu}\) defined by \(i' (x) = (i_0 (x), f (x))\) is then an isometric embedding for \(\m'\) endowed with the metric \(g'\) and \(i\coloneqq i' \big\vert_{\m} \colon \m \to \R^\mu\) is the required embedding.

Since \(i' (\m')\) is an embedded submanifold of the Euclidean space \(\R^\mu\), there exists an open set \(\mathcal{U}' \subset \R^\mu \) and \(\Pi' \in C^\infty (\mathcal{U}', i' (\m'))\) such that \(i'(\m') \subset \mathcal{U}'\) and for every \(x \in i'(\m')\), \(\Pi' (x) = x\).
Moreover, since \(\deriv f \ne 0\) in the construction of the embedding \(i'\), the submanifolds \(i' (\m')\) and \(\partial \R^\mu_+\) are transverse; 
there exists thus an open set \(\mathcal{U}^* \subset \R^{\mu}\) and a \(\delta > 0\) 
such that \(i' (\m') \cap (\R^{\mu - 1}\times (-\delta, \delta)) \subset \mathcal{U}^*\) and a mapping \(\Pi^* \in C^\infty (\mathcal{U}^*, i (\m))\) such that for every \((x', x_\mu) \in \mathcal{U}^*\), 
\(\Pi^* (x', x_\mu) \in \R^{\mu - 1} \times \{x_\mu\}\)
and for every \(x \in i (\m') \cap \mathcal{U}^*\), \(\Pi^* (x) = x\).
By taking the set \(\mathcal{U}^*\) smaller if necessary, we can also assume that for every \(x \in \mathcal{U}^*\) and every \(t \in [0, 1]\), we have \((1 -t) \Pi^* (x) + t x \in \mathcal{U}'\).
We conclude by defining the set
\[
 \mathcal{U} \defeq \brk[\big]{\mathcal{U}^* \cap \oline{\R}^\mu_+} \cup \brk[\big]{\mathcal{U}' \cap ( \R^{\mu - 1}\times (\delta/2, \infty))}, 
\]
and the mapping \(\Pi_{\m} \colon \mathcal{U} \to i (\m)\) for \(x = (x', x_\mu) \in \mathcal{U}\) by
\[
 \Pi_{\m} (x', x_\mu) \defeq \Pi' \brk[big]{\brk{1 - \psi (x_\mu)} \Pi^* (x) + \psi (x_\mu) x},
\]
where the function \(\psi \in C^\infty (\R, [0, 1])\) is taken to satisfy \(\psi (t) = 1\) when \(t \le \delta/2\) and \(\psi (t) = 0\) when \(t \ge \delta\).
\end{proof}

\subsection{Characterization of the trace space}\label{ss:mainthmonmanifold} 
We are now ready to prove \cref{theorem_global_necessary_sufficient} characterizing the traces of Sobolev maps between manifolds. The idea is to first use \cref{proposition_mbdry_embedding_and_retraction} to replace maps with a manifold in the domain to maps defined on a subset of the Euclidean half-space, by composing original maps with the retraction, and next to apply to those modified maps a localized version of \cref{theorem_main_halfspace}.

\begin{proof}%
[\hypertarget{proof:proof}{Proof of \cref{theorem_global_necessary_sufficient}}]%
\resetconstant%
Applying \cref{proposition_mbdry_embedding_and_retraction} we may assume, without loss of generality, that the manifold \(\m\) is identified with its isometrical embedding into the half-space \(\oline{\R}^\mu_+\) and that 
\(\Pi_{\m} \colon  \mathcal{U} \to \mathcal{M}\)
is the corresponding smooth retraction, where the set $\mathcal U \subset \R^{\mu-1}\times [0, \infty)$ is relatively open in the closed half-space \(\oline{\R}^\mu_+\).
We define the sets
\begin{align}\label{eq:Udefiniton}
 \mathcal{U}_0 &\coloneqq \mathcal{U} \cap \partial \R^\mu_+ &
 &\text{ and} &
 \mathcal{U}_+ &\coloneqq \mathcal{U} \cap \R^\mu_+;
\end{align}
we choose the set \(\mathcal{V} \subset \oline{\R}^\mu_+\) relatively open in \(\oline{\R}^\mu_+\) such that \(\mathcal{M} \subset \mathcal V\) and \(\oline{\mathcal{V}} \subset \mathcal{U}\), and  define the sets 
\begin{align}
\label{eq:Vdefinition}
 \mathcal{V}_0 &\defeq \mathcal{V} \cap \partial \R^\mu_+ &
 &\text{ and} &
 \mathcal{V}_+ &\defeq \mathcal{V} \cap \R^\mu_+.
\end{align}

\textsc{Necessary condition.} Fix $\delta>0$. By assumption there exists a map \(U \in \dot{W}^{1, p} (\m, \n)\) such that \(\tr_{\partial \m} U = u\). We define the maps \(\Bar{U} \defeq U \compose \Pi_{\m}\big\vert_{\mathcal{U}_+}\) and \(\Bar{u} \defeq u \compose \Pi_{\m}\big\vert_{\mathcal{U}_0}\), so that in particular \(\Bar{U} \in W^{1, p} (\mathcal{U}_+, \n)\) and \(\tr_{\mathcal{U}_0} \Bar{U} = \Bar{u}\).

We continue by observing that \cref{le:integration_2} and \cref{pr:trace_trails_necessary_2} with \(m = \mu \) admit localized versions. 
First, in \cref{le:integration_2} under the additional assumption that for each \(y \in \Sigma\) we have 
\begin{equation}
\label{eq_shiegeeyooB2oequei9jow9c}
\sigma(y) + d_0 (y) C^{\eta}_\rho \subseteq \mathcal{U}_+,
\end{equation}
the integral in \eqref{eq_aijalanguGh8oochoh2wuel6} can be taken over \(\mathcal{U}_+\) instead of \(\R^\mu_+\), and we thus have  
\begin{equation*}
  \int\limits_{C^{\eta}_{\rho}} \biggl(\int\limits_{\Sigma} F\compose (\sigma +  d_0\xi ) \biggr)\dif \xi
  \le \frac{(\eta \lambda - 1)^\mu(\rho + \seminorm{\sigma}_{\mathrm{Lip}})^{2 \mu}}{\eta^\mu\brk*{(\eta \lambda  - 1) \rho - \seminorm{\sigma}_{\mathrm{Lip}}}^{\mu + 1}}
  \gamma^{\lambda}_{\Sigma_0, \Sigma}
  \int\limits_{{\Sigma_0}} \int\limits_{\mathcal{U}_+} \frac{F (x) x_\mu}{\abs{x - \sigma (z)}^\mu}\dif x \dif z.
\end{equation*}
Indeed, it suffices to observe that the dimension has changed from \(m\) to \(\mu\) and that in view of  the condition \eqref{eq_shiegeeyooB2oequei9jow9c} and of the change of variable \(x = \sigma(y) +  d_0 (y)\xi\), the integration domain of all the integrals with respect to \(x\) can be restricted to the set \(\mathcal{U}_+\).

Next, for the localized version of \cref{pr:trace_trails_necessary_2}, we define the function $\Bar{W}\colon \mathcal{
U}_+\to [0,\infty]$ as in \eqref{eq_wiuw3je7aroo3eeG9vaez4ah}, with $\R^m_+$ replaced by $\mathcal{U}_+$ and $U$ by $\Bar{U}$, we define \(\Bar{w}\colon \mathcal{V}_0 \to [0,\infty]\) by \eqref{eq:defH}, with the integrals restricted to \(\mathcal{U}_+\); we have
\begin{equation}\label{eq:wbyU}
 \int_{\mathcal{V}_0} \Bar{w} \le C \int_{\mathcal{V}_+}|\deriv \Bar{U}|^p.
\end{equation}
By Fubini's theorem, there is an \(h \in \partial \R^\mu_+\) such that \(\partial \m + h \subset \mathcal{V}_0\) and \(\int_{\partial \m + h} \Bar{w} \le \C \int_{\mathcal{V}_0} \Bar{w} < \infty\).
Taking \(h\) to be small enough, we have that  \(\Pi_{\m} \big\vert_{\m + h}: \m + h \to \m\) is a diffeomorphism by the implicit function theorem.
We define \(w \defeq \Bar{w} \compose (\Pi_{\m} \big\vert_{\m + h})^{-1} \colon \partial \m \to [0, \infty]\).

If the mapping \(\sigma \colon  \Sigma \to \m\) is Lipschitz--continuous and if we set \(\Bar{\sigma}=  (\Pi_{\m} \big\vert_{\m + h})^{-1} \compose \sigma\), then
\(
 \seminorm{\Bar{\sigma}}_{\lip}
 \le \Cl{cst_eghietho0shee6eizeishiFe} \seminorm{\sigma}_{\lip}
\). Thus,  \(\seminorm{\sigma}_{\lip}\sup_{\Sigma} d_0\le \delta\) implies for \(\delta = \Bar{\delta}/\Cr{cst_eghietho0shee6eizeishiFe}\)
\begin{equation}\label{eq:smallnessofbarsigma}
\seminorm{\Bar{\sigma}}_{\lip}\sup_{\Sigma} d_0\le \Bar{\delta}. 
\end{equation}
Taking \(\eta = \frac{1}{2} + \frac{1}{2\lambda}\) in \eqref{eq:choiceofrho}, we get 
\begin{equation}\label{eq:currentrho}
\rho=4|\Bar{\sigma}|_{\lip}/(\lambda-1).
\end{equation}
Thus, for any $y\in\Sigma$ we have
\[
 \sigma(y) + d_0(y)C_{\rho}^\eta \subset \mathcal{V}_+ + \sup_{y\in\Sigma} d_0(y) B_\rho\cap \R^\mu_+ 
\]
and combining \eqref{eq:currentrho} with \eqref{eq:smallnessofbarsigma} we obtain
\[
 \sup_{y\in\Sigma} d_0(y)\rho \le \frac{4\Bar{\delta}}{\lambda-1} \le  \frac{4\Cr{cst_eghietho0shee6eizeishiFe} \delta}{\lambda-1}, 
\]
This implies, from the choice of the set $\mathcal{V}$, that for a sufficiently large $\lambda>1$ we have for all $y\in\Sigma$
\[
 \sigma(y) + d_0(y)C_{\rho}^\eta \subset \mathcal{U}_+
\]
and thus condition \eqref{eq_shiegeeyooB2oequei9jow9c} is satisfied. Moreover, since \(\Pi_{\m} \compose \Bar{\sigma} = \sigma\), we have
\[
 \int_{\Sigma_0} \Bar{w} \compose\Bar{\sigma}
 = 
 \int_{\Sigma_0} w \compose \sigma <\infty.
\]
We apply now localized \cref{le:integration_2} and proceed exactly as in the proof of \cref{pr:trace_trails_necessary_2}: For 
 the Lipschitz--continuous function \(\Bar{\sigma} \colon \Sigma \to \mathcal{V}_+\)  with \(\Bar{\sigma} (\Sigma_0) \subset \mathcal{V}_0\), \(\seminorm{\Bar{\sigma}}_{\lip} \le \Bar{\delta}\), and \(\int_{\Sigma_0} \Bar{w} \compose \Bar{\sigma}<\infty\) 
we obtain the existence of a map \(V \in \dot{W}^{1,p} (\Sigma, \n)\) such that \(\tr_{\Sigma_0} V = \Bar{u} \compose \Bar{\sigma} \big\vert_{\Sigma_0}=u \compose \sigma \big \vert_{\Sigma_0}\) and 
\begin{equation}
\label{eq_Fie1Nah8sah7reisirie3Rah}
 \int_{\Sigma} \abs{\deriv V}^p
 \le \gamma^{\lambda}_{\Sigma_0, \Sigma}\, \seminorm{\Bar{\sigma}}_\lip^{p - 1}
 \int_{\Sigma_0} \Bar{w} \compose \Bar{\sigma}\le \Cr{cst_eghietho0shee6eizeishiFe}^{p-1}\,\gamma^{\lambda}_{\Sigma_0, \Sigma} \, \seminorm{\sigma}_\lip^{p - 1}
 \int_{\Sigma_0} w \compose \sigma.
\end{equation}
Multiplying $w$ by a suitable constant we obtain \eqref{eq:againthesame}.
This finishes the proof of the necessity part.

\hypertarget{proof:sufficient}{\textsc{Sufficient condition:} } Let \(u \colon \partial \m \to \n\) and \(w \colon \partial \m \to [0, \infty]\) with 
\(
 \int_{\partial \m} w <\infty
\) be Borel--measurable maps given by assumptions. Since $\Pi_{\M}(\mathcal{U}_0)\subset \partial \m$, the map \(\Bar{w} \defeq w \compose \Pi_{\m} \colon \mathcal{U}_0 \to [0, \infty]\) is well-defined. If the mapping \(\Bar{\sigma} \colon  \Sigma \to \mathcal{U}\) is Lipschitz--continuous and if we set \(\sigma \defeq \Pi_{\m} \compose \Bar{\sigma}\colon \Sigma \to \m\), then 
\begin{equation*}
 \seminorm{\sigma}_{\lip} \le \Cl{cst_Lei6ahpee6imekaengohzae7} \seminorm{\Bar{\sigma}}_{\lip}
\end{equation*}
and 
\begin{equation*}
\int_{\Sigma_0} w \compose \sigma 
= \int_{\Sigma_0} \Bar{w} \compose \Bar{\sigma},
\end{equation*}
so that if $\Bar{\sigma}(\Sigma_0)\subset \mathcal{V}_0$, then for $\Bar{\delta} = \delta/\Cr{cst_Lei6ahpee6imekaengohzae7}$ the condition \(\seminorm{\Bar{\sigma}}_{\lip}\sup_\Sigma d_0 \le \Bar{\delta}\) implies $\seminorm{\sigma}_\lip \sup_{\Sigma} d_0 \le \delta$, and if \(\int_{\Sigma_0} \Bar{w} \compose \sigma  < \infty\), then by assumption there exists a map \(V \in \dot{W}^{1, p} (\Sigma, \n)\) such that \(\tr_{\Sigma_0} V = u \compose \sigma\big\rvert_{\Sigma_0}\) and 
\begin{equation}\label{eq:localVexistsinglobal}
 \int_{\Sigma} \abs{\deriv V}^p
 \le \gamma^{\lambda}_{\Sigma_0, \Sigma} \seminorm{\sigma}_\lip^{p - 1}
 \int_{\Sigma_0} w \compose \sigma.
\end{equation}
Thus by construction of \(\sigma\) and \(\Bar{w}\), \(\tr_{\Sigma_0} V = \Bar{u} \compose \Bar{\sigma}\big\rvert_{\Sigma_0}\), where we have set again $\Bar{u}\coloneqq u \compose \Pi_{\m}\big\rvert_{\mathcal{U}_0}$, and
\begin{equation*}
  \int_{\Sigma} \abs{\deriv V}^p
 \le \Cr{cst_Lei6ahpee6imekaengohzae7}^{p - 1} \gamma^{\lambda}_{\Sigma_0, \Sigma} \seminorm{\Bar{\sigma}}_\lip^{p - 1}
 \int_{\Sigma_0} \Bar{w} \compose \Bar{\sigma}.
\end{equation*}

For small enough \(\kappa_0 > 0\), we have by construction of \(\mathcal{U}_+\) and \(\mathcal{V}_+\)
\[
 \mathcal{V}_+ \subset \mathcal{W}_+
 \defeq \bigcup \{Q \cap \R^\mu_+ \in \mathcal{Q}^{\kappa_0, \mu} \colon Q \cap \R^\mu_+ \subset \mathcal{U}_+\}
\]
and 
\[
 \mathcal{V}_0 \subset \mathcal{W}_0 \defeq \bigcup\{Q\cap\partial\R^\mu_+\in \mathcal{Q}^{\kappa_0,\mu}\colon Q\cap\partial\R^\mu_+ \subset \mathcal{U}_0\},
\]
where the cubication $\mathcal{Q}^{\kappa_0,\mu}$ is defined as in \eqref{eq:cubicationdefintion1}, with $m$ replaced by $\mu$. 
We define for \(\ell \in \{1, \dotsc, \mu\}\) and \(j \in \N\), the sets
\begin{gather}
\label{eq_phieY8GiZohzae2yookepaif}
 \mathcal{W}^{j, \ell}_+ \defeq  \bigcup \{Q \cap 
\R^\mu_+ \colon Q \in \mathcal Q^{\kappa_j, \ell} \text{ and } Q \cap 
\R^\mu_+ \subset \mathcal{W}_+\},\\
 \label{eq_yich9dephaC0eeg0ooz5tieF}
\mathcal{W}^{j, \ell}_0 \defeq  \bigcup \{Q \cap \partial \R^\mu_+ \colon Q \in \mathcal Q^{\kappa_j, \ell + 1} \text{ and } Q \cap \partial \R^\mu_+ \subset \mathcal{W}_0\},\\
\intertext{and}
\mathcal{W}^{j, \ell} \defeq\mathcal{W}^{j, \ell}_+ \cup \mathcal{W}^{j, \ell-1}_0, 
\end{gather} 
where \(\kappa_j \defeq 2^{-j} \kappa_0\). 
If \(j\) is large enough,
then for every \(h \in [0, \kappa_j]^{\mu - 1} \times \{0\}\), we have
\[
 \m \subset \mathcal{W}^{j, \mu} + h \subset \mathcal{U}_+.
\]
We let $\Sigma^j$ be a sequence of homogeneous simplicial complexes, $\Sigma_0^j\subset \Sigma^j$ be a sequence of subcomplexes of codimension \(1\) 
and \(\sigma^j \colon \Sigma^j \to \mathcal{W}^{j, \floor{p}}\) be a simplicial parametrization such that \(\sigma^j (\Sigma^j_0) = \mathcal{W}_0^{j,\floor{p-1}}\). We observe that for every \(j \in \N\),
\begin{equation*}
 \abs{\sigma^j}_{\lip} \sup_{\Sigma^j} d_0 \le \Cl{constant},
\end{equation*}
so taking \(\Bar{\delta}=\Cr{constant}\) we get \(\abs{\sigma^j}_{\lip} \sup_{\Sigma^j} d_0 \le \Bar{\delta}\).
Moreover, we have for any $\lambda>0$
\begin{equation*}
\sup_{j \in \N} 
 \gamma^{\lambda}_{\Sigma^j_0, \Sigma^j} < \infty. 
\end{equation*}
Thus, as in \eqref{eq:localVexistsinglobal}, we obtain the existence of maps $V^j\in \dot{W}^{1,p}(\Sigma^j,\n)$. We then may proceed as in the proofs of \cref{pr:(2)to(3),pr:(3)to(1)} to construct a map \(\Bar{U} \in \dot{W}^{1, p} (\mathcal{V}_+, \n)\) such that \(\tr_{\mathcal{V}_0} \Bar{U} = \Bar{u}\) and
\[
 \int_{\mathcal{V}_+} \abs{\deriv \Bar{U}}^p
 \le \C \int_{\mathcal{U}_0} \Bar{w}
 \le \C \int_{\partial \m} w.
\]
By Fubini's theorem there is a set of positive measure of \(h \in \partial \R^\mu_+\), such that we have \(\m + h \subset \mathcal{V}\), \(\Bar{U}\big\vert_{\m + h} \in \dot{W}^{1, p} (\m + h, \n)\), \(\tr_{\partial \m + h} \Bar{U}\big\vert_{\m + h} = \Bar{u}\big\vert_{\partial \m + h}\), and 
\[
 \int_{\m + h } \abs{\deriv \Bar{U}}^p \le \C \int_{\mathcal{V}_+} \abs{\deriv \Bar{U}}^p.
\]
For such \(h\), we set \(U \defeq \Bar{U} \compose (\Pi_{\m} \big\vert_{\m + h})^{-1} \in \dot{W}^{1, p} (\m, \n)\) and we have \(\tr_{\partial \m} U = u\) on \(\partial \m\) and 
\begin{equation}\label{eq:Ubyw}
 \int_{\m} \abs{\deriv U}^p
 \le \C \int_{\partial \m} w.
\end{equation}
This finishes the proof of the sufficiency part for $\delta = \Cr{cst_Lei6ahpee6imekaengohzae7}\Cr{constant}$. 
\end{proof}

\begin{remark}
In view of \eqref{eq:wbyU} and \eqref{eq:Ubyw}, the infima of \(\int_{\M} \abs{\deriv U}^p\) and \(\int_{\partial \M} w\) are comparable.
\end{remark}

\subsection{Combining a qualitative and quantitative condition}\label{ss:combining}
In this Section we focus on proving \cref{proof_global_decomposition}. Let us first remark that if $p\notin \N$, then since $\dim \Sigma = \floor{p}$ and $V\in \dot{W}^{1,p}(\Sigma,\n)$ we obtain by the Morrey--Sobolev embedding and the homotopy extension property that the condition \ref{it_Chea0siechoa5uTieca2Xooc} is equivalent to the existence of \(V \in C (\Sigma, \n)\) such that \(V \vert_{\Sigma_0} = u \compose \sigma\) almost everywhere on \(\Sigma_0\).

The first tool of the proof is the following proposition about the extension of boundary data already in \(W^{1, p} (\partial \m, \n)\) which can be extended trivially to a neighborhood of the boundary.

\begin{proposition}
\label{proposition_trace_very_regular}
Let \(\M\) be a compact Riemannian manifold with boundary \(\partial \M\) and let \(u \in \dot{W}^{1,p} (\partial \M ,\n)\) be a Borel--measurable map.\ 

\noindent
Suppose that there exists a summable function \(w \colon  \partial \M \to [0,\infty]\) with the following property: if \(\Sigma\) is a homogeneous simplicial complex of dimension \(\floor{p}\), \(\Sigma_0\subset \Sigma\) is a subcomplex  of \(\Sigma\) of dimension \(\floor{p - 1}\), \(\sigma \colon  \Sigma \to \M\) is 
a Lipschitz--continuous map such that \(\sigma (\Sigma_0) \subseteq \partial \M\) satisfying \(\int_{\Sigma_0} w \compose \sigma < \infty\),
then \(u \compose \sigma \big\vert_{\Sigma_0}\) is homotopic in \(\mathrm{VMO} (\Sigma_0, \n)\) to the restriction of \(V \big\vert_{\Sigma_0}\) for some \(V \in C (\Sigma, \n)\).

\noindent
Then there exists an extension \(U \in \dot{W}^{1, p} (\m, \n)\) with \(\tr_{\partial \m} U = u\) on \(\partial \m\).
\end{proposition}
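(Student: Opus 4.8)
The plan is to build the extension by a two--region construction that is reconciled through a Fubini/generic--translation limit, in the spirit of the proof of \cref{pr:(3)to(1)}: near \(\partial\M\) one exploits the hypothesis \(u\in\dot W^{1,p}(\partial\M,\n)\) to extend \(u\) trivially on a thin collar, while away from \(\partial\M\) one uses the VMO--homotopy hypothesis to fix the homotopy type on the \(\floor p\)--skeleton of a fine mesh and then performs a homogeneous (radial) extension to the bulk, which is possible since \(p<\floor p+1\). First I would reduce to a flat model exactly as in the proof of \cref{theorem_global_necessary_sufficient}: by \cref{proposition_mbdry_embedding_and_retraction} we may assume \(\M\subset\oline\R^\mu_+\) with \(\partial\M=\M\cap\partial\R^\mu_+\) and a smooth retraction \(\Pi_\M\colon\mathcal U\to\M\); it then suffices to produce \(\bar U\in\dot W^{1,p}(\mathcal V_+,\n)\) with \(\tr_{\mathcal V_0}\bar U=\bar u:=u\compose\Pi_\M\) on a fixed relatively open neighbourhood \(\mathcal V_+\) of \(\M\cap\R^\mu_+\), and afterwards to restrict to a generic translate \(\M+h\subset\mathcal V\) and set \(U:=\bar U\compose(\Pi_\M\vert_{\M+h})^{-1}\). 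Replacing \(w\) by \(w+\abs{\deriv u}^p\) plus a Lebesgue--point indicator, which keeps \(w\) summable because \(u\in\dot W^{1,p}(\partial\M,\n)\), we may assume that finiteness of \(\int_{\Sigma_0}w\compose\sigma\) forces \(u\compose\sigma\vert_{\Sigma_0}\in\dot W^{1,p}(\Sigma_0,\n)\); since \(\Pi_\M\) is smooth, this also gives \(\bar u\in\dot W^{1,p}_{\loc}(\mathcal U_0,\n)\).

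For a cubication of mesh \(\kappa\) of a neighbourhood of \(\M\cap\R^\mu_+\) in \(\oline\R^\mu_+\), realised simplicially as in the proof of \cref{pr:(2)to(3)} and translated by a generic vector \(h\), let \(\Sigma^{\kappa}\) be the \(\floor p\)--skeleton and \(\Sigma^{\kappa}_0\subset\Sigma^{\kappa}\) the \(\floor{p-1}\)--skeleton lying on \(\partial\R^\mu_+\); put \(\sigma:=\Pi_\M\vert_{\Sigma^{\kappa}}\), a Lipschitz map into \(\M\) with \(\sigma(\Sigma^{\kappa}_0)\subset\partial\M\), and choose \(h\) so that \(\int_{\Sigma^{\kappa}_0}w\compose\sigma<\infty\). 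Since \(\dim\Sigma^{\kappa}_0=\floor{p-1}<p\), both \(u\compose\sigma\vert_{\Sigma^{\kappa}_0}\in\dot W^{1,p}(\Sigma^{\kappa}_0,\n)\) and the witness \(V_0\vert_{\Sigma^{\kappa}_0}\) from the hypothesis are H\"older--continuous, so the VMO--homotopy between them is an ordinary homotopy, and the homotopy extension property for the CW pair \((\Sigma^{\kappa}_0,\Sigma^{\kappa})\) yields a continuous extension of \(u\compose\sigma\vert_{\Sigma^{\kappa}_0}\) over \(\Sigma^{\kappa}\) in the prescribed class. On the cubes of the skeleton within \(O(\kappa)\) of \(\partial\R^\mu_+\) I would instead use the product extension \((x',x_\mu)\mapsto\bar u(x')\), which is \(\dot W^{1,p}\) with exact trace \(\bar u\) and energy \(O\bigl(\kappa\int_{\partial\M}\abs{\deriv u}^p\bigr)\); away from those cubes the continuous extension above can be taken Lipschitz after a relative homotopy, so a homogeneous extension over the whole mesh via \cref{le:compositionwithprojection,lemma_pyramid} produces \(\bar U^{\kappa,h}\in\dot W^{1,p}(\mathcal V_+,\n)\) whose energy is bounded uniformly in \(\kappa\), the \(\floor p\)--volume \(O(\kappa^{\floor p-\mu})\) of the skeleton being offset by the factor \(\kappa^{\mu-\floor p}\) produced by the homogeneous extension.

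Finally, \(\bar U^{\kappa,h}\) has trace \(\bar u\compose(\mathcal P^{\kappa,\floor p}(\cdot-h)+h)\) on \(\mathcal V_0\), so choosing \(h=h_\kappa\) generically and using \cref{le:mainconvergenceofbdrydata} together with \(\bar u\in L^p_{\loc}(\mathcal V_0)\), this trace converges in \(L^p_{\loc}(\mathcal V_0)\) to \(\bar u\) as \(\kappa\to0\); extracting a weakly convergent subsequence of \((\bar U^{\kappa_j,h_{\kappa_j}})_j\) gives \(\bar U\in\dot W^{1,p}(\mathcal V_+,\n)\) with \(\tr_{\mathcal V_0}\bar U=\bar u\), and restricting to a generic translate of \(\M\) and composing with \((\Pi_\M)^{-1}\) finishes the proof. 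I expect the main obstacle to be the interface between the two regions: one must glue, along the cubes straddling the outermost layer \(\{x_\mu\approx\kappa\}\), the Lipschitz model obtained from the VMO--homotopy with the product extension of \(\bar u\), and this is exactly where both hypotheses enter simultaneously---the \(\dot W^{1,p}\)--regularity of \(u\) to control the datum there and the existence of a VMO--homotopy to a continuous map to guarantee that the gluing is through maps in the right homotopy class---while one keeps track of every power of \(\kappa\) so that the homogeneous extension remains energy--bounded and the boundary trace still converges.
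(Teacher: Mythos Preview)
Your construction contains a genuine gap: you need the sequence \(\bar U^{\kappa,h}\) to have \emph{uniformly} bounded \(\dot W^{1,p}\)--energy as \(\kappa\to 0\), but the VMO--homotopy hypothesis is purely qualitative and provides no bound whatsoever on the energy (or Lipschitz constant) of the continuous extension \(V_0\) on the \(\floor p\)--skeleton. Your sentence ``the continuous extension above can be taken Lipschitz after a relative homotopy'' is correct for each fixed \(\kappa\), but the resulting Lipschitz constant may depend badly on \(\kappa\) (the map produced by the hypothesis changes with the skeleton); consequently the cancellation ``\(\floor p\)--volume \(O(\kappa^{\floor p-\mu})\) offset by \(\kappa^{\mu-\floor p}\)'' does not yield a \(\kappa\)--independent bound. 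There is also an internal inconsistency: if the collar region is filled by the product extension \((x',x_\mu)\mapsto\bar u(x')\), then the trace of \(\bar U^{\kappa,h}\) on \(\mathcal V_0\) is exactly \(\bar u\), not \(\bar u\compose(\mathcal P^{\kappa,\floor p}(\cdot-h)+h)\); so either the limit argument at the end is superfluous, or the product extension is not really being used, in which case the uniform bound is needed and unavailable.

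The paper sidesteps both difficulties by working with a \emph{single} fixed mesh \(\kappa_j\). It takes the continuous map \(\bar U^{\floor p}\) on the \(\floor p\)--skeleton furnished by the hypothesis (after regularization and the homotopy extension property so that its trace on the boundary \((\floor{p-1})\)--skeleton is exactly \(\bar u\)), and then defines \(\bar U^\ell\) inductively for \(\ell=\floor p+1,\dots,\mu\) by the simple rule: on every boundary \(\ell\)--cell set \(\bar U^\ell=\bar u\) (legitimate because \(u\in\dot W^{1,p}(\partial\M,\n)\) gives \(\bar u\in\dot W^{1,p}\) on each such cell), and on every interior \(\ell\)--cell set \(\bar U^\ell=\bar U^{\ell-1}\compose P_{\kappa_j,\ell}\). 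Traces match across common faces by the inductive hypothesis, so \(\bar U:=\bar U^\mu\in\dot W^{1,p}\) and \(\tr_{\mathcal V_0}\bar U=\bar u\) \emph{exactly}; no uniformity in \(\kappa\) and no weak limit are required, only finiteness of the energy for this one mesh. The final passage from \(\bar U\) to \(U\) on \(\M\) is the Fubini/translation argument you also describe. Your two--region idea is morally the same split (boundary cells carry \(\bar u\), interior cells carry the homogeneous extension), but the paper implements it cell--by--cell in the skeleton rather than layer--by--layer in \(x_\mu\), which makes the gluing automatic and removes any need for a limit.
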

\begin{proof}
We use the same definitions as in the 
\hyperlink{proof:proof}{Proof of \cref{theorem_global_necessary_sufficient}}.
In particular, for the summable function $w\colon \partial \m \to [0,\infty]$ from the assumptions, we define $\Bar{w}\coloneqq w\compose \Pi_\m$.
If \(\Bar{\sigma} \colon  \Sigma \to \m\) is Lipschitz--continuous and if \(\int_{\Sigma_0} \Bar{w} \compose \Bar{\sigma}  < \infty\), then, defining \(\sigma \defeq \Pi_{\m} \compose \Bar{\sigma}\), we also have that \(\sigma\) is Lipschitz--continuous and \(\int_{\Sigma_0} w \compose \sigma < \infty\). 
Thus, by assumption,  the map \(\Bar{u} \compose \Bar{\sigma} = u \compose \sigma\) is homotopic in \(\mathrm{VMO} (\Sigma_0, \n)\) to the restriction of \(V \big\vert_{\Sigma_0}\) for some \(V \in C (\Sigma, \n)\), where $\Bar{u}=u\compose\Pi_{\m}\big\rvert_{\mathcal{U}_0}$.

Again, proceeding as in the proof of the \hyperlink{proof:sufficient}{\textsc{sufficient condition}} in \cref{theorem_global_necessary_sufficient}, we obtain that for some fixed large enough \(j \in \N\) and each $h\in[0,\kappa_j]^{\mu-1}\simeq [0,\kappa_j]^{\mu-1}\times\{0\}$, where $\kappa_j=2^{-j}\kappa_0>0$, we have
\[
 \m \subset \mathcal{W}^{j,\mu} +h \subset \mathcal{U}
\]
and for almost every \(h \in  [0, \kappa_j]^{\mu - 1} \simeq [0, \kappa_j]^{\mu - 1} \times \{0\}\), and for each \(\ell \in \{0, \dotsc, \mu - 1\}\), we have
\begin{equation}\label{eq:goodhforbaru}
  \Bar{u}\big\vert_{\mathcal{W}^{j, \ell}_0 + h} \in \dot{W}^{1, p} (\mathcal{W}^{j, \ell}_0 + h, \n)
\end{equation}
and  \(\Bar{u} \big\vert_{\mathcal{W}^{j, \floor{p - 1}}_0 + h}\)
is homotopic in \(\mathrm{VMO}(\mathcal{W}^{j, \floor{p - 1}}_0 + h, \n)\) to the restriction $V\big\rvert_{\mathcal{W}^{j, \floor{p - 1}}_0 + h}$ of a continuous map \(V \in C (\mathcal{W}^{j, \floor{p}} + h, \n)\). By a regularization argument and the homotopy extension property, there exists a \(\Bar{U}^{\floor{p}} \in C (\mathcal{W}^{j, \floor{p}} + h, \n)\) such that \(\tr_{\mathcal{W}^{j, \floor{p - 1}}_0 + h} \Bar{U}^{\floor{p}} = \Bar{u}\).

Now we define inductively maps $\Bar{U}^{\ell}$ for $\ell \in \{\floor{p+1},\dots,\mu\}$. Given \(\Bar{U}^{\ell - 1} \in \dot{W}^{1, p} (\mathcal{W}^{j, \ell-1}_+ + h, \n)\), we define \(\Bar{U}^\ell \in \dot{W}^{1, p} (\mathcal{W}^{j, \ell}_+ + h, \n)\) on each $Q\in \mathcal{Q}^{\kappa_j, \ell}$ by
\[
\Bar{U}^\ell \defeq\left\{
 \begin{array}{ll}
  \Bar{u} &\text{ if } Q \subset \mathcal{W}^{j, \ell}_0\\
  \Bar{U}^{\ell - 1} (P_{\kappa_j, \ell} (\cdot - h) + h) &\text{ if } Q \subset \mathcal{W}^{j, \ell}_+, \text{(but } Q \not \subset \mathcal{W}^{j, \ell}_0),
 \end{array}
 \right.
\]
where the projection $P_{\kappa_j, \ell}$ is defined in \eqref{eq_vuib0ViengaQueineiy9xaxi} and we take any $h\in[0,\kappa_j]^{\mu-1}$ on which \eqref{eq:goodhforbaru} holds. In view of \cref{lemma_pyramid}, we have \(\Bar{U}^\ell \in \dot{W}^{1, p} (\mathcal{W}^{j, \ell}_+ + h, \n)\).
We set \(\Bar{U} = \Bar{U}^\mu\) and in order to obtain \(U \in \dot{W}^{1, p} (\M, \n)\) we use a Fubini argument exactly as at the end of the proof of the \hyperlink{proof:sufficient}{\textsc{sufficient condition}} of \cref{theorem_global_necessary_sufficient}.
\end{proof}

The second tool is a localized version of \cref{pr:necessaryVMO}.

\begin{proposition}
\label{pr:necessaryVMO_manifold}
 Let \(2 \le m \in \N\) and \(p\in (1,\infty)\).
If \(u = \tr_{\partial \m} U\) for some \(U \in \dot{W}^{1, p} (\m, \n)\), then there exists a Borel--measurable function \(w \colon \partial \m \to [0, \infty]\) with
\(
 \int_{\partial \m} w <\infty
\)
such that if $\Sigma$ is a finite simplicial complex of dimension at most \(\floor{p}\), $\Sigma_0 \subset \Sigma$ is a subcomplex of codimension 1, \(\sigma \colon \Sigma \to \m\) is a Lipschitz--continuous  with $\sigma(\Sigma_0) \subset \partial \m$, and
\(
\int_{\Sigma_0} w \compose \sigma < \infty, 
\)
then \(u \compose \sigma\big\vert_{\Sigma_0}\) is homotopic in \(\mathrm{VMO} (\Sigma_0, \n)\) to \(V \big\vert_{\Sigma_0}\) for some \(V \in C (\Sigma, \n)\).
\end{proposition}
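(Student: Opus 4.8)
The plan is to reduce \cref{pr:necessaryVMO_manifold} to the half-space statement \cref{pr:necessaryVMO} by exactly the localization device already used in the proof of \cref{theorem_global_necessary_sufficient}. First I would apply \cref{proposition_mbdry_embedding_and_retraction} to identify \(\m\) with an isometrically embedded submanifold of \(\oline{\R}^\mu_+\) satisfying \(\partial\m = \m\cap\partial\R^\mu_+\), together with the smooth retraction \(\Pi_\m\colon\mathcal{U}\to\m\) defined on a relatively open neighborhood \(\mathcal{U}\subseteq\oline{\R}^\mu_+\) of \(\m\); shrinking \(\mathcal{U}\) if necessary so that \(\Pi_\m\) has bounded differential there, and fixing an intermediate relatively open set \(\mathcal{V}\) with \(\m\subseteq\mathcal{V}\) and \(\oline{\mathcal{V}}\subseteq\mathcal{U}\), I would set \(\Bar{U}\defeq U\compose\Pi_\m\in\dot{W}^{1,p}(\mathcal{U}_+,\n)\) and \(\Bar{u}\defeq\tr_{\mathcal{U}_0}\Bar{U}=u\compose\Pi_\m\big\vert_{\mathcal{U}_0}\), using the notation \(\mathcal{U}_+,\mathcal{U}_0,\mathcal{V}_+,\mathcal{V}_0\) of the proof of \cref{theorem_global_necessary_sufficient}.

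Next I would carry over the proof of \cref{pr:necessaryVMO} with \(U\), \(\R^\mu_+\) and \(m\) replaced throughout by \(\Bar{U}\), \(\mathcal{U}_+\) and \(\mu\): I would define \(W\colon\mathcal{U}_+\to[0,\infty]\) from the (localized) maximal function of \(\abs{\deriv\Bar U}\) as in \eqref{eq_oongeikahhu1sheiS0Nai3ui}, define an auxiliary density on \(\mathcal{V}_0\) as in \eqref{eq_shu9adoeTh9sa1tevao2Jooz} with the integral restricted to \(\mathcal{U}_+\) and the same choice of the exponent \(\gamma\), bound its \(\mathcal{V}_0\)-integral by \(C\int_{\mathcal{V}_+}\abs{\deriv\Bar U}^p<\infty\) via the Hardy--Littlewood maximal theorem, and finally let \(w\) be the restriction of this density to \(\partial\m\subseteq\mathcal{V}_0\); since \(\sigma\) already maps into \(\m\) one has \(\Pi_\m\compose\sigma=\sigma\), \(w\compose\sigma=\Bar w\compose\sigma\) on \(\Sigma_0\), and \(\Bar u\compose\sigma=u\compose\sigma\) on \(\Sigma_0\). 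The only step that is not purely cosmetic is the use of \cref{le:integration_2}: its localized form, already recorded in the proof of \cref{theorem_global_necessary_sufficient}, shows that \emph{provided} \(\sigma(y)+d_0(y)C^\eta_\rho\subseteq\mathcal{U}_+\) for every \(y\in\Sigma\) — condition \eqref{eq_shiegeeyooB2oequei9jow9c} — the change of variables \(x=\sigma(y)+d_0(y)\xi\) keeps every \(x\)-integral inside \(\mathcal{U}_+\), so that, as \(\gamma\le 1\), from \(\int_{\Sigma_0}w\compose\sigma<\infty\) one still extracts a \(\xi\in C^\eta_\rho\) with \(\int_\Sigma W\compose\sigma_\xi<\infty\), where \(\sigma_\xi\defeq\sigma+d_0\xi\) (cf.\ \eqref{eq_aiqu4quohn9aeSheeghoeshi}) agrees with \(\sigma\) on \(\Sigma_0\) and maps \(\Sigma\setminus\Sigma_0\) into \(\mathcal{V}_+\). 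To secure \eqref{eq_shiegeeyooB2oequei9jow9c} I would exploit that, in contrast with \cref{pr:necessaryVMO_manifold} itself, the parameter \(\lambda>1\) governing \cref{le:integration_2} (and with it \(\eta=\tfrac{\lambda+1}{2\lambda}\) and \(\rho=4\seminorm{\sigma}_{\lip}/(\lambda-1)\)) may be chosen \emph{depending on the given \(\Sigma\) and \(\sigma\)} — this leaves \(w\) unchanged, since \(w\) depends only on \(U\) and \(\gamma\) — taking \(\lambda\) so large that \(\rho\sup_\Sigma d_0<\dist(\m,\R^\mu\setminus\mathcal{V})\); because \(C^\eta_\rho\subseteq B_\rho\cap\{x_\mu>\eta\rho\}\) and \(\sigma_\mu\ge 0\), the displaced set then lies in \(\R^\mu_+\) within distance \(\rho\sup_\Sigma d_0\) of \(\sigma(y)\in\m\), hence inside \(\mathcal{V}_+\subseteq\mathcal{U}_+\).

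With \(\sigma_\xi\) in hand, the rest of the proof of \cref{pr:necessaryVMO} transfers verbatim: the Lusin--Lipschitz inequality and the three averaged estimates \eqref{eq_ohghoo7seixo3sareiCh9que}, \eqref{eq_eitahNeze5ahT9pheiG4Ahth}, \eqref{eq_IeN5aghahjie1aiLoh9Eot2j} (now written with \(\Bar U\) on \(\mathcal{V}_+\) and \(\Bar u\compose\sigma=u\compose\sigma\) on \(\Sigma_0\)) yield \eqref{eq_biesohthu2Oolo5niree1eib}--\eqref{eq_ieGaeFeifah9yebaa1joisoo}, so that the mollified maps \(V_\delta=\fint_{B_\delta^\Sigma(\cdot)}\Bar U\compose\sigma_\xi\) and \(v_\delta=\fint_{B_\delta^{\Sigma_0}(\cdot)}u\compose\sigma\) satisfy \(\dist(V_\delta,\n)\to0\), \(\dist(v_\delta,\n)\to0\), \(\norm{V_\delta-v_\delta}_{L^\infty(\Sigma_0)}\to0\), and \(\Pi_\n\compose v_\delta\to u\compose\sigma\big\vert_{\Sigma_0}\) in \(\mathrm{VMO}(\Sigma_0,\n)\); choosing \(\delta\) small, \(\Pi_\n\compose V_\delta\in C(\Sigma,\n)\) and its restriction to \(\Sigma_0\) is homotopic to \(\Pi_\n\compose v_\delta\), which gives the claimed \(\mathrm{VMO}\)-homotopy. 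The main obstacle is precisely the domain-containment condition \eqref{eq_shiegeeyooB2oequei9jow9c} — keeping the perturbed trails \(\sigma_\xi\) inside the neighborhood on which \(\Bar U\) is defined — and it is overcome, as above, by enlarging \(\lambda\) rather than by any hypothesis on \(\seminorm{\sigma}_{\lip}\sup_\Sigma d_0\); the remaining differences from \cref{pr:necessaryVMO}, namely the bounded-differential bookkeeping for \(\Pi_\m\) and the John-domain property of the small sets \(\mathcal{V}_+\cap B_\rho(x)\) needed for the Sobolev representation formula \eqref{eq_Ohr5aeghoomaeWee5oosae5e}, are routine.
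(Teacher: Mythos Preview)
Your plan is exactly the ``straightforward variant'' the paper alludes to: localize via \cref{proposition_mbdry_embedding_and_retraction} and rerun the proof of \cref{pr:necessaryVMO} on \(\mathcal{U}_+\). Your key observation---that \(\lambda\) (hence \(\eta,\rho\)) may be chosen depending on \(\sigma\) and \(\Sigma\), since \(w\) itself is \(\lambda\)-independent and only \emph{finiteness} of \(\int_\Sigma W\circ\sigma_\xi\) is needed---is precisely what allows you to secure the containment \eqref{eq_shiegeeyooB2oequei9jow9c} without the smallness hypothesis \(\seminorm{\sigma}_{\lip}\sup_\Sigma d_0\le\delta\) that was required in \cref{theorem_global_necessary_sufficient}.

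One step deserves more care. You define \(w\) as the restriction of the \((\mu-1)\)-dimensional density \(\Bar w\) to the \((m-1)\)-dimensional submanifold \(\partial\m\subset\mathcal{V}_0\), but \(\int_{\partial\m} w<\infty\) does not follow from \(\int_{\mathcal{V}_0}\Bar w<\infty\) alone: an \(L^1(\mathcal{V}_0)\) function need not restrict integrably to a lower-dimensional slice. The clean fix is the Fubini--translate device already used in the necessary part of \cref{theorem_global_necessary_sufficient}: choose \(h\) small with \(\int_{\partial\m+h}\Bar w<\infty\), set \(w\defeq\Bar w\circ(\Pi_\m\vert_{\partial\m+h})^{-1}\), and replace \(\sigma\) by \(\Bar\sigma\defeq(\Pi_\m\vert_{\m+h})^{-1}\circ\sigma\); then \(w\circ\sigma=\Bar w\circ\Bar\sigma\) and \(u\circ\sigma=\Bar u\circ\Bar\sigma\) on \(\Sigma_0\), and the remainder of your argument goes through verbatim with \(\Bar\sigma\) in place of \(\sigma\). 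With this adjustment the proposal is complete.
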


We omit the proof of \cref{pr:necessaryVMO_manifold} which is a straightforward variant of the proof of \cref{pr:necessaryVMO_manifold}.

\begin{proof}%
[Proof of \cref{proof_global_decomposition}]
\textsc{Necessary condition.}
The condition \ref{it_kuixoongeeleag7aeVeiyie4} follows immediately from \cref{theorem_global_necessary_sufficient} since the condition  \(\sigma (\Sigma) \subseteq \partial \m\) implies \(\sigma (\Sigma) \subseteq \m\) and \(\sigma(\Sigma_0) \subseteq \partial \m\); the condition \ref{it_Chea0siechoa5uTieca2Xooc} follows from \cref{pr:necessaryVMO_manifold}. 

\textsc{Sufficient condition.}
In view of the assumption in \ref{it_kuixoongeeleag7aeVeiyie4}, by a variant  of \cref{theorem_global_necessary_sufficient} applied to the manifold \(\partial \m \times (0, 1)\) with boundary \(\partial \m \times \{0\}\)\footnote{\Cref{theorem_global_necessary_sufficient}, is proved for \(\m\) compact; here we have \(\m = \partial\m \times [0, 1)\) to which the construction of the isometric embedding and retraction in \(\R^\mu_+\) also apply.}
 there exists a map
\(V \in \dot{W}^{1, p} (\partial \m \times [0, 1], \n)\) such that \(\tr_{\partial \m \times \{0\}} V = u\). By Fubini's theorem, for almost every \(t \in (0, 1]\), \(V \vert_{\partial\m \times \{t\}} = \tr_{\partial\m \times \{t\}} V \in \dot{W}^{1, p}(\partial\m \times \{t\}, \n)\). Through a suitable rescaling in the \(t\) variable we can assume without loss of generality that this is the case for \(t = 1\).

By \cref{pr:necessaryVMO_manifold} applied to the manifold \(\partial \m \times [0,1]\) with boundary \(\partial \m \times \{0, 1\}\) and the boundary map $V \vert_{\partial \m \times \{0,1\}}$, there exists a summable function \(\Tilde{w} \colon \partial \m \times \{0, 1\}\to[0,\infty]\) such that for every finite homogeneous simplicial complex \(\Tilde{\Sigma}\) of dimension \(\floor{p}\), every subcomplex \(\Tilde{\Sigma}_0\subset \Tilde{\Sigma}\) of codimension \(1\), every Lipschitz--continuous map \(\Tilde{\sigma} \colon \Tilde{\Sigma} \to \partial \M \times [0, 1]\) satisfying \(\Tilde{\sigma} (\Tilde{\Sigma}_0) \subset \partial \m \times \{0, 1\}\) and 
\(\int_{\Tilde{\Sigma}_0} \Tilde{w} \compose \Tilde{\sigma} <\infty\), the map \(V \compose \Tilde{\sigma} \big\vert_{\Tilde{\Sigma}_0}\) is homotopic in $\mathrm{VMO}(\Tilde{\Sigma}_0,\n)$ to the restriction to $\Tilde{\Sigma}_0$ of a continuous map from \(\Tilde{\Sigma}\) to \(\n\). 

In order to apply \cref{proposition_trace_very_regular}, we define $\Hat{w}\colon \partial \m \to [0,\infty]$ by \(\Hat{w} \defeq  \Tilde{w} (\cdot, 0) + \Tilde{w} (\cdot, 1) + w\), where $w$ is the summable map given by assumptions. If \(\Sigma\) is a finite \(\floor{p}\)--dimensional simplicial complex and \(\Sigma_0\) is a \(\floor{p - 1}\)--dimensional subcomplex, and if \(\sigma \colon \Sigma \to \m\) is Lipschitz--continuous such that \(\sigma (\Sigma_0) \subset \partial \m\), we define \(\Tilde{\Sigma}\) to be a simplicial realization of \(\Sigma_0 \times [0, 1]\) and  set \(\Tilde{\sigma} (y, t) \defeq (\sigma (y), t) \in \partial \m \times [0, 1]\). If \(\int_{\Sigma_0} \Hat{w} \compose \sigma < \infty\), then 
\(\int_{\Sigma_0 \times \{0, 1\}} \Tilde{w} \circ \Tilde \sigma< \infty\), and thus since \(\dim (\Sigma) = \floor{p} \le p\), the maps \(V\circ \Tilde{\sigma}\big\vert_{\Sigma_0 \times \{0\}}\) and \(V\circ \Tilde{\sigma}\big\vert_{\Sigma_0 \times \{1\}}\) are homotopic in \(\mathrm{VMO} (\Sigma_0, \n)\).
Moreover, in view of \ref{it_Chea0siechoa5uTieca2Xooc}, since \(\int_{\Sigma_0} w \circ \sigma< \infty\), the map \(u\circ\sigma\big \vert_{\Sigma_0} = V\big\vert_{\Sigma_0 \times \{0\}}\) is homotopic in \(\mathrm{VMO} (\Sigma_0, \n)\) to the restriction to $\Sigma_0$ of a map in \(C (\Sigma, \n)\). 
By transitivity of homotopies, \(V\big\vert_{\Sigma_0 \times \{1\}}\) is homotopic in \(\mathrm{VMO} (\Sigma_0, \n)\) to the restriction of a map in \(C (\Sigma, \n)\). By \cref{proposition_trace_very_regular}, \(V\big\vert_{\partial \m \times \{1\}} \in \dot{W}^{1, p} (\partial \m \times \{1\},\n)\) is the trace of a map in \(\dot{W}^{1, p} (\m, \n)\) and the conclusion follows.
\end{proof}

\bibliography{bib}%

\end{document}